\documentclass[11pt,reqno]{amsart}

\usepackage[paperwidth=8.26in,paperheight=11.69in,
left=1.125in, right=1.125in, bottom=1.25in]{geometry}
\usepackage[utf8]{inputenc}

\usepackage{amsmath,amssymb,amsthm}
\usepackage{enumerate,mathtools}
\usepackage{lscape}
\newtheorem{theorem}{Theorem}[section]
\theoremstyle{plain}
\newtheorem{corollary}[theorem]{Corollary}
\newtheorem{definition}[theorem]{Definition}
\newtheorem{proposition}[theorem]{Proposition}
\theoremstyle{remark}
\newtheorem{remark}{Remark}
\theoremstyle{example}
\newtheorem{example}[theorem]{Example}
\newtheorem{question}{Question}

\usepackage{cleveref}
\crefname{theorem}{Theorem}{Theorems}
\crefname{proposition}{Proposition}{Propositions}
\crefname{lemma}{Lemma}{Lemmas}
\crefname{corollary}{Corollary}{Corollaries}
\crefname{example}{Example}{Examples}
\crefname{definition}{Definition}{Definitions}
\crefname{remark}{Remark}{Remarks}
\crefname{question}{Question}{Questions}
\crefname{table}{Table}{Tables}
\crefname{enumi}{}{}
\crefname{enumii}{}{}
\crefname{enumiii}{}{}
\crefname{equation}{}{}

\numberwithin{equation}{section}

\newcommand{\C}{\mathbb{C}}

\DeclareMathOperator{\aut}{Aut}

\newcommand{\sff}{{\rm I\!I}}
\newcommand{\Wbar}{\overline{W}}
\newcommand{\Wba}{\overline{W}}
\newcommand{\Zbar}{\overline{Z}}

\newcommand{\kbar}{\bar{k}}

\newcommand{\zbar}{\bar{z}}

\newcommand{\wbar}{\bar{w}}
\newcommand{\ahlfors}{\mathcal{A}}
\newcommand{\grad}{\mathrm{grad}}

\newcommand{\Wm}{\mathcal{W}}
\DeclareMathOperator{\tf}{tf}
\DeclareMathOperator{\trace}{trace}
\DeclareMathOperator{\Sym}{Sym}

\newcommand{\abar}{\bar{\alpha}}
\newcommand{\bbar}{\bar{\beta}}

\title[The CR Ahlfors derivative and a new invariant for spherically equivalent]{The CR Ahlfors derivative and a new invariant for spherically equivalent CR maps}

\author{Bernhard Lamel}
\address{Texas A\&M University Qatar, Science Program, Education City, Doha, Qatar}
\email{bernhard.lamel@qatar.tamu.edu}

\author{Duong Ngoc Son}
\address{Fakultät für Mathematik, Universität Wien, Oskar-Morgenstern-Platz 1, 1090 Wien,\break Austria }
\email{son.duong@univie.ac.at}

\newcommand{\edit}[1]{{{#1}}}
\newcommand{\editb}[1]{{{#1}}}
\newcommand{\editc}[1]{{{#1}}}

\begin{document}

\date{October 30, 2020}
\subjclass[2000]{32V05, 32H35, 53A30}
\thanks{Duong Ngoc Son was supported by the Austrian Science Fund, FWF-Projekt M 2472-N35. Bernhard Lamel was supported by the Austrian Science Fund, FWF-Projekt I3472.}
\begin{abstract}
	We study a CR analogue of the Ahlfors derivative for conformal immersions of Stowe \cite{stowe2015ahlfors} that generalizes the CR Schwarzian derivative studied earlier by the second-named author \cite{son2018schwarzian}. This notion possesses several important properties similar to those of the conformal counterpart and provides a new invariant for \edit{spherically} equivalent CR maps from strictly pseudoconvex CR manifolds into a sphere. The invariant is computable and distinguishes many well-known sphere maps. In particular, it vanishes precisely when the map is spherically equivalent to the linear embedding of spheres.
\end{abstract}
\maketitle
\section{Introduction}
The main purpose of this paper is to extend the notion of \edit{the} CR Schwarzian derivative for CR diffeomorphisms \cite{son2018schwarzian} to the case of CR immersions.  For conformal immersions of Riemannian manifolds, the Ahlfors derivative of Stowe \cite{stowe2015ahlfors} generalizes the Schwarzian derivative of Osgood--Stowe \cite{osgood1992schwarzian} in a similar way and goes back to Ahlfors \cite{ahlfors1988cross}. Precisely, we shall define, for each CR immersion $f\colon (M,\theta) \to  (N,\eta)$ between pseudohermitian manifolds, a tensor denoted by $\ahlfors_{\theta}(f)$. This tensor reduces to the Schwarzian tensor introduced  in \cite{son2018schwarzian} for CR diffeomorphisms in the equidimensional case. We shall call this tensor the CR Ahlfors \edit{derivative (or tensor)}\editb{. We} refer the reader to Stowe's paper for further discussions regarding the history and motivations in the conformal case; however, it turns out that the CR setting has some special properties not present in the conformal setting, which we shall point out as we go.

The tensor which we are going to construct satisfies a ``chain rule'' described as follows: For a chain of CR immersions $(M,\theta) \xrightarrow{~F~} (N,\eta) \xrightarrow{~G~} (P,\zeta)$, it holds that
\begin{equation}\label{e:cr}
	\ahlfors(G\circ F)
	=
	\ahlfors(F) + F^{\ast} \ahlfors(G).
\end{equation} 
It was shown in \cite{son2018schwarzian} that if $(M,\theta)=(N,\eta)$ is the sphere with its standard pseudohermitian structure, then $\ahlfors(F)$ (which reduces to the CR Schwarzian derivative as already noted) vanishes identically if $F$ is a  CR automorphism of the sphere. Therefore, the chain rule \cref{e:cr} implies that $\ahlfors$ is an invariant for spherically equivalent CR maps into spheres of higher dimensions. This invariant property is a main motivation for us to extend the notion of \edit{the} CR Schwarzian derivative to the case of higher dimensional targets. We shall in fact apply the Ahlfors derivative to study equivalence of sphere maps, a problem which has been studied extensively; we can mention only several papers \cite{d1988proper,d1991polynomial,d2016homotopy} and refer the readers to numerous references therein.

To construct the CR Ahlfors derivative, we shall follow Stowe's construction for the conformal case. First, we define a notion of CR second fundamental form for the ``isopseudohermitian'' immersions and the $(1,0)$-mean curvature vector (this step was not needed in the equidimensional case). Precisely, let $(N,\eta)$ be \edit{a} pseudohermitian manifold and let $\iota \colon (M,\theta) \hookrightarrow (N, \eta)$ be a pseudohermitian submanifold of $N$. This means the standard inclusion $\iota$ is CR and $\theta = \iota^{\ast} \eta$.
We denote by  $\nabla$ and $\widetilde{\nabla}$  the Tanaka-Webster connections on $(M,\theta)$ and $(N, \eta)$, respectively, introduced by Tanaka \edit{and Webster} \cite{tanaka1975differential}. For any two vector fields $X, Y \in \Gamma(\C TM )$ extended to smooth sections $\widetilde{X}, \widetilde{Y}$ of $\C TN$, we define the pseudohermitian \textit{second fundamental form} by the Gauß formula, namely,
\begin{equation}\label{e:sffdef}
\sff(X, Y) = \sff_M^N(X, Y) 
:=
\widetilde{\nabla}_{\widetilde{X}} \widetilde{Y} - \nabla_XY,
\end{equation}
 This notion was \edit{previously} studied by many authors, see, e.g., \cite{webster1979rigidity} for the codimension one case and \cite{dragomir1995pseudohermitian,ebenfelt2004rigidity}  for the case of pseudohermitian immersions (i.e., when the Reeb field of $\eta$ is tangent to $\iota(M)$.) For our applications, we shall make no assumption on the Reeb field of the target.

Due to the presence of the torsion, $\sff$ is not \edit{necessarily} symmetric and thus we also consider the symmetrized second fundamental form, i.e.,
\begin{equation}
\Sym \sff (X,Y) = \tfrac{1}{2}\left(\sff(X,Y) + \sff(Y,X)\right).
\end{equation}
In most situations, we shall consider the second fundamental form $\sff$ as a tensor on the ``horizontal'' space $T^{0,1}M \oplus T^{1,0}M$ (the ``good directions'') where it behaves quite well. In particular, we define the $(1,0)$-mean curvature vector to be the trace of $\sff$ on the horizontal subspace:
\begin{equation}
	H :=
	\sum_{\alpha = 1}^{n} \sff (Z_{\abar}, Z_{\alpha})
\end{equation}
where $\{Z_{\alpha} \colon \alpha = 1,2,\dots ,n\}$ is an orthonormal frame of $T^{1,0}M$. The trace of $\Sym\sff$ is denoted by $\mu$, so that $\mu = \Re H$. Let us point out that the consideration here is similar to \cite{son2019semi} in which we consider the case of CR immersions into a Kähler manifold. Moreover, when the target is the standard sphere, the second fundamental form \eqref{e:sffdef} is closely related to the one for CR immersions into complex euclidean space.

Similar to \cite{stowe2015ahlfors}, we define the tensor $\nu = \nu_M^N$ as a symmetric real tensor on $T^{1,0}M \oplus T^{0,1}M$ via the formula
\begin{equation}
		\nu(X,Y) = 2\left\langle \Sym \sff (X,Y) , \mu \right\rangle - \langle X,Y\rangle |\mu|^2.
\end{equation}
Moreover, we define, for each smooth function $u$ on $M$,
\begin{align}
	\mathcal{H}_{\theta}(u)
	=
	\Sym \nabla \nabla u -\partial_b u \otimes \partial_b u - \bar{\partial}_b u \otimes \bar{\partial}_b u + \frac{1}{2}|\bar{\partial}_b u|^2 L_\theta.
\end{align}
\edit{Here, $L_{\theta}(Z,\Wba): = -i d\theta(Z,\Wba)$ ($Z,W \in T^{1,0}M$) is the Levi form.} We refer the reader to \cref{ss:changecontact}, in particular \eqref{e:dudecomp}, for the (standard) notation used here. 
We can now introduce the CR analogue of the Ahlfors derivative as follows.
\begin{definition}[cf. \cite{stowe2015ahlfors}]
	Let $(M^{2n+1},\theta)$ and $(N^{2d+1},\eta)$ be strictly pseudoconvex pseudohermitian manifolds and let $F\colon M\to N$ be a CR immersion.
	Let $u$ be the smooth function on $M$ such that $F^{\ast} \eta = e^{u} \theta$. We define the CR
	Ahlfors derivative \edit{(or CR Ahlfors tensor)} of $F$ to be
	\begin{equation}
	\ahlfors(F)
	:=
	\mathcal{H}_{\theta}(u) 
	+ F^{\ast}\left(\nu^{N}_{F(M)}\right)
	+\frac12 F^{\ast}(J_{\Theta} L_{\Theta})  - \frac12  J_{\theta} L_{\theta}.
	\end{equation}
	where $J_{\theta} = R_{\theta}/(n(n+1))$ and $J_{\Theta} = R_{\Theta}/(d(d+1))$ are the normalized Webster scalar curvatures on $M$ and $N$, respectively, and \edit{$L_{\Theta}$ and $L_{\theta}$ are the corresponding Levi forms.}
\end{definition}
As mentioned above, the Ahlfors tensor $\ahlfors$ generalizes  the CR Schwarzian tensor for CR diffeomorphisms in \cite{son2018schwarzian} in the same spirit the conformal Ahlfors generalizes the Schwarzian of Osgood--Stowe. We shall explain this in the next section.

As briefly discussed, our first result of the paper is the following chain rule.
\begin{theorem}\label{thm:crintro}
	For CR immersions $F\colon (M,\theta) \to (N,\eta)$ and $G\colon (N,\eta) \to (P,\zeta)$, we have
	\begin{equation}\label{e:chainrule}
	\ahlfors(G\circ F)
	=
	\ahlfors(F)
	+
	F^{\ast}\ahlfors (G).
	\end{equation}
\end{theorem}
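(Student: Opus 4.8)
The plan is to reduce the chain rule to a single tensor identity relating the second fundamental forms of the three immersions, and then to verify that identity by a frame computation. First I would record how the conformal factors compose. Writing $F^{\ast}\eta = e^{u}\theta$ and $G^{\ast}\zeta = e^{v}\eta$, one has $(G\circ F)^{\ast}\zeta = F^{\ast}(e^{v}\eta) = e^{u + F^{\ast}v}\theta$, so the conformal factor of the composite is $w := u + F^{\ast}v$. Substituting the three factors $u$, $v$, $w$ into the definition of $\ahlfors$ and using $(G\circ F)^{\ast} = F^{\ast}G^{\ast}$, the Webster scalar curvature contributions cancel automatically: the terms $\pm\tfrac12 F^{\ast}(J_{\eta}L_{\eta})$ coming from $\ahlfors(F)$ and from $F^{\ast}\ahlfors(G)$ cancel, while $\tfrac12 F^{\ast}G^{\ast}(J_{\zeta}L_{\zeta}) - \tfrac12 J_{\theta}L_{\theta}$ is common to both sides. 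Thus the theorem reduces to the identity
\begin{equation}
\mathcal{H}_{\theta}(w) + (G\circ F)^{\ast}\bigl(\nu^{P}_{(G\circ F)(M)}\bigr) = \mathcal{H}_{\theta}(u) + F^{\ast}\mathcal{H}_{\eta}(v) + F^{\ast}\bigl(\nu^{N}_{F(M)}\bigr) + F^{\ast}G^{\ast}\bigl(\nu^{P}_{G(N)}\bigr).
\end{equation}

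Next I would analyze the left-hand Hessian term. Since the Tanaka--Webster Hessian is linear in its argument while the remaining terms of $\mathcal{H}_{\theta}$ are quadratic in the first derivatives, I expect a cocycle identity of the form $\mathcal{H}_{\theta}(u + \phi) = \mathcal{H}_{\theta}(u) + \mathcal{H}_{e^{u}\theta}(\phi)$, which should already be available from the change-of-contact-form computations of \cref{ss:changecontact}; applying it with $\phi = F^{\ast}v$ gives $\mathcal{H}_{\theta}(w) = \mathcal{H}_{\theta}(u) + \mathcal{H}_{e^{u}\theta}(F^{\ast}v)$. Now $F\colon (M, e^{u}\theta)\to (N,\eta)$ is isopseudohermitian because $F^{\ast}\eta = e^{u}\theta$, so the horizontal projections and the induced Levi form are $F$-related, and the only discrepancy between $\mathcal{H}_{e^{u}\theta}(F^{\ast}v)$ and $F^{\ast}\mathcal{H}_{\eta}(v)$ comes from the Hessian. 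The Gauß formula \eqref{e:sffdef} supplies it in the form $\Sym\nabla\nabla(F^{\ast}v) = F^{\ast}(\Sym\nabla\nabla v) + dv\bigl(\Sym\sff^{N}_{F(M)}\bigr)$, so that $\mathcal{H}_{\theta}(w) = \mathcal{H}_{\theta}(u) + F^{\ast}\mathcal{H}_{\eta}(v) + \Phi$, where $\Phi(X,Y) := dv\bigl(\Sym\sff^{N}_{F(M)}(X,Y)\bigr)$ is a symmetric second-fundamental-form correction. The reduced identity then becomes the assertion that $\Phi = F^{\ast}\nu^{N}_{F(M)} + F^{\ast}G^{\ast}\nu^{P}_{G(N)} - (G\circ F)^{\ast}\nu^{P}_{(G\circ F)(M)}$.

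For the right-hand $\nu$ terms I would use two facts. First, second fundamental forms are additive along the nested immersions: for horizontal $X,Y$ on $M$, telescoping the three Tanaka--Webster connections $\nabla^{M},\nabla^{N},\nabla^{P}$ yields $\sff^{P}_{(G\circ F)(M)}(X,Y) = \sff^{N}_{F(M)}(X,Y) + \sff^{P}_{G(N)}(F_{\ast}X, F_{\ast}Y)$. Second, each mean curvature is a trace over a Levi-orthonormal horizontal frame, and these frames rescale by $e^{-w/2}$, $e^{-u/2}$, $e^{-v/2}$ under the respective conformal factors; tracking these factors expresses $\mu^{P}_{(G\circ F)(M)}$ through $\mu^{N}_{F(M)}$ and the partial trace of $\sff^{P}_{G(N)}$ over $F_{\ast}(T^{1,0}M)$. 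Expanding $\nu(X,Y) = 2\langle \Sym\sff(X,Y),\mu\rangle - \langle X,Y\rangle|\mu|^{2}$ for each of the three immersions then produces the cross terms $\langle \Sym\sff^{N}_{F(M)},\, \mu^{P}_{G(N)}\rangle$ and $\langle \Sym\sff^{P}_{G(N)},\, \mu^{N}_{F(M)}\rangle$, which should assemble into $\Phi$.

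The main obstacle is exactly this last matching, and it is where the CR features enter. Because the Reeb field of the target is not assumed tangent, $\sff$ is not symmetric and its values carry a transverse (Reeb-direction) component in addition to the normal one; controlling $\Phi = dv\bigl(\Sym\sff^{N}_{F(M)}\bigr)$ therefore requires identifying the transverse component of the mean curvature vector with the differential of the conformal factor --- precisely the data encoded in the quadratic terms $-\partial_{b}u\otimes\partial_{b}u - \bar{\partial}_{b}u\otimes\bar{\partial}_{b}u + \tfrac12|\bar{\partial}_{b}u|^{2}L_{\theta}$ of $\mathcal{H}_{\theta}$. I expect the decisive step to be a careful frame computation showing that the $dv$-pairing with $\Sym\sff^{N}_{F(M)}$, together with the $e^{-w}$ versus $e^{-u},e^{-v}$ discrepancy in the partial mean-curvature trace, reproduces $F^{\ast}\nu^{N}_{F(M)} + F^{\ast}G^{\ast}\nu^{P}_{G(N)} - (G\circ F)^{\ast}\nu^{P}_{(G\circ F)(M)}$ precisely.
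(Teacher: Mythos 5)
Your overall architecture mirrors the paper's proof: reduce to $\ahlfors'(F) = \mathcal{H}_{\theta}(u) + F^{\ast}\nu^{N}_{F(M)}$ by cancelling the curvature terms, split $\mathcal{H}_{\theta}(u + v\circ F)$ via the cocycle identity (\cref{prop32}), compare intrinsic and ambient Hessians through the Gauß formula, and finish with additivity of $\nu$ along nested submanifolds (\cref{prop:e}). However, there is a genuine error at the comparison step. You claim that, because $F\colon (M,e^{u}\theta)\to(N,\eta)$ is isopseudohermitian, ``the only discrepancy between $\mathcal{H}_{e^{u}\theta}(F^{\ast}v)$ and $F^{\ast}\mathcal{H}_{\eta}(v)$ comes from the Hessian.'' This is false: the quadratic term $\tfrac12|\bar{\partial}_b v|^2 L$ differs as well, because the intrinsic horizontal gradient of the restriction $v|_{F(M)}$ is only the tangential part of the ambient one, so that $|\bar{\partial}_b v|^2_{\eta} = |\bar{\partial}_b (v|_{F(M)})|^2 + |(\grad^{1,0}_{N,F(M)} v)^{\perp}|^2$ along $F(M)$. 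The correct comparison (the unlabeled proposition in Section 3 of the paper) is $\mathcal{H}_{\theta}(v|_M) - \mathcal{H}_{\eta}(v) = dv\bigl(\Sym\sff\bigr) - \tfrac12 |(\grad^{1,0}_{N,M}v)^{\perp}|^2 L_{\theta}$, so your $\Phi$ is missing the gradient-norm term.

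This omission is fatal to the final matching that you defer to a ``careful frame computation.'' By \cref{prop:e} applied to the tower $(G\circ F)(M)\subset G(N)\subset (P,\zeta)$, all carrying structures induced from $\zeta$, together with the conformal-change formula for $\nu$ (the first proposition of Section 3, itself a consequence of \cref{prop:conch}), the right-hand side of your reduced identity equals $F^{\ast}\bigl(2\langle \Sym\sff, \xi\rangle - |\xi|^2 L\bigr)$, where $\xi = \Re(\grad^{1,0}_{N}v)^{\perp}$ and $|\xi|^2 = \tfrac12|(\grad^{1,0}_{N}v)^{\perp}|^2$; meanwhile your $\Phi = dv(\Sym\sff)$ equals $F^{\ast}\bigl(2\langle \Sym\sff, \xi\rangle\bigr)$, since $\Sym\sff$ on horizontal vectors takes values in the real horizontal normal bundle. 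The two sides therefore differ by exactly $F^{\ast}(|\xi|^2 L)$, so the identity you propose to verify is false whenever $v$ has nonvanishing normal gradient along $F(M)$. A related imprecision occurs in your telescoping claim $\sff^{P}_{(G\circ F)(M)} = \sff^{N}_{F(M)} + \sff^{P}_{G(N)}$: telescoping of Tanaka--Webster connections is only valid for nested structures induced from $\zeta$, so the first summand must be the second fundamental form of $F(M)$ inside $(N,e^{v}\eta)$, whose mixed-type components differ from those of $\sff^{N}_{F(M)}$ by $-\langle\cdot,\cdot\rangle\,(\grad^{1,0}_{N,F(M)}v)^{\perp}$ by \cref{prop:conch}. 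In the paper's proof these two corrections --- the gradient-norm term in the Hessian comparison and the conformal-change correction to $\nu$ --- cancel against each other; you have dropped one but implicitly kept the other, which breaks the cancellation. Restoring the term $-\tfrac12|(\grad^{1,0}_{N,F(M)}v)^{\perp}|^2 L_{\theta}$ in $\Phi$ and organizing the $\nu$-bookkeeping in the two stages the paper isolates (first a conformal change of the ambient structure, then a genuine tower with induced structures) repairs the argument.
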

We point out that although this theorem is analogous to Theorem 1 in \cite{stowe2015ahlfors}, the chain rule in CR case is in fact simpler than its conformal counterpart: the excess term $\epsilon$ does not appear in \edit{the} CR case.

Specializing this chain rule to the case of CR maps into \edit{the sphere}, we obtain a new tensorial invariant for spherically equivalent classes of such maps. This invariant property is a consequence of the fact that the Schwarzian tensor of a CR automorphism of the sphere with \edit{its} standard pseudohermitian structure vanishes identically \cite{son2018schwarzian}. Recall that two CR maps $F$ and $G$ from $M$ into $\mathbb{S}^{2N'+1}$ are said to be (left) spherically equivalent if there exists a CR automorphism $\phi$ of $\mathbb{S}^{2N' +1}$ such that $G = \phi \circ F$. When $M=\mathbb{S}^{2N+1}$ is also a sphere, we can use the CR automorphisms of $M$ to define a weaker version of spherical equivalence. Namely, we say that $F$ and $G$ are spherically equivalent if there exist CR automorphisms $\gamma$ (of $\mathbb{S}^{2N+1}$) and $\phi$ (of $\mathbb{S}^{2N'+1})$ such that  $G\circ \gamma = \phi \circ F$. 

In the following, the unit spheres are always equipped with their standard pseudohermitian structures.
\begin{corollary}\label{cor:se}
	Let $(M,\theta)$ be a strictly pseudoconvex pseudohermitian manifold.
	\begin{enumerate}[(i)]
		\item Suppose that $F\colon M \to \mathbb{S}^{2N+1}$ is a CR immersion and $\phi \colon \mathbb{S}^{2N+1} \to \mathbb{S}^{2N'+1}$ ($N' \geq  N$) is a totally geodesic embedding, then
		\begin{equation}\label{e:invariant}
		\mathcal{A}(F) = \mathcal{A}(\phi \circ F).
		\end{equation}
		In particular, if $F$ and $G$ are left spherical equivalent CR maps from $M$ into $\mathbb{S}^{2N+1}$, then 
		\begin{equation} 
			\ahlfors(F) = \ahlfors(G).
		\end{equation} 
		\item Suppose that $G \colon (\mathbb{S}^{2n+1}, \Theta) \to (M,\theta)$ is a CR immersion and $\gamma \colon N \to \mathbb{S}^{2n+1}$ is a totally geodesic embedding, then
		\begin{equation}\label{e:invariant2}
		\gamma^{\ast}\ahlfors(G) = \ahlfors(G \circ \gamma).
		\end{equation}
	\end{enumerate}
\end{corollary}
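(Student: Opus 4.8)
The plan is to deduce both statements directly from the chain rule (\cref{thm:crintro}), reducing each to the vanishing of the Ahlfors tensor of the auxiliary map. The only genuine content beyond the chain rule is a vanishing lemma, which I would establish first: \emph{the Ahlfors tensor of a totally geodesic embedding $\phi$ between standard spheres vanishes identically}. To prove this I would inspect the four terms in the definition of $\ahlfors(\phi)$ one by one. A totally geodesic embedding is in particular isometric for the standard pseudohermitian structures, so $\phi^{\ast}\Theta = \theta$; hence the conformal factor is $u\equiv 0$ and $\mathcal{H}_{\theta}(u) = 0$. The phrase ``totally geodesic'' means the second fundamental form $\sff$ vanishes, so the mean curvature vector $\mu = 0$ and therefore $\nu^{N}_{\phi(M)} \equiv 0$, killing the term $\phi^{\ast}(\nu)$. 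Finally, since $\phi^{\ast}\Theta = \theta$ we automatically get $\phi^{\ast}L_{\Theta} = L_{\theta}$ (because $L_{\theta}(Z,\Wba) = -i\,d\theta(Z,\Wba) = -i\,\phi^{\ast}(d\Theta)(Z,\Wba) = (\phi^{\ast}L_{\Theta})(Z,\Wba)$), and for both standard spheres the normalized Webster scalar curvature equals the same constant, $J_{\Theta}\circ\phi = J_{\theta} = 1$; hence $\tfrac12\phi^{\ast}(J_{\Theta}L_{\Theta}) - \tfrac12 J_{\theta}L_{\theta} = 0$. Thus $\ahlfors(\phi) = 0$.

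With this lemma in hand, part (i) is immediate. Applying the chain rule to the composition $\phi\circ F$ gives $\ahlfors(\phi\circ F) = \ahlfors(F) + F^{\ast}\ahlfors(\phi) = \ahlfors(F)$, which is \eqref{e:invariant}. For the ``in particular'' assertion, left spherical equivalence of $F$ and $G$ means $G = \psi\circ F$ for a CR automorphism $\psi$ of $\mathbb{S}^{2N+1}$. Since the Ahlfors tensor reduces to the CR Schwarzian in the equidimensional case, and the latter vanishes for automorphisms of the standard sphere \cite{son2018schwarzian}, we have $\ahlfors(\psi) = 0$; the chain rule then yields $\ahlfors(G) = \ahlfors(F) + F^{\ast}\ahlfors(\psi) = \ahlfors(F)$.

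Part (ii) is the mirror image, now using the second factor of the chain rule. Applying \cref{thm:crintro} to $G\circ\gamma$ gives $\ahlfors(G\circ\gamma) = \ahlfors(\gamma) + \gamma^{\ast}\ahlfors(G)$, and since $\gamma$ is a totally geodesic embedding its Ahlfors tensor vanishes by the lemma above, leaving $\ahlfors(G\circ\gamma) = \gamma^{\ast}\ahlfors(G)$, which is \eqref{e:invariant2}.

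The main obstacle is entirely contained in the vanishing lemma for totally geodesic embeddings; within it, the delicate point is the cancellation of the two Webster scalar curvature terms. This hinges on pinning down the normalization conventions for the standard pseudohermitian structures on the two spheres, so as to confirm $J_{\Theta} = J_{\theta}$; once the isometry $\phi^{\ast}\Theta = \theta$ is used, the matching of Levi forms is automatic, and the remaining assertions ($u\equiv 0$, $\sff \equiv 0 \Rightarrow \mu = 0 \Rightarrow \nu \equiv 0$) follow directly from the definitions of $\sff$, $\mu$, and $\nu$.
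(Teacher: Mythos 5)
Your overall strategy---reducing both parts, via the chain rule of \cref{thm:crintro}, to the vanishing of the Ahlfors tensor of the auxiliary map, and handling the ``in particular'' clause by the vanishing of the CR Schwarzian for sphere automorphisms \cite{son2018schwarzian}---is exactly the argument the paper intends. The genuine gap is the first step of your vanishing lemma: a totally geodesic embedding between spheres is \emph{not} in general isopseudohermitian, so you may not assume $\phi^{\ast}\Theta = \theta$. ``Totally geodesic'' here is the CR notion, i.e.\ vanishing of the CR second fundamental form $\sff(Z,W)$ for $(1,0)$-vectors, which by \cref{e:crsff} is independent of the choice of contact forms and is therefore preserved under post-composition with an arbitrary CR automorphism $\psi$ of the target sphere. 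Thus $\phi = \psi\circ L$ ($L$ the linear embedding, $\psi$ non-unitary) is totally geodesic, yet $\phi^{\ast}\Theta = e^{u}\theta$ with $u$ a nonconstant CR-pluriharmonic function (cf.\ \cref{prop:crph}); note also that the ``in particular'' clause of (i) needs $\phi$ to be allowed to be an automorphism ($N'=N$), and those are never isopseudohermitian unless unitary. For such $\phi$ \emph{none} of your term-by-term vanishing claims holds: $\mathcal{H}_{\theta}(u)\neq 0$; only the $(1,0)$--$(1,0)$ part of $\sff$ vanishes, while the mixed part equals $\langle Z,\Wbar\rangle\overline{H}$ by \cref{e:s2} with $H\neq 0$ (the Reeb field of the target is not tangent to $\psi(L(\mathbb{S}^{2N+1}))$), so $\mu\neq 0$ and $\nu\neq 0$; and $\phi^{\ast}(J_{\Theta}L_{\Theta}) = e^{u}J_{\Theta}L_{\theta}\neq J_{\theta}L_{\theta}$. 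The vanishing of $\ahlfors(\phi)$ results from cancellations among these nonzero terms, not from each term being zero. The clean repair is to invoke rigidity \cite{ebenfelt2004rigidity,ji2010flatness,son2019semi}: a totally geodesic embedding between spheres factors as $\phi = \psi\circ L$ with $\psi\in\aut(\mathbb{S}^{2N'+1})$, and then the chain rule gives $\ahlfors(\phi) = \ahlfors(L) + L^{\ast}\ahlfors(\psi)$, where your term-by-term computation is valid for $L$ (which does satisfy $L^{\ast}\Theta' = \Theta$) and $\ahlfors(\psi)=0$ by \cite{son2018schwarzian}.

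A second, smaller gap occurs in part (ii): you apply the sphere-to-sphere lemma to $\gamma\colon N \to \mathbb{S}^{2n+1}$, but $N$ is an abstract strictly pseudoconvex manifold, so the identity ``$J_{\Theta}\circ\gamma = J_{\theta_N}$ because both are standard spheres'' is not available. Even in the isopseudohermitian, fully geodesic situation ($\gamma^{\ast}\Theta = \theta_N$, $\sff\equiv 0$, $H=0$), the cancellation of $\tfrac12\gamma^{\ast}(J_{\Theta}L_{\Theta}) - \tfrac12 J_{\theta_N}L_{\theta_N}$ requires the Gauß equation (\cref{prop:gauss}): with $\sff=0$ and $H=0$ the Tanaka--Webster curvature of $(N,\gamma^{\ast}\Theta)$ is the restriction of the sphere's constant curvature tensor, and tracing over the $n$ source directions yields $J_{\theta_N} = J_{\Theta}\circ\gamma$. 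Alternatively one can again factor $\gamma$ through the linear embedding by rigidity. Either way, an actual argument is needed at the point where you wrote ``by the lemma above.''
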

\begin{remark}
	In Part (ii), if $N$ admits a totally geodesic embedding into a sphere, then it is \edit{necessarily} CR spherical. This follows from \cite{ebenfelt2004rigidity} for the case $\dim_{\mathbb{R}} N \geq 5$ and \cite{son2019semi} for the case $\dim_{\mathbb{R}} N = 3$.
\end{remark}

In view of \cref{cor:se}, an interesting question that arises is whether the Ahlfors derivative distinguishes the spherical equivalent classes of sphere maps. Although we can check that this is the case for maps between \edit{spheres} of ``low'' codimension, we do not know the answer to this question in the full generality. However, we prove that the CR Ahlfors distinguishes the totally geodesic CR map: an arbitrary CR map into the sphere with vanishing \edit{CR Ahlfors derivative} must be a totally geodesic embedding.

\begin{theorem}
	Let $F\colon (M,\theta) \to (\mathbb{S}^{2d+1}, \Theta)$ be a CR immersion. If $\ahlfors(F) = 0$, then $M$ is CR spherical and $F$ is spherically equivalent to the linear mapping.
\end{theorem}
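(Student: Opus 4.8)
The plan is to establish the two implications separately, the nontrivial one being the converse. That a map spherically equivalent to the linear embedding satisfies $\ahlfors(F)=0$ follows from \cref{cor:se}: applying part (i) with $F$ the identity map yields $\ahlfors(\iota)=0$ for the totally geodesic linear embedding $\iota$ of subspheres, and since the Schwarzian of a sphere automorphism vanishes \cite{son2018schwarzian}, the chain rule \cref{thm:crintro} shows that pre- and post-composition by CR automorphisms of spheres leaves $\ahlfors$ unchanged. So I concentrate on showing that $\ahlfors(F)=0$ forces spherical equivalence to the linear map.

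Following the conformal template of \cite{stowe2015ahlfors}, I would extract two geometric consequences from the tensor equation $\ahlfors(F)=0$. First, the conformally invariant (trace-free, Weyl-type) content of the equation should force the Chern--Moser tensor of $(M,\theta)$ to vanish when $n\ge 2$, so that $M$ is CR spherical; the case $n=1$ has no Chern--Moser tensor and would be handled separately via the Cartan tensor as in \cite{son2019semi}. Here the interpretation of $\mathcal{H}_{\theta}(u)-\tfrac12 J_{\theta}L_{\theta}$ as the change of the Webster--Schouten data under $\theta\mapsto F^{\ast}\Theta=e^{u}\theta$, combined with the Gauß equation for $F(M)\subset\mathbb{S}^{2d+1}$ and the CR flatness of the ambient sphere, is what does the work: $\ahlfors(F)=0$ becomes a Gauß-type identity comparing the pseudoconformal curvature of $(M,e^{u}\theta)$ to that of the flat model. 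Second, the remaining components should force $F$ to be umbilic, i.e. the trace-free part of $\sff$ to vanish.

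Granting sphericity and umbilicity, I would finish as follows. By \cref{cor:se}(ii) the hypothesis $\ahlfors(F)=0$ is preserved under pre-composition by a CR diffeomorphism, so once $M$ is CR spherical I may identify it with $\mathbb{S}^{2n+1}$ and assume the source is the standard sphere. An umbilic CR immersion of the sphere into the sphere lies, up to a CR automorphism of the target, in a linearly embedded subsphere $\Sigma\cong\mathbb{S}^{2n+1}$ (the rigidity of umbilic/totally geodesic CR submanifolds, cf. the Remark after \cref{cor:se} and \cite{ebenfelt2004rigidity,son2019semi}). Writing the normalized map as $M\xrightarrow{F_0}\Sigma\xrightarrow{\iota}\mathbb{S}^{2d+1}$ with $\iota$ totally geodesic, the chain rule \cref{thm:crintro} together with $\ahlfors(\iota)=0$ gives $\ahlfors(F_0)=0$; since $F_0$ is equidimensional, $\ahlfors(F_0)$ is exactly the CR Schwarzian tensor, and its vanishing yields by \cite{son2018schwarzian} that $F_0$ is a spherical equivalence. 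Unwinding the normalizations shows that $F$ is spherically equivalent to the linear mapping.

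The main obstacle is the extraction in the second paragraph, in particular the passage to umbilicity. The difficulty is that $\ahlfors$ records the second fundamental form only in the averaged tensor $\nu$, through the mean curvature $\mu=\Re H$, so the full trace-free part of $\sff$ is not visible term-by-term in the definition. To recover umbilicity I expect to need the entire tensorial content of $\ahlfors(F)=0$---its $(2,0)$, trace-free $(1,1)$, and pure-trace parts---fed into the Gauß and Codazzi equations for the immersion, using the now-established flatness of both $M$ and the ambient sphere to close the system. A secondary technical point is that a preliminary normalization of the conformal factor $u$ by automorphisms (permissible, since these preserve $\ahlfors=0$) may be needed before these cancellations become transparent, and, as noted, the three-dimensional source requires the separate Cartan-tensor treatment of \cite{son2019semi}.
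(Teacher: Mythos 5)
Your proposal stalls exactly where you admit it does, and that gap is the whole theorem: you never establish umbilicity, and the route you sketch toward sphericity is not the one that works. The paper's proof rests on a single computation that your plan is missing, namely part (iii) of the Proposition in Section 4: when the target is the sphere, contracting the Gauß equation (\cref{prop:gauss}) and combining it with Lee's conformal-change formula for the Webster scalar curvature yields the pointwise identity
\begin{equation*}
	\trace_{\theta}\ahlfors(F)
	=
	h^{\alpha\bbar}\,\ahlfors_{\alpha\bbar}(F)
	=
	\tfrac{1}{2(n+1)}\, e^{u}\, |\sff^{2,0}|^{2}\;\geq\;0.
\end{equation*}
This is precisely the special CR feature (absent in Stowe's conformal setting \cite{stowe2015ahlfors}) that makes the theorem short: $\ahlfors(F)=0$ forces the trace to vanish, hence the CR second fundamental form vanishes identically, so umbilicity comes for free from the \emph{pure-trace} part alone --- no Gauß--Codazzi system involving the $(2,0)$ and trace-free $(1,1)$ parts is needed. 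Those other parts only record the change of pseudohermitian torsion and of the trace-free Webster Ricci under $\theta\mapsto e^{u}\theta$; since the Chern--Moser tensor is a CR invariant and is blind to the conformal factor $u$, their vanishing says nothing directly about it, so your claim that the ``Weyl-type content'' of the equation forces sphericity for $n\geq 2$ is unsubstantiated.

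Your plan also has the logical order reversed: in the paper, sphericity is a \emph{consequence} of umbilicity, not an independent first step. Once $\sff^{2,0}\equiv 0$, the known rigidity results finish the proof immediately: $M$ is CR spherical by \cite{ebenfelt2004rigidity} for $n\geq 2$ and by \cite{son2019semi} for $n=1$, and $F$ is spherically equivalent to the linear embedding by \cite{ji2010flatness} when $\dim M\geq 5$ and by \cite{son2019semi} in dimension three. Your final paragraph (normalize, factor through a linearly embedded subsphere, and invoke the equidimensional Schwarzian theory of \cite{son2018schwarzian}) is a workable, if more roundabout, substitute for this citation step --- but it only becomes available after umbilicity is in hand, which your proposal does not achieve.
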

It is \edit{also} natural to ask under which conditions the Ahlfors derivative is a nonzero functional multiple of the Levi metric. We shall discuss this question after analyzing several examples \edit{in} the last section; see \cref{q:1}.

The paper is organized as follows. In section 2, we study the geometry of the CR second fundamental form for CR immersions. In section 3, we prove \cref{thm:crintro}. In section 4, we study the maps with vanishing CR Ahlfors derivatives. We study the case when the source is of \edit{dimension three} in section 5. In section 6, we provide an explicit \edit{formula} for the Ahlfors derivative which is used to analyze various examples in section 7.

\noindent{\bf Acknowledgment.} The authors would like to thank an anonymous referee for very careful reading of the manuscript and pointing out many, many typographical errors that we were not aware of. 
 
\section{Immersions of CR manifolds and the second fundamental form}
\subsection{The second fundamental form}
Let $\iota \colon (M,\theta) \hookrightarrow (N, \eta)$ be a pseudohermitian submanifold, i.e., \edit{$\iota$ is CR} and $\theta = \iota^{\ast} \eta$, where $\iota$ is the inclusion. In this case, $\iota$ is ``isopseudohermitian'' in the sense of \cite{dragomir1995pseudohermitian}. This notion is more general than that of ``pseudohermitian immersions,'' as the latter requires that the Reeb field of $\eta$ is tangent to $M$ \edit{along $M$}. In the latter case, \edit{the pair} $(\theta, \eta)$ is admissible in the sense of \cite{ebenfelt2004rigidity}.

For any two vector fields $X, Y \in \Gamma(\C TM )$ extended to smooth sections $\widetilde{X}, \widetilde{Y}$ of $\C TN$, we recall that  the 
\textit{second fundamental form} is defined by
	$\sff(X, Y) =
	\widetilde{\nabla}_{\widetilde{X}} \widetilde{Y} - \nabla_XY$ (see \eqref{e:sffdef})
where $\widetilde{\nabla}$ and $\nabla$ is the Tanaka-Webster connection on $(N ,\eta)$ and $(M , \theta)$, respectively. We summarize the basic properties of $\sff$ as follows (cf. \cite{son2019semi} which treats a similar situation), 
where $T$ and $\widetilde{T} $ denotes the Reeb field of 
$(M,\theta)$ and $(N,\eta)$, respectively.
\begin{proposition}
The second fundamental form $\sff$ is well-defined, tensorial, and satisfies the following properties
for all $(1,0)$-vectors $Z$ and $W$:
\begin{align}
	\sff(\Zbar, \Wbar) & = \overline{\sff(Z,W)}, \label{e:ssf1}\\
	\sff(Z,\Wbar) & = \overline{\sff(\Zbar, W)},\label{e:ssf2}\\
	\sff(Z,W) & = \sff(W,Z), \label{e:ssf3} \\
	\sff(Z,\Wbar ) & = \sff(\Wbar , Z) - i \langle Z, \Wbar \rangle_{\theta} (T - \widetilde{T}), \label{e:ssf4} \\
	\sff(Z, T) & = \widetilde{\nabla}_Z (T - \widetilde{T}), \label{e:ssf5} \\
	\sff(T,Z) & = \widetilde{\nabla}_{T - \widetilde{T}} Z + [Z , T - \widetilde{T}] + \widetilde{\tau} Z - \tau Z. \label{e:ssf6}
\end{align}
Here $ \widetilde{\tau} Z : = \mathbb{T}_{\widetilde{\nabla}}(\widetilde{T}, Z)$ is the pseudohermitian torsion of $\widetilde{\nabla}$ and similarly for $\tau$. Moreover, $\sff$ is symmetric if and only if $\iota$ is pseudohermitian (i.e. $\widetilde{T} = \iota_{\ast} T$).
\end{proposition}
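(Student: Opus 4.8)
The plan is to reduce every assertion to a few structural features of the Tanaka--Webster connection that hold on both $M$ and $N$: it parallelizes the Reeb field and the Levi metric (so $\nabla T = 0$ and $\widetilde{\nabla}\widetilde{T} = 0$), it is a real connection ($\overline{\nabla_X Y} = \nabla_{\overline X}\overline Y$), and its torsion is completely prescribed, namely on $M$ one has $\mathbb{T}_\nabla(Z,W) = 0$, $\mathbb{T}_\nabla(Z,\Wbar) = i\langle Z,\Wbar\rangle_\theta\, T$, and $\mathbb{T}_\nabla(T,Z) = \tau Z$, with the analogous identities for $\widetilde{\nabla}$ on $N$. Two elementary remarks about the inclusion will be used throughout: since $\iota$ is CR, every $Z \in \Gamma(T^{1,0}M)$ extends to some $\widetilde{Z} \in \Gamma(T^{1,0}N)$, and for fields $X,Y$ tangent to $M$ along $M$ their ambient bracket restricts correctly, $[\widetilde{X},\widetilde{Y}]|_M = [X,Y]$.

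I would first settle well-definedness and tensoriality, which is exactly where taking the \emph{difference} of the two connections is essential. The term $\nabla_X Y$ uses only data on $M$; the term $\widetilde{\nabla}_{\widetilde{X}}\widetilde{Y}$ is tensorial in $\widetilde{X}$, hence depends only on $\widetilde{X}_p = X_p$, and being a covariant derivative in a direction tangent to $M$ it depends on $\widetilde{Y}$ only through $\widetilde{Y}|_M = Y$; so $\sff(X,Y)_p$ is independent of the chosen extensions. For tensoriality in $Y$, replacing $Y$ by $gY$ produces derivative-of-function terms $(Xg)\widetilde{Y}$ and $(Xg)Y$ in $\widetilde{\nabla}_{\widetilde{X}}(\widetilde{g}\widetilde{Y})$ and $\nabla_X(gY)$ respectively; since $\widetilde{X}\widetilde{g}|_M = Xg$ and $\widetilde{Y}|_M = Y$, these agree on $M$ and cancel in the difference, leaving $\sff(X,gY) = g\,\sff(X,Y)$. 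Tensoriality in $X$ is immediate from linearity of connections in the direction slot.

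The reality relations \eqref{e:ssf1} and \eqref{e:ssf2} then follow at once by applying conjugation to the defining formula. For \eqref{e:ssf3} and \eqref{e:ssf4} I would subtract the two orderings and convert brackets into torsion: $\widetilde{\nabla}_{\widetilde{X}}\widetilde{Y} - \widetilde{\nabla}_{\widetilde{Y}}\widetilde{X} = [\widetilde{X},\widetilde{Y}] + \mathbb{T}_{\widetilde{\nabla}}(\widetilde{X},\widetilde{Y})$, and similarly on $M$. Restricting to $M$ the two Lie-bracket terms coincide, so $\sff(X,Y) - \sff(Y,X) = \mathbb{T}_{\widetilde{\nabla}}(\widetilde{X},\widetilde{Y})|_M - \mathbb{T}_\nabla(X,Y)$. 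For $X = Z,\ Y = W$ both torsions vanish, giving \eqref{e:ssf3}; for $X = Z,\ Y = \Wbar$ the torsions are $i\langle Z,\Wbar\rangle_\eta\widetilde{T}$ and $i\langle Z,\Wbar\rangle_\theta T$, and since $\theta = \iota^{\ast}\eta$ forces the two Levi forms to agree on $T^{1,0}M$, the difference collapses to $-i\langle Z,\Wbar\rangle_\theta(T - \widetilde{T})$, which is \eqref{e:ssf4}. For \eqref{e:ssf5} I would use $\nabla_Z T = 0$ and $\widetilde{\nabla}_Z\widetilde{T} = 0$: writing $T$ also for an extension of the Reeb field of $M$, $\sff(Z,T) = \widetilde{\nabla}_Z T = \widetilde{\nabla}_Z(T - \widetilde{T})$. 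For \eqref{e:ssf6} I would write $\nabla_T Z = [T,Z] + \tau Z$, split $T = \widetilde{T} + (T - \widetilde{T})$ in $\widetilde{\nabla}_T Z = \widetilde{\nabla}_{\widetilde{T}}Z + \widetilde{\nabla}_{T-\widetilde{T}}Z = [\widetilde{T},Z] + \widetilde{\tau}Z + \widetilde{\nabla}_{T-\widetilde{T}}Z$, and use $[\widetilde{T},Z] - [T,Z] = [Z, T - \widetilde{T}]$ to arrive at the stated expression.

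Finally, for the \emph{symmetry} criterion I would note that on the horizontal space $T^{1,0}M\oplus T^{0,1}M$ the antisymmetric part of $\sff$ is governed entirely by \eqref{e:ssf3} (which makes it symmetric on $(1,0)\otimes(1,0)$) and \eqref{e:ssf4}, whose correction term $-i\langle Z,\Wbar\rangle_\theta(T-\widetilde{T})$ vanishes for all $Z,W$ precisely when $T = \widetilde{T}$ along $M$, i.e. when $\iota$ is pseudohermitian; nondegeneracy of the Levi form (strict pseudoconvexity) furnishes the converse. The main obstacle I anticipate is purely a matter of careful bookkeeping: handling the transverse defect $T - \widetilde{T}$, which is a section of $\C TN$ along $M$ that need not be tangent to $M$, so that each covariant-derivative and bracket term must be consistently read as a section along $M$, and pinning down the torsion sign conventions so that the Levi coefficient in \eqref{e:ssf4} emerges with the correct sign. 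It is also worth flagging that ``symmetric'' must be read as symmetry of the restriction to the horizontal space, since \eqref{e:ssf5}--\eqref{e:ssf6} show that even in the pseudohermitian case the ambient torsion contributes a generally nonzero term $\widetilde{\tau}Z - \tau Z$ to $\sff(T,Z)$ while $\sff(Z,T) = 0$.
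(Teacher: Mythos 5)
Your construction of $\sff$ and your proofs of \cref{e:ssf1,e:ssf2,e:ssf3,e:ssf4,e:ssf5,e:ssf6} follow essentially the same route as the paper: reality of the Tanaka--Webster connections for \cref{e:ssf1,e:ssf2}, comparison of the prescribed torsions for \cref{e:ssf3,e:ssf4}, parallelism of the Reeb fields for \cref{e:ssf5}, and the bracket-plus-torsion computation for \cref{e:ssf6}, whose details the paper omits and which you carry out correctly. Your discussion of well-definedness and tensoriality is likewise the standard argument the paper alludes to, just written out in more detail.

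The genuine problem is in your last paragraph, and it is not a matter of interpretation: you assert that ``symmetric'' must be weakened to symmetry of the horizontal restriction because, even when $\widetilde{T} = \iota_{\ast}T$, the term $\widetilde{\tau}Z - \tau Z$ in $\sff(T,Z)$ is ``generally nonzero'' while $\sff(Z,T) = 0$. This is false, and the missing idea is a type consideration, which is exactly how the paper closes its argument. The Tanaka--Webster connections preserve the $(1,0)$-bundles, so $\sff(T,Z) = \widetilde{\nabla}_T Z - \nabla_T Z$ is a section of $T^{1,0}N$; on the other hand, the pseudohermitian torsions map $(1,0)$-vectors to $(0,1)$-vectors (purity: $\tau J = -J\tau$), so $\widetilde{\tau}Z - \tau Z$ is a section of $T^{0,1}N$. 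In the pseudohermitian case your own formula \cref{e:ssf6} reduces to $\sff(T,Z) = \widetilde{\tau}Z - \tau Z$, so this vector is simultaneously of type $(1,0)$ and $(0,1)$, hence zero; thus $\sff(T,Z) = \sff(Z,T) = 0$, together with the conjugate identities. The same type argument applied to \cref{e:ssf4} shows that in this case $\sff(Z,\Wbar) = \sff(\Wbar,Z)$ forces both to vanish as well. Consequently $\sff$ is symmetric as a tensor on all of $\C TM$, exactly as the proposition asserts, and your proof of the implication ``pseudohermitian $\Rightarrow$ symmetric'' is incomplete without this step (it also proves the stronger byproduct $\widetilde{\tau}Z = \tau Z$ along $M$). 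Your converse --- symmetry forces $T = \widetilde{T}$ by nondegeneracy of the Levi form in \cref{e:ssf4} --- is correct and agrees with the paper.
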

\begin{proof}
	That $\sff$ is well-defined and tensorial follows from standard arguments. Equations \cref{e:ssf1,e:ssf2} follow from the reality of the Tanaka-Webster connection. Equation \cref{e:ssf3} follows from the equation $\mathbb{T}_\nabla(Z,W) = 0$ for $(1,0)$-vectors $Z$ and $W$ \edit{on $M$ and similarly for $N$}. Proof of \cref{e:ssf4} uses the fact that $\mathbb{T}_{\nabla}(Z, \Wbar ) = i\langle Z,\Wbar \rangle T$. Precisely, by \cite{tanaka1975differential}
	\begin{equation}
		\nabla_Z \Wbar - \nabla_{\Wbar} Z - [Z, \Wbar] = \mathbb{T}(Z, \Wbar) = i\langle Z , \Wbar \rangle T,
	\end{equation}
	and similarly for $\widetilde{\nabla}$ and thus \cref{e:ssf4} follows.
	
	To prove \cref{e:ssf5}, observe that $\nabla T = 0$ and $\widetilde{\nabla}\widetilde{T} = 0$ \cite{tanaka1975differential}, and hence $\sff (Z,T) 
		=
		\widetilde{\nabla}_ZT - \nabla_ZT
		=
		\widetilde{\nabla}_Z(T - \widetilde{T})$, as desired. The proof of \cref{e:ssf6} also follows from direct calculations. We omit the details.
		
		Assume that $\widetilde{T}  = \iota_{\ast} T$, then $\iota$ is called a pseudohermitian immersion \cite{dragomir1995pseudohermitian} and the pair $(\theta, \eta)$ is said to be an admissible pair \cite{ebenfelt2004rigidity}. In this case, it follows from \cref{e:ssf5} that $\sff(Z,T) = 0$. On the other hand, from \cref{e:ssf6}, $\sff(T,Z) = \widetilde{\tau} Z - \tau Z$ and hence both sides vanish by type consideration. Similarly, from \edit{\cref{e:ssf4}}, $\sff(Z,\Wbar) = \sff(\Wbar,Z)$ and thus both sides vanish. Thus, $\sff$ is symmetric.
		
		Conversely, if $\sff$ is symmetric, then it follows from \cref{e:ssf4} that $\widetilde{T}  = \iota_{\ast} T$. The proof is complete.
\end{proof}
\begin{definition}
	Let $\iota \colon (M, \theta) \hookrightarrow (N, \eta)$ be a pseudohermitian submanifold, $\iota^{\ast} \eta = \theta$. The $(1,0)$-\emph{mean curvature} vector of $M$ in $N$ is the $(1,0)$-vector defined by
	\begin{equation}
	H:= \frac{1}{n}\sum_{\alpha = 1}^n \sff(Z_{\abar} , Z_{\alpha}),
	\end{equation}
	where $\{Z_{\alpha} \colon \alpha = 1,2,\dots , n\}$ is an orthonormal frame of $T^{1,0}M$ and $Z_{\abar} = \overline{Z}_{\alpha}$. 
	We also define $\mu = \mu_M^N$ to be 
	the trace of $\Sym\sff$, i.e. $\mu = \Re H$.
\end{definition}
Let $\iota \colon (M, \theta) \hookrightarrow (N, \eta)$ be a pseudohermitian submanifold, $\iota^{\ast} \eta = \theta$. Then $\iota_{\ast}$ sends $T^{1,0}M$ into $T^{1,0} N$. We can define the normal bundle $N^{1,0}M$ as a subundle of $T^{1,0} N$ as usual. Here the orthogonality only depends on the CR structure, but not on the pseudohermitian structure. We also define $N^{0,1}M$ similarly.

\begin{proposition}\label{prop:ttprime}
	Let $\iota \colon (M, \theta) \hookrightarrow (N, \eta)$ be a pseudohermitian submanifold, $\iota^{\ast} \eta = \theta$. If $T$ and $\widetilde{T}$ are the Reeb fields corresponding to $\theta$ and $\eta$, respectively, then for all tangent vectors $Z$, $W$ in $T^{1,0}M$,
	\begin{align}
		H - \overline{H} & = i(T - \widetilde{T}), \label{e:s1}\\
		\sff(Z,\Wbar) & = \langle Z , \Wbar \rangle \overline{H}, \label{e:s2}\\
		\widetilde{\tau} Z - \tau Z
			&=
		\sff(T,Z) - \sff(Z,T) = -i \widetilde{\nabla}_Z\overline{H}, \label{e:s3} \\
		\sff(T,Z) & = -i \widetilde{\nabla}_Z H.
	\end{align}
Moreover, $H \in N^{1,0}M$ and $\sff(Z,W) \in N^{1,0}M$.
\end{proposition}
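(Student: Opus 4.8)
The plan is to deduce every assertion from the already-established relations \cref{e:ssf1,e:ssf2,e:ssf3,e:ssf4,e:ssf5,e:ssf6} together with a single geometric observation about the vector $T-\widetilde{T}$. First I would show that $T-\widetilde{T}$ is \emph{horizontal}: since $\theta=\iota^{\ast}\eta$ we have $\eta(T)=\theta(T)=1=\eta(\widetilde{T})$, so $\eta(T-\widetilde{T})=0$ and hence $T-\widetilde{T}\in\ker\eta=T^{1,0}N\oplus T^{0,1}N$. Writing $T-\widetilde{T}=\zeta+\overline{\zeta}$ with $\zeta\in T^{1,0}N$, I would next check that $\zeta$ is \emph{normal}, i.e. $\zeta\in N^{1,0}M$. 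This follows because $d\eta(T,\overline{V})=d\theta(T,\overline{V})=0$ and $d\eta(\widetilde{T},\overline{V})=0$ for every $V\in T^{1,0}M$ (both $T$ and $\widetilde{T}$ are the respective Reeb fields and $d\theta=\iota^{\ast}d\eta$), so $d\eta(T-\widetilde{T},\overline{V})=0$; since $d\eta$ has no $(0,2)$ component (by integrability of $T^{0,1}N$) this reduces to $L_{\eta}(\zeta,\overline{V})=0$ for all $V\in T^{1,0}M$, which is exactly the normality of $\zeta$.

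With this in hand the first two identities are pure bookkeeping by type. The vector $\sff(Z,\Wbar)$ is of type $(0,1)$ and $\sff(\Wbar,Z)$ is of type $(1,0)$ (the Tanaka--Webster connections preserve type), so splitting \cref{e:ssf4} into its $(1,0)$ and $(0,1)$ components gives $\sff(Z,\Wbar)=-i\langle Z,\Wbar\rangle\,\overline{\zeta}$ and $\sff(\Wbar,Z)=i\langle Z,\Wbar\rangle\,\zeta$. Tracing the second relation over an orthonormal frame $\{Z_{\alpha}\}$ yields $H=\tfrac1n\sum_{\alpha}\sff(Z_{\abar},Z_{\alpha})=i\zeta$, whence $\overline{H}=-i\overline{\zeta}$. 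Substituting back gives \cref{e:s2}, while $H-\overline{H}=i(\zeta+\overline{\zeta})=i(T-\widetilde{T})$ is \cref{e:s1}; and since $\zeta$ is normal, so is $H=i\zeta$.

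For the normality of $\sff(Z,W)$ I would compare the metric compatibility of $\nabla$ on $M$ with that of $\widetilde{\nabla}$ on $N$. Evaluating $Z\,L_{\theta}(W,\overline{V})$ in two ways (using $\theta=\iota^{\ast}\eta$, so that the Levi forms agree along $M$) and substituting the Gauß formula $\widetilde{\nabla}_{Z}W=\nabla_{Z}W+\sff(Z,W)$ produces the identity $L_{\eta}(\sff(Z,W),\overline{V})+L_{\eta}(W,\sff(Z,\overline{V}))=0$ for all $V\in T^{1,0}M$. By \cref{e:s2} the second term equals $\langle Z,\overline{V}\rangle\,L_{\eta}(W,\overline{H})$, which vanishes because $H$ is normal; hence $L_{\eta}(\sff(Z,W),\overline{V})=0$ and $\sff(Z,W)\in N^{1,0}M$.

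Finally, the two statements involving $T$ follow from \cref{e:ssf5,e:ssf6} and another type split. Writing out $\sff(T,Z)-\sff(Z,T)$ from \cref{e:ssf5,e:ssf6} and applying the torsion identity for $\widetilde{\nabla}$, the brackets cancel and the only extra term is $\mathbb{T}_{\widetilde{\nabla}}(T-\widetilde{T},Z)$; this vanishes, since the $(1,0)$ part $\zeta$ contributes $0$ by purity of type, while the $(0,1)$ part $\overline{\zeta}$ contributes a multiple of $L_{\eta}(Z,\overline{\zeta})\,\widetilde{T}$, which is zero precisely because $\zeta$ is normal. This gives $\sff(T,Z)-\sff(Z,T)=\widetilde{\tau}Z-\tau Z$. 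Now $\sff(T,Z)$ is of type $(1,0)$, the torsion difference $\widetilde{\tau}Z-\tau Z$ is of type $(0,1)$, and by \cref{e:ssf5} together with \cref{e:s1} we have $\sff(Z,T)=\widetilde{\nabla}_{Z}(T-\widetilde{T})=-i\widetilde{\nabla}_{Z}H+i\widetilde{\nabla}_{Z}\overline{H}$. Matching $(1,0)$ parts gives $\sff(T,Z)=-i\widetilde{\nabla}_{Z}H$, and matching $(0,1)$ parts gives $\widetilde{\tau}Z-\tau Z=-i\widetilde{\nabla}_{Z}\overline{H}$, completing \cref{e:s3}. I expect the main obstacle to be the normality step for $\sff(Z,W)$: it is the only place where one must return to metric compatibility rather than manipulate the already-proven tensorial identities, and some care with the Hermitian conventions is needed so that the tangential pieces cancel correctly.
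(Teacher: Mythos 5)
Your proof is correct, and for the assertions the paper actually proves it uses the same core ingredients: both arguments split \cref{e:ssf4} into $(1,0)$ and $(0,1)$ parts to obtain \cref{e:s1,e:s2}, both derive normality from the Reeb-field identities $d\theta(T,\cdot)=0$, $d\eta(\widetilde{T},\cdot)=0$ together with $d\theta=\iota^{\ast}d\eta$, and both establish $\sff(Z,W)\in N^{1,0}M$ via the metric-compatibility identity $\langle \sff(Z,W),\overline{V}\rangle=-\langle W,\sff(Z,\overline{V})\rangle$ combined with \cref{e:s2} and the normality of $H$. The organization differs in a minor but pleasant way: the paper first computes $H=\overline{H}+i(T-\widetilde{T})$ by tracing \cref{e:ssf4} and only afterwards proves $H$ is normal, whereas you first show that the $(1,0)$ part $\zeta$ of the horizontal vector $T-\widetilde{T}$ is normal and then identify $H=i\zeta$, so normality of $H$ comes for free; the two routes are mathematically equivalent. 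The genuine difference is coverage: the paper's printed proof ends after the normality statements and never verifies \cref{e:s3} or the final identity $\sff(T,Z)=-i\widetilde{\nabla}_Z H$, while you supply a correct derivation of both from \cref{e:ssf5,e:ssf6} --- the torsion identity for $\widetilde{\nabla}$ rewrites $\sff(T,Z)-\sff(Z,T)$ as $\mathbb{T}_{\widetilde{\nabla}}(T-\widetilde{T},Z)+\widetilde{\tau}Z-\tau Z$, the mixed term $-i\langle Z,\overline{\zeta}\rangle\,\widetilde{T}$ vanishes precisely because $\zeta$ is normal, and a final type split (using that $\sff(T,Z)$ and $\widetilde{\nabla}_Z H$ are $(1,0)$ while $\widetilde{\tau}Z-\tau Z$ and $\widetilde{\nabla}_Z\overline{H}$ are $(0,1)$) yields the two remaining equations simultaneously. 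So your argument is not only correct but strictly more complete than the one in the paper, and that last type-splitting step is exactly the right way to fill the gap the authors left to the reader.
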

\begin{proof} By direct calculation using \eqref{e:ssf4}, one has
\begin{align}
	 H
 	& =
 \frac{1}{n} \sum_{\alpha = 1}^n \sff(Z_{\abar}, Z_{\alpha}) \notag \\
	& =
 \frac{1}{n} \sum_{\alpha = 1}^n\left(\sff(Z_{\alpha} , Z_{\abar} )  + i \langle Z_{\alpha} , Z_{\abar}\rangle_{\theta}(T - \widetilde{T})\right) \notag \\
 	& =
 \overline{H} + i( T - \widetilde{T}).
\end{align}

\edit{Combining with \cref{e:ssf4}, we have} \edit{$\sff(Z,\Wbar) - \sff(\Wbar , Z) = -i \langle Z , \Wbar \rangle (T - \widetilde{T}) = \langle Z , \Wbar \rangle \left(\overline{H} - H\right)$}. Taking the (1,0) and (0,1) parts, we obtain \cref{e:s2}. 

To show that $H\in N^{1,0}M$, observe that $d\theta = d(\iota^{\ast}\eta) = \iota^{\ast}(d\eta)$. Thus, for every $X \in T^{1,0}M$,
\begin{equation}
	0 = d\theta(T,X) = d\eta(T,X).
\end{equation}
Therefore, if $X$ is tangent to $M$,
\begin{equation}
	\langle \overline{H}, X \rangle 
	=
	\langle \overline{H} - H , X \rangle 
	=
	-i \langle T - \widetilde{T} , X \rangle 
	=
	0.
\end{equation}
This implies that $\overline{H} \in N^{0,1}M$ and $H \in N^{1,0}M$. Finally, for $\Zbar, \Wbar \in T^{0,1}M$ and $X \in T^{1,0}M$, it holds that
\begin{equation}
	\langle \sff(\Zbar , \Wbar) , X \rangle
	=
	-\langle \Wbar , \sff(\Zbar, X) \rangle
	=
	-\langle \Zbar , X \rangle \langle \Wbar , H \rangle 
	=0.
\end{equation}
This implies that $\sff(\Zbar , \Wbar) \in N^{0,1}M$ and $\sff(Z,W) \in N^{1,0}M$. The proof is complete.
\end{proof}
\subsection{Change of contact forms}\label{ss:changecontact}
Let $\iota \colon (M, \theta) \hookrightarrow (N, \eta)$ be a pseudohermitian submanifold, $\iota^{\ast} \eta = \theta$. The total differential $du$ \editb{of a smooth function $u$} can be decomposed into $(1,0)$, $(0,1)$, and the transverse parts as follows:
\begin{equation}\label{e:dudecomp}
du = \partial_b u + \bar{\partial}_b u + (T^{\eta} u) \eta.
\end{equation}
This decomposition depends on the choice of pseudohermitian structure~$\eta$. We then define
\begin{equation}
\grad^{1,0} u 
=
u^{\gamma} Z_{\gamma},
\quad
\grad^{0,1} u
=
\overline{\grad^{1,0} \bar{u}}.
\end{equation}
If $\iota \colon M\hookrightarrow (N,\eta)$ and $\theta = \iota^{\ast} \eta$, then we have
\begin{equation}
\grad^{1,0}_N u = \grad^{1,0}_{N,M} u + (\grad^{1,0}_{N,M} u)^\perp,
\end{equation}
and similarly for $\grad^{0,1} u$. Here the orthogonal complements in $T^{1,0} N$ and $T^{0,1} N$ are defined using the Levi metric corresponding to any pseudohermitian structure on  $N$.

\begin{proposition}\label{prop:conch}
	Let $\iota \colon (M, \theta) \hookrightarrow (N, \eta)$ be a pseudohermitian submanifold, $\iota^{\ast} \eta = \theta$. Suppose that $P$ is the manifold $N$ with $\widetilde{\eta} = e^{u} \eta$. Put $\widetilde{\theta}: = e^{u\circ \iota} \theta$. Let $\widetilde{\sff}$ be the second fundamental form of the inclusion $\iota\colon (M,\widetilde{\theta}) \hookrightarrow (P,\widetilde{\eta})$. Then
	\begin{equation}\label{e:crsff}
	\widetilde{\sff}(Z, W) = \sff(Z, W), \quad Z, W \in T^{1,0}M,
	\end{equation}
	\begin{equation}\label{e:221}
	\widetilde{\sff}(\Zbar, W) = \sff(\Zbar , W) - \langle \Zbar , W\rangle_{\eta} (\grad^{1,0}_{N,M} u)^\perp, \quad \Zbar  \in T^{0,1}M, \ W \in T^{1,0}M,
	\end{equation}
	and
	\begin{equation}\label{e:224}
	e^{u}\mu_M^P
	=
	\mu_M^N -  \Re (\grad^{1,0}_{N,M} u)^\perp .
	\end{equation}
\end{proposition}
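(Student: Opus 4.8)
The plan is to reduce all three identities to the transformation law of the Tanaka-Webster connection under a conformal change of the contact form, applied simultaneously on the ambient manifold $N$ and on the submanifold $M$, and then to difference the two via the Gauß formula \eqref{e:sffdef}. Writing $\widetilde\nabla$, $\nabla$ for the connections before the change, I would denote the connections of $(P,\widetilde\eta)$ and $(M,\widetilde\theta)$ by $\widetilde\nabla + \widetilde B$ and $\nabla + B$, where $\widetilde B$, $B$ are the respective difference tensors. Since $\widetilde\sff$ and $\sff$ are assembled from these connections by the same Gauß formula, one gets $\widetilde\sff(X,Y) - \sff(X,Y) = \widetilde B(X,Y) - B(X,Y)$ for all $X,Y$ tangent to $M$, so everything hinges on comparing $\widetilde B$ (built from $du$ and the Levi metric on $N$) with $B$ (built from $d(u\circ\iota)$ and the Levi metric on $M$). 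This mirrors the analysis of the Kähler case in \cite{son2019semi}.

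First I would record the transformation law in a form that exhibits its type-by-type behaviour on the horizontal space. The key structural fact is that, for $Z,W\in T^{1,0}$, the difference tensor involves only the directional derivatives $Zu$, $Wu$ and the vectors $Z,W$ themselves, with no occurrence of the raised gradient $\grad^{1,0}u$: the terms carrying $\grad^{1,0}u$ enter the connection forms paired with the conjugate coframe, which annihilates $(1,0)$ directions. By contrast, for a mixed pair $\Zbar\in T^{0,1}$, $W\in T^{1,0}$, the conjugate coframe no longer vanishes on the differentiated slot, and a single term of the shape $-\langle\Zbar, W\rangle_{\eta}\,\grad^{1,0}u$ survives.

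With this in hand the comparison is immediate. Every term of $\widetilde B$ assembled from the directional derivatives $Zu, Wu$ and from the Levi metric is intrinsic to $M$ — derivatives of $u$ along directions tangent to $M$ agree whether computed in $N$ or in $M$, and the Levi metrics agree since $\theta = \iota^{\ast}\eta$ — so these pieces cancel against the corresponding pieces of $B$. The only mismatch comes from the gradient: the ambient $(1,0)$-gradient decomposes as $\grad^{1,0}_N u = \grad^{1,0}_{N,M}u + (\grad^{1,0}_{N,M}u)^\perp$, with tangential part equal to the intrinsic $M$-gradient and normal part invisible to $M$. Hence for $Z,W\in T^{1,0}$ there is no gradient term and $\widetilde B = B$, giving \eqref{e:crsff}; for the mixed type the surviving discrepancy is exactly $-\langle\Zbar, W\rangle_{\eta}(\grad^{1,0}_{N,M}u)^\perp$, which is \eqref{e:221}.

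Finally I would obtain \eqref{e:224} by tracing \eqref{e:221} over an orthonormal frame. The one delicate point is the conformal rescaling of the frame: a frame $\{Z_\alpha\}$ orthonormal for $\theta$ rescales to $\{e^{-u/2}Z_\alpha\}$ to be orthonormal for $\widetilde\theta = e^{u}\theta$, so each of the two slots of the bilinear $\widetilde\sff$ contributes a factor $e^{-u/2}$ and the trace defining $\mu_M^P$ acquires an overall factor $e^{-u}$. Multiplying by $e^{u}$ and inserting \eqref{e:221} summed against $\{Z_\alpha\}$ turns the trace of $\widetilde\sff$ into $H_M^N - (\grad^{1,0}_{N,M}u)^\perp$; taking real parts and recalling $\mu = \Re H$ yields \eqref{e:224}. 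The main obstacle I anticipate is purely bookkeeping in the first step: writing the Tanaka-Webster transformation law cleanly enough that the vanishing versus survival of the $\grad^{1,0}u$ term is transparent type-by-type, and matching the coefficient against the paper's normalization $\widetilde\theta = e^{u}\theta$ rather than the more common $e^{2f}\theta$. Once the gradient term is isolated, the geometric content — that only its normal component survives restriction to $M$ — is immediate.
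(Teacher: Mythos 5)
Your proposal is correct and follows essentially the same route as the paper: both rest on Lee's transformation formulas for the Tanaka--Webster connection under the change $\widetilde\theta = e^u\theta$ (no gradient term for $(1,0)$ pairs, a single $-\langle \Zbar, W\rangle\,\grad^{1,0}u$ term for mixed pairs), applied on $M$ and $N$ and differenced so that only the normal part of the ambient gradient survives, with \eqref{e:224} obtained by tracing \eqref{e:221}. Your explicit treatment of the frame rescaling $e^{-u/2}Z_\alpha$ producing the factor $e^u$ in \eqref{e:224} is a detail the paper leaves implicit, but it is the same argument.
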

\begin{proof}
	The first two identities follow from Lee's formulas for the pseudoconformal change of the metrics \cite{lee1986fefferman}. Precisely, on $M$, we have
	\begin{equation}
	\widetilde{\nabla}_{Z}W = \nabla_ZW + Z(u) W + W(u) Z,
	\end{equation}
	and
	\begin{equation}\label{e:224a}
	\widetilde{\nabla}_{\Zbar}W = \nabla_{\Zbar}W  - \langle W,\Zbar\rangle_{\theta}\, \grad^{1,0}_N u.
	\end{equation}
	Similar formulas hold on $M$ \edit{and hence \cref{e:crsff,e:221} follow immediately. The last identity \cref{e:224} also follows by taking the trace of \cref{e:221} and its conjugate.}
\end{proof}
\begin{remark}
In view of \cref{e:crsff}, $\sff(Z,W)$, \edit{where $Z,W$ are $(1,0)$-vectors}, is called the CR second fundamental 
form of the CR immersion. It can be computed by any pair of pseudohermitian structures $\theta = \iota ^{\ast} \eta$, not necessary admissible. This notion has been extensively used in the study of the CR immersions \cite{dragomir1995pseudohermitian,ebenfelt2004rigidity,ebenfelt2014cr}.
\end{remark}
\subsection{The Gauß and Weingarten equations}

\begin{proposition}[Pseudohermitian Weingarten Equation] If $N$ is a section of $N^{1,0}M \oplus N^{0,1}M$, then
	\begin{equation}
		\langle\widetilde{\nabla}_ X N , Y \rangle 
		=
		-\langle N , \sff(X,Y) \rangle 
	\end{equation}
for all sections $X,Y$ of $T^{1,0}M \oplus T^{0,1}M$.
\end{proposition}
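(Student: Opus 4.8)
The plan is to mimic the classical Riemannian derivation of the Weingarten equation, using that the ambient Tanaka-Webster connection $\widetilde{\nabla}$ is compatible with the Webster (Levi) metric. The only structural input beyond metric compatibility is the Gauß formula, which is built into the definition \eqref{e:sffdef} of $\sff$, together with the fact that the induced connection $\nabla_XY$ is tangential.

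First I would record that $N$ being a section of $N^{1,0}M \oplus N^{0,1}M$ and $Y$ a section of $T^{1,0}M \oplus T^{0,1}M$ forces $\langle N, Y\rangle = 0$ pointwise on $M$. Since the Levi metric pairs $(1,0)$ and $(0,1)$ parts, this reduces to the vanishing of $\langle N^{1,0}, Y^{0,1}\rangle$ and $\langle N^{0,1}, Y^{1,0}\rangle$, each of which follows directly from the definition of the normal bundle as the orthogonal complement of the tangential part in $T^{1,0}N$ (resp. $T^{0,1}N$); as already noted, this orthogonality depends only on the CR structure.

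Next, since $X$ is tangent to $M$, the covariant derivative $\widetilde{\nabla}_X N$ depends only on the restriction of $N$ to $M$ and is thus well defined. Differentiating the identity $\langle N, Y \rangle = 0$ along $X$ and invoking metric compatibility of $\widetilde{\nabla}$ yields
\begin{equation}
0 = X \langle N, Y\rangle = \langle \widetilde{\nabla}_X N, Y\rangle + \langle N, \widetilde{\nabla}_X Y\rangle.
\end{equation}
I would then substitute the Gauß formula $\widetilde{\nabla}_X Y = \nabla_X Y + \sff(X,Y)$. As $\nabla_X Y$ is a section of $\C TM$, it is tangential, so $\langle N, \nabla_X Y\rangle = 0$ by the same orthogonality used above; hence $\langle N, \widetilde{\nabla}_X Y\rangle = \langle N, \sff(X,Y)\rangle$, and rearranging gives the asserted equation.

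The steps are all routine, so there is no genuine obstacle; the only points needing care are (i) checking that the Hermitian Levi-metric orthogonality really annihilates both mixed pairings in $\langle N, Y\rangle$ and in $\langle N, \nabla_X Y\rangle$, and (ii) confirming that $\widetilde{\nabla}_X N$ is independent of the chosen ambient extension of $N$, which holds precisely because $X$ is tangent to $M$. Restricting $X,Y$ to the horizontal subspace $T^{1,0}M \oplus T^{0,1}M$ keeps $\nabla_X Y$ horizontal and avoids the transverse (Reeb) complications, so the argument proceeds without the torsion terms intruding.
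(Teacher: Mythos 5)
Your proof is correct. The paper actually states this proposition without proof, treating it as routine, and your argument---differentiating $\langle N, Y\rangle = 0$ along a tangential direction, using metric compatibility of the ambient Tanaka--Webster connection with the Levi form, and killing $\langle N, \nabla_X Y\rangle$ because $\nabla$ preserves the horizontal subbundle $T^{1,0}M \oplus T^{0,1}M$---is precisely the standard derivation the authors intend; indeed the same device (rewriting $\langle \sff(\Zbar,\Wbar), X\rangle$ as $-\langle \Wbar, \sff(\Zbar,X)\rangle$ via metric compatibility) is already used inside their proof of \cref{prop:ttprime}. Your two points of care are also the right ones: the Levi-form orthogonality defining $N^{1,0}M \oplus N^{0,1}M$, and the fact that horizontality of $X,Y$ keeps the Reeb direction (where the pairing with normal vectors is not innocuous, since $T-\widetilde T = -i(H-\overline H)$ is normal) out of the computation.
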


In the following, we shall use the following convention for the curvature operator:
\begin{equation}
R(X,Y)Z
=
\nabla_{X} \nabla_{Y} Z
-
\nabla_{Y} \nabla_{X} Z
-
\nabla_{[X,Y]}Z.
\end{equation}
Then for $X, Y, Z$ tangent to $M$,
\begin{align}
\widetilde{R}(X,Y)Z
& =
R(X,Y)Z
+
\sff(X, \nabla_{Y} Z)
-
\sff(Y, \nabla_{X} Z) \notag \\
& \qquad 
+ \widetilde{\nabla}_X (\sff (Y, Z)) 
-
\widetilde{\nabla}_Y (\sff (X, Z))
- \sff([X,Y], Z).
\end{align}
Here, $R$ and $\widetilde{R}$ are the curvature on $(M,\theta)$ and $(N,\eta)$ respectively.
\begin{proposition}[Pseudohermitian Gauß equation]\label{prop:gauss}
Let $\iota \colon (M, \theta) \hookrightarrow (N, \eta)$ be a pseudohermitian submanifold, $\iota^{\ast} \eta = \theta$. Then the Gauß equation holds, i.e.,
	\begin{align}\label{e:gauss}
	\langle \widetilde{R} (X ,\overline{Y}) Z , \Wbar \rangle
	& =
	\langle R (X ,\overline{Y}) Z , \Wbar \rangle
	+
	\left\langle \sff(X, Z) , \sff(\overline{Y}, \Wbar) \right\rangle \notag \\ 
	& \quad -
	|H|^2 \left( \langle \overline{Y} , Z \rangle \langle X , \Wbar \rangle +
	\langle X , \overline{Y} \rangle \langle Z , \Wbar \rangle \right).
	\end{align}
Moreover,
\begin{equation}\label{e:tor}
	\langle \widetilde{\tau} Z , W \rangle
	=
	\langle \tau Z , W \rangle
	+ i \langle \sff(Z,W) , \overline{H} \rangle.
\end{equation}
\end{proposition}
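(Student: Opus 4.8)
The plan is to derive both identities from the curvature comparison formula stated immediately before the proposition, which already packages all the Gauß--Codazzi information into one tensorial identity. First I would substitute $Y\mapsto\overline{Y}$ and take the pairing with a tangent $(0,1)$-vector $\Wbar$, obtaining
\begin{align*}
\langle \widetilde{R}(X,\overline{Y})Z,\Wbar\rangle
&= \langle R(X,\overline{Y})Z,\Wbar\rangle
+ \langle \sff(X,\nabla_{\overline{Y}}Z),\Wbar\rangle
- \langle \sff(\overline{Y},\nabla_X Z),\Wbar\rangle \\
&\quad
+ \langle \widetilde{\nabla}_X\sff(\overline{Y},Z),\Wbar\rangle
- \langle \widetilde{\nabla}_{\overline{Y}}\sff(X,Z),\Wbar\rangle
- \langle \sff([X,\overline{Y}],Z),\Wbar\rangle.
\end{align*}
The whole computation then reduces to evaluating each term on the right using only the type properties of $\sff$ from \cref{prop:ttprime} and the pseudohermitian Weingarten equation.

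The first two $\sff$-terms vanish: the Tanaka--Webster connection preserves type, so $\nabla_{\overline{Y}}Z$ and $\nabla_X Z$ are again $(1,0)$, whence $\sff(X,\nabla_{\overline{Y}}Z)\in N^{1,0}M$ while $\sff(\overline{Y},\nabla_X Z)$ is a multiple of $H\in N^{1,0}M$ by \eqref{e:s2}; both are normal and pair to zero against the tangent vector $\Wbar$. For the covariant-derivative terms I would apply Weingarten to the normal sections $\sff(\overline{Y},Z)=\langle Z,\overline{Y}\rangle H$ and $\sff(X,Z)$, using also $\sff(X,\Wbar)=\langle X,\Wbar\rangle\overline{H}$ from \eqref{e:s2}:
\[
\langle \widetilde{\nabla}_X\sff(\overline{Y},Z),\Wbar\rangle
= -\langle \sff(\overline{Y},Z),\sff(X,\Wbar)\rangle
= -\langle Z,\overline{Y}\rangle\langle X,\Wbar\rangle\,|H|^2,
\]
while $-\langle \widetilde{\nabla}_{\overline{Y}}\sff(X,Z),\Wbar\rangle = \langle \sff(X,Z),\sff(\overline{Y},\Wbar)\rangle$ reproduces exactly the $\sff$-square term of \eqref{e:gauss}.

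The bracket term is the only delicate one, and is where the torsion enters. I would rewrite $[X,\overline{Y}] = \nabla_X\overline{Y} - \nabla_{\overline{Y}}X - i\langle X,\overline{Y}\rangle T$ using $\mathbb{T}_\nabla(X,\overline{Y})=i\langle X,\overline{Y}\rangle T$. The two horizontal pieces $\sff(\nabla_X\overline{Y},Z)$ and $\sff(\nabla_{\overline{Y}}X,Z)$ are again normal (or multiples of $H$) and drop out against $\Wbar$, leaving $-i\langle X,\overline{Y}\rangle\,\sff(T,Z)$. Since $\sff(T,Z)=-i\widetilde{\nabla}_Z H$ by \cref{prop:ttprime} and Weingarten gives $\langle \widetilde{\nabla}_Z H,\Wbar\rangle = -\langle H,\sff(Z,\Wbar)\rangle = -\langle Z,\Wbar\rangle\,|H|^2$, one finds $-\langle \sff([X,\overline{Y}],Z),\Wbar\rangle = -\langle X,\overline{Y}\rangle\langle Z,\Wbar\rangle\,|H|^2$; together with the earlier term this assembles the full symmetric $|H|^2$ expression in \eqref{e:gauss}.

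The torsion identity \eqref{e:tor} then follows from the same ideas: by \eqref{e:s3} one has $\widetilde{\tau}Z - \tau Z = -i\widetilde{\nabla}_Z\overline{H}$, and since $\overline{H}\in N^{0,1}M$ is normal, Weingarten yields $\langle \widetilde{\nabla}_Z\overline{H},W\rangle = -\langle \overline{H},\sff(Z,W)\rangle$ for tangent $(1,0)$-vectors $Z,W$, hence $\langle \widetilde{\tau}Z-\tau Z,W\rangle = i\langle \sff(Z,W),\overline{H}\rangle$. I expect the main difficulty to be bookkeeping rather than conceptual: tracking the Hermitian conjugation conventions in $|H|^2$ and in the pairing between $(1,0)$ and $(0,1)$ normal vectors, and checking at each step that the type decomposition is respected so that precisely the ``wrong-type'' terms annihilate the tangent slot $\Wbar$. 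The one genuinely substantive step is the bracket term, where the Tanaka--Webster torsion must be shown to supply the second, otherwise-missing, $|H|^2$ contribution.
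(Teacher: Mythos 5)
Your proposal is correct and follows essentially the same route as the paper's proof: both start from the curvature comparison formula preceding the proposition, eliminate the wrong-type terms by normality, apply the pseudohermitian Weingarten equation together with $\sff(X,\Wbar)=\langle X,\Wbar\rangle\overline{H}$ to produce the $\sff$-square and first $|H|^2$ terms, and handle the bracket term via $[X,\overline{Y}]=\nabla_X\overline{Y}-\nabla_{\overline{Y}}X-i\langle X,\overline{Y}\rangle T$ and \cref{e:s3}; the torsion identity \cref{e:tor} is likewise obtained from \cref{e:s3} and Weingarten. The only difference is that you spell out the type-vanishing arguments and the details for \cref{e:tor} that the paper leaves implicit.
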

\begin{remark}
For the special case of pseudohermitian \edit{immersions}, this is Eq. (5.3) or Proposition 5.1 in \cite{ebenfelt2004rigidity} as in this case, $H=0$; see also \cite{son2019semi} for similar equations.

It is sometimes helpful to write the Gauß equations using index notations, that is
\begin{equation}
\widetilde{R}_{\alpha\bbar\gamma\bar{\sigma}}
=
R_{\alpha\bbar\gamma\bar{\sigma}}
+
\omega_{\alpha\gamma}^{a}\, \omega_{\bbar\bar{\sigma}}^{\bar{b}}\, g_{a\bar{b}} - |H|^2 \left(g_{\gamma\bbar} g_{\alpha\bar{\sigma}} + g_{\alpha\bbar} g_{\gamma\bar{\sigma}} \right),
\end{equation}
and
\begin{equation}
	\widetilde{A}_{\alpha\beta} = A_{\alpha\beta} + i \omega_{\alpha\beta}^{a} H^{\bar{b}} g_{a\bar{b}}.
\end{equation}
\edit{Here  $\omega_{\alpha\beta}^a$ is the components of the second fundamental form in a local frame, i.e. $\sff(Z_{\alpha},Z_{\beta}) = \omega_{\alpha\beta}^a Z_a$, $H = H^{\bar{b}}Z_{\bar{b}}$, $A_{\alpha\beta} = \langle \tau Z_{\alpha}, Z_{\beta}\rangle$, and so on. These hold} for all isopseudohermitian immersions.
\end{remark}
\begin{proof}[Proof of \cref{prop:gauss}]
The proof is analogous to the case of CR immersions into Kähler manifolds considered in \cite{son2019semi}. Namely, for $X, Z \in T^{1,0}M$ and $\overline{Y}, \Wbar \in T^{0,1}M$, 
\begin{align}\label{e:tem}
\langle \widetilde{R}(X,\overline{Y}) Z, \Wba\rangle
& =
\langle R(X,\overline{Y}) Z, \Wba\rangle 
-
\langle \sff ([X,\overline{Y}], Z), \Wba \rangle \notag \\
& \quad 
+ \langle \widetilde{\nabla}_X (\sff (\overline{Y}, Z)) , \Wba \rangle 
- \langle \widetilde{\nabla}_{\overline{Y}}(\sff (X,Z)) , \Wba \rangle \notag  \\
& =
\langle R(X,\overline{Y}) Z, \Wba\rangle 
-
\langle \sff ([X,\overline{Y}], Z), \Wba \rangle \notag \\
& \quad 
- \langle \sff (\overline{Y}, Z) , \sff (X,\Wba) \rangle 
+ \langle \sff (X,Z) , \sff (\overline{Y},\Wba) \rangle \notag \\
& =
\langle R(X,\overline{Y}) Z, \Wba\rangle 
+ \langle \sff (X,Z) , \sff (\overline{Y},\Wba) \rangle \notag \\
& \quad 
- \langle \overline{Y}, Z \rangle \langle  X,\Wba \rangle |H|^2 
- \langle \sff ([X,\overline{Y}], Z), \Wba \rangle .
\end{align}
On the other hand, by the defining properties of the Tanaka-Webster connection \edit{\cite[Proposition~3.1]{tanaka1975differential}}, we have
\begin{equation}
    [X , \overline{Y}]
    =
    \nabla_X \overline{Y} - \nabla_{\overline{Y}} X - i\langle X , \overline{Y} \rangle T.
\end{equation}
Therefore, using \cref{e:s3},
\begin{equation}
    \langle \sff([X , \overline{Y}] , Z) ,  \Wbar \rangle
    =
    -i\langle X , \overline{Y} \rangle \langle \sff(T, Z) , \Wbar\rangle
    =
    \langle X , \overline{Y} \rangle \langle Z , \Wbar \rangle |H|^2.
\end{equation}
Putting this into \cref{e:tem} we obtain \cref{e:gauss}. Equation \cref{e:tor} follows easily from \cref{e:s3} and we omit the detail.
\end{proof}
\section{The chain rule: Proof of \cref{thm:crintro}}
Let $\iota \colon (M, \theta) \hookrightarrow (N, \eta)$ be a pseudohermitian submanifold, $\iota^{\ast} \eta = \theta$. Recall that the tensor $\nu = \nu_M^N$ introduced in the introduction is the symmetric real tensor on $T^{1,0}M \oplus T^{0,1}M$ defined by
	\begin{equation}
	\nu(X,Y) = 2\left\langle \Sym \sff(X,Y) , \mu \right\rangle - \langle X,Y\rangle |\mu|^2,
	\end{equation}
where $\mu$ is the trace of $\Sym \sff$, i.e., $\mu = \Re H$. The components of $\nu$ in the ``horizontal'' directions are given in local frame by
\begin{equation}
    \nu_{\alpha\beta} = \overline{\nu_{\abar\bbar}} = \omega_{\alpha\beta}^{a} H^{\bar{b}} g_{a\bar{b}},
    \quad
    \nu_{\alpha\bbar} = \nu_{\bbar\alpha}
    =
    \tfrac{1}{2}|H|^2 g_{\alpha\bbar}.
\end{equation}
\edit{Here the Greek indices $\alpha$ and $\beta$ run from $1$ to $n:=\dim_{C\!R}M$, the lowercase indices $a$ and $b$ run from $n+1$ to $\dim_{C\!R} N$, and $g$ is the Levi metric.}

\begin{proposition} Let $\iota \colon (M, \theta) \hookrightarrow (N, \eta)$ be a pseudohermitian submanifold, $\iota^{\ast} \eta = \theta$. Suppose $P$ is the manifold $N$ with the pseudohermitian structure $\widetilde{\eta} = e^u \eta$ and \editb{put} $\widetilde{\theta} = e^{u\circ \iota} \theta$. Then
for any  sections $X, Y$ of $T^{1,0} M \oplus T^{0,1}M$,
\begin{equation}
    \nu_M^P(X,Y)
    =
    	\nu_M^N(X,Y) - 2 \langle \Sym \sff (X,Y) , \xi \rangle_{\theta} + \langle X , Y \rangle_{\theta} |\xi|^2_{\theta}
\end{equation}
where $\xi = \Re (\grad^{1,0}_N u)^{\perp}$.
\end{proposition}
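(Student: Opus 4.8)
The plan is to compute $\nu_M^P$ directly from its definition, feeding in the conformal transformation laws for the second fundamental form and the mean curvature established in \cref{prop:conch} together with the structural identities of \cref{prop:ttprime}, and carefully tracking the conformal weights of the various inner products. Writing the unrescaled Levi metrics with no decoration and recalling that $\langle\cdot,\cdot\rangle_{\widetilde\eta}=e^{u}\langle\cdot,\cdot\rangle_{\eta}$ and $\langle\cdot,\cdot\rangle_{\widetilde\theta}=e^{u}\langle\cdot,\cdot\rangle_{\theta}$, the definition reads
\[
\nu_M^P(X,Y)=2\big\langle \Sym\widetilde{\sff}(X,Y),\widetilde\mu\big\rangle_{\widetilde\eta}-\langle X,Y\rangle_{\widetilde\theta}\,|\widetilde\mu|^2_{\widetilde\eta},
\]
where $\widetilde{\sff}$ and $\widetilde\mu=\mu_M^P$ denote the data of $\iota\colon(M,\widetilde\theta)\hookrightarrow(P,\widetilde\eta)$. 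Since $\nu$ is a real symmetric tensor on $T^{1,0}M\oplus T^{0,1}M$ and $\xi$ is real, it suffices to verify the identity on a holomorphic pair $(Z,W)$ and on a mixed pair $(Z,\Wbar)$; the remaining components follow by conjugation and symmetry. The inputs I will use are \eqref{e:crsff} (so that $\widetilde{\sff}=\sff$, hence $\Sym\widetilde{\sff}=\Sym\sff$, on holomorphic pairs), the identity $\Sym\sff(Z,\Wbar)=\langle Z,\Wbar\rangle\mu$ (which follows from \eqref{e:s2} and its transpose $\sff(\Wbar,Z)=\langle Z,\Wbar\rangle H$), and \eqref{e:224} in the form $\widetilde\mu=e^{-u}(\mu-\xi)$.

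For the holomorphic pair, $\langle Z,W\rangle=0$ annihilates the $|\widetilde\mu|^2$ term, and $\Sym\widetilde{\sff}(Z,W)=\Sym\sff(Z,W)$. The weight $e^{u}$ of $\langle\cdot,\cdot\rangle_{\widetilde\eta}$ cancels the weight $e^{-u}$ in $\widetilde\mu$, so $\nu_M^P(Z,W)=2\langle\Sym\sff(Z,W),\mu-\xi\rangle$; subtracting $\nu_M^N(Z,W)=2\langle\Sym\sff(Z,W),\mu\rangle$ leaves exactly $-2\langle\Sym\sff(Z,W),\xi\rangle$, and the $\langle Z,W\rangle|\xi|^2$ term is legitimately absent because $\langle Z,W\rangle=0$. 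For the mixed pair I first record the $P$-analogue $\Sym\widetilde{\sff}(Z,\Wbar)=\langle Z,\Wbar\rangle_{\widetilde\theta}\,\widetilde\mu$ of the identity above; combined with the weights this gives $\Sym\widetilde{\sff}(Z,\Wbar)=\langle Z,\Wbar\rangle(\mu-\xi)$, which one may cross-check against \eqref{e:221}. Substituting and again cancelling weights collapses both terms to $\nu_M^P(Z,\Wbar)=\langle Z,\Wbar\rangle\,|\mu-\xi|^2$. Expanding $|\mu-\xi|^2=|\mu|^2-2\langle\mu,\xi\rangle+|\xi|^2$ and using $\nu_M^N(Z,\Wbar)=\langle Z,\Wbar\rangle|\mu|^2$ together with $\Sym\sff(Z,\Wbar)=\langle Z,\Wbar\rangle\mu$ reproduces the three terms $\nu_M^N(Z,\Wbar)-2\langle\Sym\sff(Z,\Wbar),\xi\rangle+\langle Z,\Wbar\rangle|\xi|^2$.

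The step I expect to require the most care is the weight bookkeeping, and more subtly the verification that the single covariant correction $-2\langle\Sym\sff,\xi\rangle+\langle\cdot,\cdot\rangle|\xi|^2$ emerges uniformly even though $\xi$ enters through two different channels: it appears in $\widetilde\mu$ in \emph{every} component, but it appears in $\Sym\widetilde{\sff}$ (via \eqref{e:221}) only in the mixed components. Concretely, on holomorphic pairs the $|\xi|^2$ contribution is forced to vanish by $\langle Z,W\rangle=0$, whereas on mixed pairs it arises precisely from the quadratic term of $|\mu-\xi|^2$; one must check that these two behaviours are exactly what the claimed formula predicts. I would also make explicit at the outset that $\langle\cdot,\cdot\rangle$ denotes the complex-bilinear extension of the Levi metric, under which $|\mu|^2=\tfrac12|H|^2$ and the preliminary identity $\Sym\sff(Z,\Wbar)=\langle Z,\Wbar\rangle\mu$ hold, since the whole computation hinges on these normalizations.
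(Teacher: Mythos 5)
Your proof is correct and takes essentially the same route as the paper: both feed the transformation laws of \cref{prop:conch} in symmetrized form, $\Sym\widetilde{\sff}(X,Y)=\Sym\sff(X,Y)-\langle X,Y\rangle_{\theta}\,\xi$ and $e^{u}\mu_M^P=\mu_M^N-\xi$, into the definition of $\nu$ and expand, with the conformal weights cancelling exactly as you describe. The only difference is organizational: the paper performs a single bilinear expansion valid uniformly for all horizontal $X,Y$ (the holomorphic case needs no separate treatment since $\langle Z,W\rangle_{\theta}=0$ makes the correction term vanish automatically), whereas you split into holomorphic and mixed pairs and invoke the auxiliary identity $\Sym\sff(Z,\Wbar)=\langle Z,\Wbar\rangle\mu$, which the unified computation never needs.
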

\begin{proof}
Observe that 
\begin{equation}
	\widetilde{\sff}_M^P(X,Y)
	=
	\sff_M^P(X,Y) - \langle X,Y\rangle_{\theta} (\grad^{1,0}_N u)^{\perp},
\end{equation}
and thus, with $\xi = \Re (\grad^{1,0}_N u)^{\perp}$,
\begin{equation}
	\Sym \widetilde{\sff}(X,Y)
	= 
	\Sym \sff(X,Y) - \langle X,Y\rangle_{\theta} \xi.
\end{equation}
Taking the trace, we obtain
\begin{equation}
	e^u \mu_M^P = \mu_M^N - \xi.
\end{equation}
Therefore,
\begin{align}
	\nu_M^P(X,Y)
	& =
	2 \langle \Sym \widetilde{\sff}(X,Y), \mu_M^P \rangle_{\widetilde{\theta}} - \langle X,Y \rangle_{\widetilde{\theta}} |\mu_M^P|^2_{\widetilde{\theta}} \notag \\
	& = 
	2\langle \Sym \sff(X,Y) - \langle X,Y \rangle_{\theta}\, \xi , \mu_M^N  - \xi\rangle_{\theta} -  \langle X,Y \rangle_{\theta} |\mu_M^N - \xi |^2_{\theta} \notag \\
	& = 
	\nu_M^N(X,Y) - 2 \langle \Sym \sff (X,Y) , \xi \rangle_{\theta} + \langle X , Y \rangle_{\theta} |\xi|^2_{\theta}.
\end{align}
The proof is complete.
\end{proof}
Recall that the operator $\mathcal{H}(u)$ on a pseudohermitian manifold $(M,\theta)$ is defined, for horizontal vectors, by
\begin{align}
	\mathcal{H}_{\theta}(u) = \Sym \nabla\nabla u - \partial_b u \otimes \partial_b u - \bar{\partial}_b u \otimes \bar{\partial}_b u + \tfrac{1}{2}|\bar{\partial}_b u|^2 L_{\theta}.
\end{align}
The tensor $\mathcal{H}_{\theta}(u)$ is closely related to the CR Schwarzian tensor \cite{son2018schwarzian}. In the notations of \cite{son2018schwarzian},
\begin{equation}
	B_{\theta}\left(\tfrac{1}{2}u\right) =  \mathcal{H}_{\theta}(u) + \frac{1}{2n} \left( \Delta_b u - n |\bar{\partial}_b u |^2 \right)L_{\theta}.
\end{equation}
is the traceless part of $\mathcal{H}$. Here \edit{$L_{\theta}(X,Y) = \langle X, Y\rangle$ is the Levi metric and} we use the convention that $\Delta_b$ is a nonnegative operator.
We can sometimes write $u$ for $u\circ \iota$ as a function on $M$.
\begin{proposition}[cf. \cite{son2018schwarzian}]\label{prop32}
	Let $u, v \colon M \to \mathbb{R}$ be smooth functions on $M$. Then
	\begin{align}\label{e:1cocycle}
		\mathcal{H}_{\theta}(u + v)
		=
		\mathcal{H}_{\theta} (u) + \mathcal{H}_{\hat{\theta}} (v)
	\end{align}
	where $\hat{\theta} = e^u \theta$.
\end{proposition}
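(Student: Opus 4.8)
The plan is to prove the cocycle identity by reducing it to the transformation law of the symmetrized pseudohermitian Hessian under the pseudoconformal change $\hat{\theta} = e^u\theta$, and to organize the calculation so that only bilinear cross terms survive. First I would isolate the three pieces of $\mathcal{H}_{\hat{\theta}}(v)$. On the horizontal distribution $T^{1,0}M\oplus T^{0,1}M$ the forms $\partial_b v$ and $\bar{\partial}_b v$ are intrinsic (they do not see the choice of contact form), so the quadratic terms $\partial_b v\otimes\partial_b v$ and $\bar{\partial}_b v\otimes\bar{\partial}_b v$ are literally unchanged when computed with $\hat{\theta}$ instead of $\theta$. The Levi term is conformally invariant as well: since $L_{\hat{\theta}} = e^u L_{\theta}$ on horizontal vectors while the inverse Levi metric scales by $e^{-u}$, one gets $|\bar{\partial}_b v|^2_{\hat{\theta}} = e^{-u}|\bar{\partial}_b v|^2_{\theta}$, and the factors $e^{\pm u}$ cancel to give $\tfrac{1}{2}|\bar{\partial}_b v|^2_{\hat{\theta}}L_{\hat{\theta}} = \tfrac{1}{2}|\bar{\partial}_b v|^2_{\theta}L_{\theta}$. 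Hence all non-Hessian contributions agree, and
\[
\mathcal{H}_{\hat{\theta}}(v) - \mathcal{H}_{\theta}(v) = \Sym\bigl(\hat{\nabla}\hat{\nabla} v - \nabla\nabla v\bigr).
\]

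On the other side I would compute the defect $\mathcal{H}_{\theta}(u+v) - \mathcal{H}_{\theta}(u) - \mathcal{H}_{\theta}(v)$. Because $\Sym\nabla\nabla$ is linear in its argument, the Hessian parts cancel identically, and polarizing the three remaining (quadratic and Levi) terms leaves only the cross terms:
\[
\mathcal{H}_{\theta}(u+v) - \mathcal{H}_{\theta}(u) - \mathcal{H}_{\theta}(v)
= -\bigl(\partial_b u\otimes\partial_b v + \partial_b v\otimes\partial_b u\bigr)
- \bigl(\bar{\partial}_b u\otimes\bar{\partial}_b v + \bar{\partial}_b v\otimes\bar{\partial}_b u\bigr)
+ \langle\bar{\partial}_b u,\bar{\partial}_b v\rangle_{\theta}\, L_{\theta},
\]
where $\langle\bar{\partial}_b u,\bar{\partial}_b v\rangle_{\theta}$ denotes the symmetric polarization of $|\bar{\partial}_b(\cdot)|^2_{\theta}$. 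Thus the proposition reduces to showing that $\Sym(\hat{\nabla}\hat{\nabla} v - \nabla\nabla v)$ equals this right-hand side.

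To prove that single identity I would write $(\hat{\nabla}\hat{\nabla} v - \nabla\nabla v)(X,Y) = -(\hat{\nabla}_X Y - \nabla_X Y)v$ and insert Lee's formulas for the pseudoconformal change of the Tanaka--Webster connection (the identities recalled in the proof of \cref{prop:conch}, in particular \eqref{e:224a}, together with their conjugates). I would then evaluate on the three type-blocks of $T^{1,0}M\oplus T^{0,1}M$. On the $(1,0)$--$(1,0)$ block the correction $\hat{\nabla}_Z W - \nabla_Z W = Z(u)W + W(u)Z$ produces exactly $-(\partial_b u\otimes\partial_b v + \partial_b v\otimes\partial_b u)$; the $(0,1)$--$(0,1)$ block is its conjugate; and on the mixed block the correction $\hat{\nabla}_Z\bar{W} - \nabla_Z\bar{W} = -\langle Z,\bar{W}\rangle_{\theta}\grad^{0,1}u$ (and its conjugate) yields, after symmetrization, $\tfrac{1}{2}\langle Z,\bar{W}\rangle_{\theta}\bigl(\grad^{1,0}u + \grad^{0,1}u\bigr)(v)$. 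Matching the three blocks gives the claim.

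The main obstacle I anticipate is the bookkeeping on the mixed $(1,0)$--$(0,1)$ block: one must confirm that $\bigl(\grad^{1,0}u + \grad^{0,1}u\bigr)(v)$ reassembles into the same contraction $g^{\alpha\bar{\beta}}(u_\alpha v_{\bar{\beta}} + v_\alpha u_{\bar{\beta}})$ that arises from polarizing $|\bar{\partial}_b(u+v)|^2_{\theta}$, and that the factor $\tfrac{1}{2}$ produced by $\Sym$ matches the one produced by polarization. Keeping the conventions for the Levi pairing $\langle\cdot,\cdot\rangle_{\theta}$ and for $\grad^{1,0}$, $\grad^{0,1}$ consistent across these steps is where the calculation is most error-prone; once the reduction to the Hessian transformation law is in place, the rest is routine. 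This argument parallels the cocycle computation for the CR Schwarzian tensor in \cite{son2018schwarzian}.
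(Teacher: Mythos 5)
Your proposal is correct and follows essentially the same route as the paper: both arguments verify the identity block-by-block on the type decomposition of the horizontal bundle, using Lee's pseudoconformal transformation formulas for the Tanaka--Webster connection, with the mixed-type cross term $\Re\langle\bar{\partial}_b u,\partial_b v\rangle\,L_{\theta}$ and the pure-type cross term $-(\partial_b u\otimes\partial_b v+\partial_b v\otimes\partial_b u)$ accounting for the difference. Your repackaging---invariance of the non-Hessian terms of $\mathcal{H}$ on horizontal vectors under $\theta\mapsto e^u\theta$, plus a polarization identity, reducing everything to the transformation law of $\Sym\nabla\nabla$---is a tidy reorganization of the same computation; the paper instead expands $\mathcal{H}_{\hat{\theta}}(v)$ directly for the mixed-type block and cites \cite{son2018schwarzian} for the pure-type block, which you verify explicitly.
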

We point out that the equality of the traceless parts of both sides was proved in \cite{son2018schwarzian}.
\begin{proof}
	We need to verify \cref{e:1cocycle} for each pair of vectors of $(1,0)$ and each pair of one $(1,0)$ and one $(0,1)$-vector. First, for any vector $Z,W$ of type $(1,0)$, we have from \cref{e:224}
	\begin{align}
		\mathcal{H}_{\hat{\theta}}(v)(Z, \Wbar)
		& =
		\Sym \widehat{\nabla} v (Z, \Wbar)  + \tfrac12|\bar{\partial}_b v|^2_{\hat{\theta}} \, \langle Z, \Wbar \rangle _{\hat{\theta}} \notag \\
		& =
		\Sym \nabla v (Z, \Wbar)  
		+ \left(\Re \langle \bar{\partial}_b{u} , \partial_b v\rangle 
		+ \tfrac12 |\bar{\partial}_b v|^2_{\theta}\right) \langle Z, \Wbar \rangle _{\theta}
	\end{align}
	Thus,
	\begin{align}
		\mathcal{H}_{\theta}(v)(Z, \Wbar)
		+
		\mathcal{H}_{\hat{\theta}}(v)(Z, \Wbar)
		& =
		\Sym \nabla u(Z, \Wbar) + \tfrac{1}{2}|\bar{\partial}_b u|^2 \langle Z, \Wbar \rangle _{\theta} \notag  \\
		& \quad + \Sym \nabla v (Z, \Wbar)  
		+ \left(\Re \langle \bar{\partial}_b{u} , \partial_b v\rangle 
		+ \tfrac12 |\bar{\partial}_b v|^2_{\theta} \right) \langle Z, \Wbar \rangle _{\theta} \notag \\
		& = \Sym \nabla (u + v)(Z, \Wbar) + \tfrac12 |\partial_b u + \partial_ b v |^2_{\theta} \langle Z, \Wbar \rangle _{\theta} \notag \\
		& = \mathcal{H}_{\theta}( u + v).
	\end{align}
	This verifies \cref{e:1cocycle} for any pair of vectors of mixed type. The identity for vectors of pure type is exactly the same as \cite{son2018schwarzian}. We omit the details.
\end{proof}
\begin{proposition} \editb{If} $(M,\theta) \subset  (N,\eta)$ is a pseudohermitian submanifold, $X, Y \in \Gamma (T^{1,0} \oplus T^{0,1} M)$, and $u \in \mathcal{C}^{2}(N)$, then
\begin{align}
	\mathcal{H}_{\theta} (u|_M)(X,Y) - \mathcal{H}_{\eta}(u)(X,Y) 
	=
	(du)(\Sym \sff (X,Y)) - \tfrac{1}{2} \left|(\grad_{N,M}^{1,0} u )^{\perp}\right|^2 \langle X, Y \rangle _{\theta}.
\end{align}
\end{proposition}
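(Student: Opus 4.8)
The plan is to expand both tensors via the definition
\[\mathcal{H}_{\theta}(u) = \Sym \nabla\nabla u - \partial_b u \otimes \partial_b u - \bar{\partial}_b u \otimes \bar{\partial}_b u + \tfrac{1}{2}|\bar{\partial}_b u|^2 L_{\theta}\]
and to compare the four constituent terms on a pair $X, Y$ of horizontal vectors tangent to $M$. The organizing observation is that, because $\theta = \iota^{\ast}\eta$, essentially all the pointwise metric data of $M$ and $N$ agree on $T^{1,0}M \oplus T^{0,1}M$: the Levi forms satisfy $L_{\theta}(X,Y) = L_{\eta}(X,Y) = \langle X, Y\rangle_{\theta}$, and the restrictions of $\partial_b u$ and $\bar{\partial}_b u$ to tangent vectors coincide with $\partial_b(u|_M)$ and $\bar{\partial}_b(u|_M)$ (all three send a tangent $Z$ to $Zu$). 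Hence the two quadratic terms $\partial_b u \otimes \partial_b u$ and $\bar{\partial}_b u \otimes \bar{\partial}_b u$ contribute identically to $\mathcal{H}_{\theta}(u|_M)$ and $\mathcal{H}_{\eta}(u)$ and drop out of the difference. Only the covariant Hessian and the scalar $|\bar{\partial}_b u|^2$ produce genuine discrepancies.

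First I would treat the Hessian. Writing $\nabla\nabla u(X,Y) = X(Yu) - (\nabla_X Y)u$ and the ambient analogue $\widetilde{\nabla}\widetilde{\nabla} u(X,Y) = X(Yu) - (\widetilde{\nabla}_X Y)u$, the first-order terms cancel, and the Gauß formula \eqref{e:sffdef}, $\widetilde{\nabla}_X Y - \nabla_X Y = \sff(X,Y)$, gives
\[\nabla\nabla u(X,Y) - \widetilde{\nabla}\widetilde{\nabla} u(X,Y) = (\widetilde{\nabla}_X Y - \nabla_X Y)u = (du)\big(\sff(X,Y)\big).\]
Symmetrizing in $X, Y$ converts this into $(du)(\Sym\sff(X,Y))$, which is precisely the first term on the right-hand side of the asserted identity. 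Note that $du$ is the ambient differential on $N$, so it legitimately sees the normal and transverse components of $\sff(X,Y)$ (including the possible $(T-\widetilde{T})$-part coming from \eqref{e:ssf4}); no tangency of $\sff(X,Y)$ is needed.

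Next I would compare the scalars $|\bar{\partial}_b u|^2$. On $M$ this is the squared Levi norm of the tangential horizontal gradient $\grad^{1,0}_{N,M} u$, while on $N$ it is the squared norm of $\grad^{1,0}_N u = \grad^{1,0}_{N,M} u + (\grad^{1,0}_{N,M} u)^{\perp}$. Since this decomposition is Levi-orthogonal, the Pythagorean identity yields
\[|\bar{\partial}_b u|_{\eta}^2 - |\bar{\partial}_b (u|_M)|_{\theta}^2 = \big|(\grad^{1,0}_{N,M} u)^{\perp}\big|^2.\]
Therefore the term $\tfrac12|\bar{\partial}_b u|^2 L$ (equal to $\tfrac12|\bar{\partial}_b u|^2\langle X,Y\rangle_\theta$ on both $M$ and $N$, since $L_\theta$ and $L_\eta$ agree here) contributes $-\tfrac12|(\grad^{1,0}_{N,M} u)^{\perp}|^2\langle X, Y\rangle_{\theta}$ to $\mathcal{H}_{\theta}(u|_M) - \mathcal{H}_{\eta}(u)$. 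Combining this with the Hessian contribution gives the stated formula.

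The computation is largely bookkeeping, so the places that demand care are the two cancellations and the orthogonal splitting. I expect the main (minor) obstacle to be keeping the tangential, normal, and transverse components straight when evaluating $du$ on $\Sym\sff(X,Y)$, and in confirming that restriction genuinely commutes with the $(1,0)$/$(0,1)$ projections on tangent directions so that the two quadratic terms and the two Levi forms really do cancel; both hinge on $\theta = \iota^{\ast}\eta$.
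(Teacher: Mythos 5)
Your proposal is correct and follows essentially the same route as the paper's proof: the Gauß formula gives $\nabla^2(u|_M)(X,Y)-\widetilde{\nabla}^2 u(X,Y)=(du)(\sff(X,Y))$, the quadratic gradient terms cancel since $\partial_b u$ and $\bar{\partial}_b u$ restrict to $\partial_b(u|_M)$ and $\bar{\partial}_b(u|_M)$ on horizontal tangent vectors, and the orthogonal splitting $\grad^{1,0}_N u = \grad^{1,0}_{N,M}u + (\grad^{1,0}_{N,M}u)^{\perp}$ accounts for the remaining scalar term. The paper merely compresses these steps; your added care about restrictions commuting with the $(1,0)/(0,1)$ projections is sound but implicit there.
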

\begin{proof}
On $M$, we have 
\begin{equation}
	\nabla^2 (u|_M)(X,Y)
	=
	X(Y(u)) - d(u|_M)(\nabla_XY),
\end{equation}
and similarly for $\widetilde{\nabla}^{2} u(X,Y)$ on $N$. Thus,
\begin{equation}
	\nabla^{2} u(X,Y) - \widetilde{\nabla}^{2} u(X,Y) 
	=
	(du)(\sff(X,Y)).
\end{equation}
Consequently,
\begin{align}
	\mathcal{H}_{\theta}(u|_M)(X,Y) - \mathcal{H}_{\eta}(u)(X,Y) 
	& = \tfrac{1}{2} (du)(\sff(X,Y) + \sff (Y,X)) \notag \\
	& \quad  +\tfrac{1}{2}\left(|\partial_b (u|_M)|^2 - |\partial_ b u|^2 \right) \langle X, Y \rangle_{\theta} \notag \\
	& =
	(du)(\Sym \sff(X,Y)) - \tfrac{1}{2} |(\grad_{N,M}^{1,0} u )^{\perp}|^2 \langle X, Y \rangle _{\theta}. \qedhere 
\end{align}
\end{proof}
\begin{proposition}\label{prop:e}
\editb{For any}  tower of pseudohermitian submanifolds $M\subset N \subset (P,\eta)$ \editb{ it holds that }
\begin{align}\label{e:mean}
	\mu_M^{P}
	& =
	\mu_M^N + \mu_N^P,  \\ \label{e:nu}
	\nu_M^P - \nu_M^N
	& =
	(\iota_M^N)^{\ast} \nu_N^P.
\end{align}
\end{proposition}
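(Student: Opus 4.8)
The plan is to reduce both identities to a single structural fact: the second fundamental form is \emph{additive} along the tower. Write $\nabla^M, \nabla^N, \widetilde{\nabla}^P$ for the Tanaka--Webster connections of the pseudohermitian structures induced from $\eta$ by restriction, so that $\theta_M = (\iota_M^N)^{\ast}\theta_N$ and $M\subset N$ is itself a pseudohermitian submanifold. For $X,Y \in \Gamma(T^{1,0}M\oplus T^{0,1}M)$ the connection differences telescope,
\[
\widetilde{\nabla}^P_X Y - \nabla^M_X Y = \bigl(\widetilde{\nabla}^P_X Y - \nabla^N_X Y\bigr) + \bigl(\nabla^N_X Y - \nabla^M_X Y\bigr),
\]
which by \eqref{e:sffdef} reads $\sff_M^P(X,Y) = (\iota_M^N)^{\ast}\sff_N^P(X,Y) + \sff_M^N(X,Y)$. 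Here $\sff_N^P$ is evaluated on vectors tangent to $M$, which are tangent to $N$, so this is legitimate. Symmetrizing gives the same decomposition for $\Sym\sff$.

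For \eqref{e:mean} I would trace this identity over an orthonormal frame $\{Z_{\alpha}\}$ of $T^{1,0}M$. The $\sff_M^N$ term contributes $H_M^N$ by definition. For the $\sff_N^P$ term the key point is \eqref{e:s2} applied to $N\subset P$ (together with the reality \eqref{e:ssf2}): since each $Z_{\alpha}$ is tangent to $N$, one gets $\sff_N^P(Z_{\abar},Z_{\alpha}) = \langle Z_{\abar},Z_{\alpha}\rangle H_N^P = H_N^P$, so the partial trace over the $n$ directions of $M$ still recovers the full mean curvature $H_N^P$. Hence $H_M^P = H_M^N + H_N^P$, and taking real parts yields \eqref{e:mean}.

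For \eqref{e:nu} I would substitute the additive decompositions of $\Sym\sff$ and of $\mu$ into the definition of $\nu_M^P$ and exploit orthogonality. The two curvature contributions live in complementary normal bundles: by \cref{prop:ttprime}, $\mu_M^N$ and $\Sym\sff_M^N$ take values in $N^{1,0}M\oplus N^{0,1}M$ and are therefore tangent to $N$, whereas $\mu_N^P$ and $\Sym\sff_N^P$ take values in $N^{1,0}N\oplus N^{0,1}N$ and are orthogonal to everything tangent to $N$. Consequently all cross terms vanish, $\langle \Sym\sff_M^N, \mu_N^P\rangle = 0$ and $\langle \Sym\sff_N^P, \mu_M^N\rangle = 0$, and $|\mu_M^P|^2 = |\mu_M^N|^2 + |\mu_N^P|^2$. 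Expanding $\nu_M^P(X,Y) = 2\langle \Sym\sff_M^P(X,Y),\mu_M^P\rangle - \langle X,Y\rangle|\mu_M^P|^2$, the terms involving only $\mu_M^N$ reassemble into $\nu_M^N(X,Y)$ while those involving only $\mu_N^P$ reassemble into $(\iota_M^N)^{\ast}\nu_N^P(X,Y)$, giving the claim.

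The only genuinely delicate point is the trace step for \eqref{e:mean}: a priori $\mu_N^P$ is a trace over $T^{1,0}N$ while the restricted frame only spans $T^{1,0}M$, so one must check that tracing $\sff_N^P$ over the smaller space still reproduces the full mean curvature of $N$ in $P$. This is exactly what \eqref{e:s2} guarantees, since it forces the mixed second fundamental form to be a multiple of the Levi form and hence direction-independent on its diagonal. Everything else is bookkeeping with the orthogonal decomposition of the normal bundle of $M$ in $P$ into $N^{1,0}M\oplus N^{0,1}M$ and the restriction to $M$ of the normal bundle of $N$ in $P$.
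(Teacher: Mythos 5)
Your proof is correct and follows essentially the same route as the paper's: both rest on the additivity $\sff_M^P = \sff_M^N + \sff_N^P$ obtained by comparing the three Tanaka--Webster connections, combined with the umbilicity relation \cref{e:s2} to identify the mean curvatures (the paper simply evaluates the resulting identity at a single vector $Z = W \neq 0$ rather than tracing over a frame), and then substitute into the definition of $\nu$. The orthogonality of the two normal-bundle contributions, which you spell out, is exactly what the paper's terse ``plugging this into the definition of $\nu$'' step uses implicitly.
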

\begin{proof}
	For $Z\in T^{1,0}M$ and $\Wbar \in T^{0,1}M$, we have that
	\begin{equation}
		\langle \Zbar , W \rangle H_M^P
		=
		\sff_M^P(\Zbar, W) 
		= 
		\sff_N^P (\Zbar,W) + \sff_M^N(\Zbar,W)
		=
		\langle Z , \Wbar \rangle (H_N^P + H_M^N)
	\end{equation}
	Taking $Z = W \ne 0$, we immediately obtain $H_M^P = H_M^N + H_N^P$ and hence \cref{e:mean} follows. Plugging this into the definition of $\nu$, we obtain \cref{e:nu}. The proof is complete.
\end{proof}
\begin{proof}[Proof of \cref{thm:crintro}] The idea of the proof is essentially the same as in \cite{stowe2015ahlfors} and based on the calculations above. The formula in the  CR case turns out to be simpler than its conformal counterpart because of \cref{prop:e} above. Indeed,  assume that $F \colon (M,\theta) \to (N,\eta)$ and $G\colon (N,\eta) \to (P,\zeta)$. The chain rule for $\ahlfors$ is equivalent to the analogous assertions for $\ahlfors'$ where
\begin{equation}
    \mathcal{A}' \colon F \mapsto \mathcal{H}_{\theta} (u) + F^{\ast} \left(\nu_{F(M)}^N\right).
\end{equation}
In the equidimensional case, this and \cref{e:chainrule} reduce to an analogous statement for \edit{the} CR Schwarzian that was proved in  \cite{son2018schwarzian}.

Next, we \editb{assume that} $P$ and $N$ have the same dimension. In this case, we can suppose that $P$ is the manifold $N$ with a 
pseudohermitian structure $\zeta = e^{v} \eta$ and $G$ is the identity map. Since $e^u\theta = F^{\ast} \eta$, by \cref{prop32},
\begin{align*}
    \mathcal{H}_{\theta} (u + v \circ F)
    & =
    \mathcal{H}_{\theta}(u) + \mathcal{H}_{e^{u} \theta} (v \circ F) \\
    & =
    \mathcal{H}_{\theta}(u) + F^{\ast}\mathcal{H}_{(F(M),\iota^{\ast} \eta)} (v).
\end{align*}
Therefore,
\begin{align}
    \ahlfors'(G \circ F) - \ahlfors'(F) - F^{\ast} \ahlfors'(G)
    & = \mathcal{H}_{\theta}(u + v\circ F) + F^{\ast} \left(\nu_{F(M)}^{(N, e^v \eta)}\right) \\
    & \quad - \left(\mathcal{H}_{\theta}(u) + F^{\ast} \left(\nu_{F(M)}^{(N,\eta)}\right)\right) - F^{\ast} \left(\mathcal{H}_{\eta}(v)\right)\notag \\
    & = F^{\ast} \left(\mathcal{H}_{(F(M),\iota^{\ast} \eta)} (v) - \mathcal{H}_{\eta} (v) + \nu_{F(M)}^{(N, e^v \eta)} - \nu_{F(M)}^{(N,\eta)}\right) \notag \\
    & = 0.
\end{align}
To conclude the proof, we consider the general case when $G$ is a CR immersion, $G^{\ast} \zeta = e^{v}\eta$. We have
\begin{align}
	\mathcal{A}'(G\circ F) - \mathcal{A}'(F) - F^{\ast} (\mathcal{A}'(G))
	& =
	\mathcal{H}_{\theta}(u + v \circ F) + (G\circ F)^{\ast}(\nu_{F(G(M))}^P) \notag \\
	& \quad - \left(\mathcal{H}_{\theta}(u) + F^{\ast}(\nu_{F(M)}^N)\right) 
	- F^{\ast}\left(\mathcal{H}_{\eta}(v) + G^{\ast}(\nu_{G(N)}^P\right) \notag \\
	& = (G\circ F)^{\ast}\left(\nu_{G(F(M))}^P - \nu_{G(N)}^P\right) - F^{\ast} (\nu_{F(M)}^N) \notag  \\
	& = 0.
\end{align}
At the last step, we have used \cref{prop:e}. The proof is complete.
\end{proof}

\section{CR maps with vanishing Ahlfors derivative}
It is well-known that the Schwarzian derivative of a conformal diffeomorphism measures the change of the traceless component of the Ricci tensor, see \cite{osgood1992schwarzian}. The same is true for the CR analogue of the Schwarzian \cite{son2018schwarzian}. The case of immersions is a bit different due to the presence of the second fundamental form. We point out that \edit{in the CR case a certain} part of the Ahlfors \edit{derivative} actually measures how the pseudohermitian torsion changes when going from the original structure to the pull-back.

We denote $\ahlfors_{\alpha\beta}(F) = \ahlfors(F)(Z_{\alpha}, Z_{\beta})$ the ``holomorphic''  components and $\ahlfors_{\alpha\bbar}(F) = \ahlfors(F)(Z_{\alpha}, Z_{\bbar})$ the ``mixed type'' components of the Ahlfors \edit{derivative}. \editc{As usual, $h_{\alpha\bbar}$ denotes the Levi metric on $(M,\theta)$, i.e. $h_{\alpha\bbar} = \langle Z_{\alpha} , Z_{\bbar}\rangle = -i d\theta(Z_{\alpha}, \overline{Z_{\beta}})$, and similarly, $g_{A\bar{B}}$ denotes the one for $(N,\eta)$.}
\begin{proposition} Let $F \colon (M,\theta) \to (N,\eta)$ be a CR immersion. Let $A_{\alpha\beta}$ be the \edit{pseudohermitian} torsion of $(M,\theta)$ and $\widetilde{A}_{AB}$ the pseudohermitian torsion of $(N,\eta)$. Also let $R_{\alpha\bbar}$ be the Ricci tensor of the Tanaka-Webster connection associated to $\theta$. In an adapted coframe \edit{the following hold\editb{:}}
\begin{enumerate}[\rm (i)]
	\item The ``holomorphic'' components of the Ahlfors \edit{are}
	\begin{equation}\label{e:holomorphicpart}
	\ahlfors_{\alpha\beta}(F)
	=
	-i(\widetilde{A}_{\alpha\beta} - A_{\alpha\beta}).
	\end{equation}
	\item The tracefree components of \editb{mixed type} \edit{are}
	\begin{equation}
		\tf \ahlfors_{\alpha\bbar} (F) 
		=
		\tf R_{\alpha\bbar} - \tf \hat{R}_{\alpha\bbar},
	\end{equation}
	where $\hat{R}_{\alpha\bbar}$ is the Ricci tensor associated to $\hat{\theta} = F^{\ast}\eta$.
	\item If $(N,\eta)$ is the sphere with its standard pseudohermitian structure, then
	\begin{equation}\label{e:traceahlfors}
		\trace_{\theta}\ahlfors(F):= h^{\alpha\bbar} \ahlfors_{\alpha\bbar}(F)
		=
		\tfrac{1}{2(n+1)} e^u |\sff^{2,0}|^2,
	\end{equation}
	where $|\sff^{2,0}|^2$ is the \edit{squared norm} of the CR second fundamental form. 
\end{enumerate}
\end{proposition}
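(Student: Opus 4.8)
My plan is to prove all three parts by decomposing
$\ahlfors(F) = \mathcal{H}_{\theta}(u) + F^{\ast}\nu^{N}_{F(M)} + \tfrac12 F^{\ast}(J_{\Theta}L_{\Theta}) - \tfrac12 J_{\theta}L_{\theta}$
and, in each bidegree, isolating which of the four summands survive. Throughout I write $\hat\theta = F^{\ast}\eta = e^{u}\theta$ and let $\hat A_{\alpha\beta}$, $\hat R_{\alpha\bar\beta}$ denote the torsion and Webster--Ricci of $(M,\hat\theta)$; the frame $\{Z_\alpha\}$ is admissible for both $\theta$ and $\hat\theta$, as they share the same underlying CR structure, so the inclusion $(M,\hat\theta)\hookrightarrow(N,\eta)$ is isopseudohermitian and the results of Sections 2--3 apply to it verbatim.

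For (i) I would work in pure holomorphic bidegree. There the two Levi-form terms drop out, since $L_\theta$ and $L_\Theta$ are Hermitian and carry no $(2,0)$-part, and likewise the $\bar\partial_b u\otimes\bar\partial_b u$ and $\tfrac12|\bar\partial_b u|^2 L_\theta$ pieces of $\mathcal{H}_\theta(u)$ are of type $(0,2)$ or pure trace; this leaves $\mathcal{H}_\theta(u)_{\alpha\beta} = u_{\alpha\beta} - u_\alpha u_\beta$. Lee's conformal transformation of the pseudohermitian torsion under $\hat\theta = e^{u}\theta$ gives $\hat A_{\alpha\beta} - A_{\alpha\beta} = i(u_{\alpha\beta} - u_\alpha u_\beta)$, hence $\mathcal{H}_\theta(u)_{\alpha\beta} = -i(\hat A_{\alpha\beta} - A_{\alpha\beta})$. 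On the other hand, in holomorphic bidegree the $\langle X,Y\rangle|\mu|^2$ term of $\nu$ vanishes and $\Sym\sff = \sff$; since $\mu = \Re H$ and the pairing of two $(1,0)$-vectors is zero, one gets $\nu_{\alpha\beta} = \langle \sff(Z_\alpha,Z_\beta),\overline{H}\rangle$. Feeding the torsion Gauß equation \eqref{e:tor} for $(M,\hat\theta)\hookrightarrow(N,\eta)$ then yields $\widetilde A_{\alpha\beta} - \hat A_{\alpha\beta} = i\langle \sff(Z_\alpha,Z_\beta),\overline{H}\rangle = i\nu_{\alpha\beta}$, i.e.\ $\nu_{\alpha\beta} = -i(\widetilde A_{\alpha\beta} - \hat A_{\alpha\beta})$. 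The point is that on adding the two contributions the intrinsic pull-back torsion $\hat A_{\alpha\beta}$ telescopes away, leaving $\ahlfors_{\alpha\beta}(F) = -i(\widetilde A_{\alpha\beta} - A_{\alpha\beta})$.

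For (ii) I pass to the mixed bidegree. Here $\nu_{\alpha\bar\beta} = \tfrac12|H|^2 h_{\alpha\bar\beta}$, while $F^{\ast}(J_\Theta L_\Theta)_{\alpha\bar\beta} = e^{u}J_\Theta\, h_{\alpha\bar\beta}$ and $J_\theta L_\theta$ are pure multiples of the Levi metric; all three are annihilated by the tracefree projection $\tf$. Thus $\tf\ahlfors_{\alpha\bar\beta}(F) = \tf\mathcal{H}_\theta(u)_{\alpha\bar\beta}$, and since the $\tfrac12|\bar\partial_b u|^2 L_\theta$ term is again pure trace this equals the tracefree symmetrized Hessian $\tf(\Sym\nabla\nabla u)_{\alpha\bar\beta}$. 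This is exactly the mixed tracefree content of the CR Schwarzian $B_\theta(\tfrac12 u)$, which by the computation in \cite{son2018schwarzian} measures the change of the tracefree Webster--Ricci tensor under $\hat\theta = e^{u}\theta$; that is, $\tf(\Sym\nabla\nabla u)_{\alpha\bar\beta} = \tf R_{\alpha\bar\beta} - \tf\hat R_{\alpha\bar\beta}$, which is the claim. Note that no second fundamental form enters here, consistent with the tracefree mixed part being intrinsic to $(M,\theta)$ and $(M,\hat\theta)$.

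For (iii) I compute the full trace $h^{\alpha\bar\beta}\ahlfors_{\alpha\bar\beta}(F)$. The tensor $\nu$ contributes $\tfrac{n}{2}|H|^2$, the Levi terms contribute $\tfrac{n}{2}(e^{u}J_\Theta - J_\theta)$, and $\mathcal{H}_\theta(u)$ contributes an expression in $\Delta_b u$ and $|\bar\partial_b u|^2$. Using the conformal transformation law of the Webster scalar curvature (the CR Yamabe identity) under $\hat\theta = e^{u}\theta$, I would rewrite that last expression, together with $-\tfrac{n}{2}J_\theta$, in terms of the normalized scalar curvature $\hat J = \hat R/(n(n+1))$ of the pull-back structure. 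It then remains to eliminate $\hat R$: here I specialize to $N=\mathbb{S}^{2d+1}$ and invoke the doubly traced Gauß equation \eqref{e:gauss}, using that the standard sphere has vanishing torsion and constant model Webster curvature. This expresses $\hat R$ through the constant $R_\Theta$, the norm $|\sff^{2,0}|^2$, and $|H|^2$; the $|H|^2$ terms cancel the contribution of $\nu$, the scalar-curvature constants cancel the $J_\Theta$ term, and what survives is $\trace_\theta\ahlfors(F) = \tfrac{1}{2(n+1)}e^{u}|\sff^{2,0}|^2$. The hard part is exactly this final bookkeeping: one must track the conformal weights (the $e^{u}$ factors) relating the Levi metric of $\theta$ to that of $\hat\theta = F^{\ast}\Theta$ through both the scalar-curvature transformation and the Gauß equation, and verify that all curvature and mean-curvature contributions cancel so as to leave only the $(2,0)$ second fundamental form with the stated constant.
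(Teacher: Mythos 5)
Your proposal is correct and follows essentially the same route as the paper's own proof: part (i) by telescoping Lee's torsion transformation law against the torsion Gauß equation through the intermediate torsion $\hat{A}_{\alpha\beta}$ of $\hat{\theta}=F^{\ast}\eta$, part (ii) by observing that all non-Hessian mixed-type terms are pure trace and invoking the conformal change of the Webster--Ricci tensor, and part (iii) by combining the doubly traced Gauß equation for a spherical target with the transformation law $e^{u}J_{\hat{\theta}} = J_{\theta} + \tfrac{1}{n}\Delta_b u - |\bar{\partial}_b u|^2$. The only differences are cosmetic: you cite \cite{son2018schwarzian} where the paper cites \cite{lee1986fefferman} in (ii), and the $e^{u}$-weight bookkeeping you defer in (iii) is precisely the short explicit computation the paper carries out to conclude.
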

\begin{remark}
	This proposition exhibits some new features of the CR analogue of the Ahlfors derivative compared to its conformal counterpart. Observe that (1) and (2) generalize similar formulas for the CR Schwarzian in \cite{son2018schwarzian} while (3) says that the Ahlfors is traceless if and only if $F$ has vanishing CR second fundamental form.
	\edit{Moreover, part (iii) also holds when $(N,\eta)$ has the Tanaka-Webster curvature tensor of the form
		\[
		R_{A\bar{B}C\bar{D}}
		=
		c\left(g_{A\bar{B}} g_{C\bar{D}} + g_{A\bar{D}} g_{C\bar{B}}\right), \ c\in \mathbb{R}.
		\]
	In particular, it holds when $(N,\theta)$ is the Heisenberg hypersurface with \editb{a} pseudohermitian structure of vanishing curvature and torsion.}
\end{remark}
\begin{proof} We identify $M$ with $F(M)$ and consider $\iota \colon (M,\hat{\theta}) \to (N,\eta)$ as \editb{the inclusion of} a submanifold and thus $\hat{\theta}: = \iota^{\ast}\eta =  e^u\theta$ for some smooth function~$u$. Let $\hat{A}_{\alpha\beta}$ be the pseudohermitian torsion of $\hat{\theta}$ on $M$. By the Gauß equation, we have
\begin{equation}
\widetilde{A}_{\alpha\beta} = \hat{A}_{\alpha\beta} + i\, \omega_{\alpha\beta}^a H^{\bar{b}} g_{a\bar{b}}.
\end{equation}
On the other hand, $A_{\alpha\beta} = \hat{A}_{\alpha\beta} - i(u_{\alpha,\beta} - u_{\alpha} u_{\beta})$, by \cite{lee1986fefferman}. \edit{Here the indices preceded by a comma indicate covariant derivatives.} Therefore,
\begin{align}
\ahlfors_{\alpha\beta}(F)
&=
u_{\alpha,\beta} - u_{\alpha} u_{\beta} + \omega_{\alpha\beta}^a \xi^{\bar{b}} g_{a\bar{b}} \notag \\
&=
u_{\alpha,\beta} - u_{\alpha} u_{\beta} -i(\widetilde{A}_{\alpha\beta} - \hat{A}_{\alpha\beta}) \notag \\
&=
-i(\widetilde{A}_{\alpha\beta} - A_{\alpha\beta}).
\end{align}
Thus, (1) is proved.

From the definition, the \editb{mixed type} components are given by
\begin{equation}\label{e:tracepart}
\ahlfors_{\alpha\bbar} (F)
=
\tfrac{1}{2}\left(u_{\alpha,\bbar} + u_{\bbar,\alpha}\right) +  \tfrac{1}{2} \left(|\bar{\partial}_b u|^2 + e^u |H_{F(M)}|^2 \circ F + e^u J_{\eta} \circ F  - J_{\theta}\right) h_{\alpha\bbar}.
\end{equation}
Therefore, using \cite{lee1986fefferman}, we obtain
\begin{equation}
	\tf \ahlfors_{\alpha\bbar} (F) 
	=
	\tfrac{1}{2}\left(u_{\alpha,\bbar} + u_{\bbar,\alpha}\right) + \frac{1}{2n} \Delta_b u\, h_{\alpha\bbar}
	=
	\tf R_{\alpha\bbar} - \tf \hat{R}_{\alpha\bbar},
\end{equation}
where $\hat{R}_{\alpha\bbar}$ is the Ricci curvature of $\hat{\theta} = e^u \theta = F^{\ast}\eta$.

Suppose that $(N,\eta)$ is a sphere with the standard pseudohermitian structure, then \edit{the Gauß equation} implies that
\begin{equation}
	J_{\eta} \circ F
	=
	J_{\hat{\theta}} 
	- |H_{F(M)}|^2 \circ F + \frac{1}{n(n+1)}|\sff_{F(M)}^{0,2}|^2 \circ F.
\end{equation}
On the other hand,
\begin{equation}
	e^u J_{\hat{\theta}}
	=
	J_{\theta} +\tfrac{1}{n} \Delta_b u - |\bar{\partial}_b u|^2.
\end{equation}
Putting them together, we obtain that
\begin{equation}
	e^uJ_{\eta} \circ F - J_{\theta} + |\bar{\partial}_b u|^2 + e^u |H_{F(M)}|^2 \circ F -\frac{1}{n} \Delta_b u = \frac{1}{n(n+1)} e^u |\sff^{2,0}_{F(M)}|^2 \circ F.
\end{equation}
\editb{It thus holds that}
\begin{equation}\label{e:traceahlfors}
	\trace_{\theta}{\ahlfors(F)}
	=
	h^{\alpha\bbar} \ahlfors_{\alpha\bbar}(F)
	=
	\frac{1}{2(n+1)} e^u |\sff^{2,0}|^2, 
\end{equation}
\editb{and the} proof is complete.
\end{proof}
\begin{definition}
	Let $\iota \colon (M,\theta) \hookrightarrow (N,\eta)$ be a pseudohermitian submanifold, $\iota^{\ast} \eta = \theta$. We say that \editb{$\iota$ (or $M$ if the embedding is understood)} is umbilic at $p$ if $\sff_M^N(p)$ is a multiple of the Levi metric, i.e., $\sff(Z,W) = 0$ for all $(1,0)$ vectors $Z$ and $W$ at~$p$.
\end{definition}
We remark that \edit{the pseudohermitian total umbilicity does not depend on the choice of pseudohermitian structures}. In fact, if $\iota$ is pseudohermitian umbilic, then it is CR totally geodesic for any admissible pair of contact forms. This \editb{holds} because the CR second fundamental form (as defined in \cite{ebenfelt2004rigidity}) associated to any admissible pair of pseudohermitian structures coincides with the ``holomorphic'' components of the pseudohermitian second fundamental form.
\begin{corollary}
	Let $(M,\theta)$ be a pseudohermitian manifold and \editb{assume that} $F \colon (M,\theta) \to (\mathbb{S}^{2N+1},\Theta)$ a CR immersion into a sphere. Then $\trace\ahlfors(F)\geq 0$, \editb{and} equality holds at \editb{a point} $p\in M$ iff $(M,\theta)$ is umbilic at~$p$. If $\trace\ahlfors(F) = 0$ identically, then $M$ is CR spherical and $F$ is spherically equivalent to the linear embedding. 
\end{corollary}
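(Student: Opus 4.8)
The plan is to read the entire statement off the trace identity (iii) of the preceding proposition, namely
\[
\trace_{\theta}\ahlfors(F) = h^{\alpha\bbar}\ahlfors_{\alpha\bbar}(F) = \tfrac{1}{2(n+1)}\, e^{u}\,|\sff^{2,0}|^2 .
\]
Here $e^{u}>0$ pointwise, while $|\sff^{2,0}|^2$ is a squared Levi-norm taken with the positive-definite Levi metric of the target sphere, hence nonnegative; this gives $\trace_{\theta}\ahlfors(F)\ge 0$ at once and shows that equality holds at $p$ exactly when $|\sff^{2,0}|^2(p)=0$. Since $\sff^{2,0}$ is precisely the array of components $\omega_{\alpha\beta}^{a}$ determined by $\sff(Z_{\alpha},Z_{\beta})=\omega_{\alpha\beta}^{a}Z_{a}$, its norm vanishes at $p$ if and only if $\sff(Z,W)=0$ for all $(1,0)$ vectors $Z,W$ at $p$, which is exactly the definition of $M$ being umbilic at $p$. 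This disposes of the first two assertions with no further computation.

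For the final assertion I would argue as follows. If $\trace_{\theta}\ahlfors(F)\equiv 0$, then $|\sff^{2,0}|^2\equiv 0$, so $M$ is umbilic at every point; equivalently, by the remark following the definition of umbilicity, the CR second fundamental form (in the sense of \cite{ebenfelt2004rigidity}) vanishes identically and $F$ is CR totally geodesic. I would emphasize that this hypothesis is genuinely weaker than $\ahlfors(F)=0$: umbilicity controls only the holomorphic components $\sff^{2,0}$ and says nothing directly about the mean curvature $H$ or the torsion, so one cannot simply feed parts (i)--(ii) back in to force $\ahlfors(F)=0$. The point is that the mean curvature is an extrinsic pseudohermitian quantity attached to the (possibly non-admissible) pair $(F^{\ast}\Theta,\Theta)$ and is irrelevant to the CR conclusion.

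The conclusion is therefore a rigidity statement rather than a calculation. A CR totally geodesic immersion of a strictly pseudoconvex CR manifold into the standard sphere has image an open piece of a linearly embedded sphere; invoking \cite{ebenfelt2004rigidity} when $\dim_{\mathbb{R}}M\ge 5$ and \cite{son2019semi} when $\dim_{\mathbb{R}}M=3$ (exactly as in the Remark after \cref{cor:se}), one concludes that $M$ is CR spherical and that, after composing with suitable CR automorphisms of the source and the target, $F$ becomes the linear embedding, i.e.\ $F$ is spherically equivalent to it. Thus the positivity and the equality case are formal consequences of (iii), and the only substantive input — the step I expect to be the main obstacle — is this classification of CR totally geodesic submanifolds of the sphere, which is supplied by the cited results; the one detail worth verifying is that those references apply to an arbitrary strictly pseudoconvex source $(M,\theta)$ rather than only to sphere-to-sphere maps.
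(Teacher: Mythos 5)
Your handling of the first two assertions is exactly the paper's: both read nonnegativity and the pointwise equality case directly off the trace identity $\trace_{\theta}\ahlfors(F)=\tfrac{1}{2(n+1)}e^{u}|\sff^{2,0}|^2$, and your identification of $|\sff^{2,0}|^2(p)=0$ with umbilicity at $p$ (via the components $\omega_{\alpha\beta}^{a}$) is correct and complete. Your side remark that trace-vanishing is genuinely weaker than $\ahlfors(F)=0$ is also accurate.

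The gap is in the final step. From $\trace\ahlfors(F)\equiv 0$ you correctly conclude that the CR second fundamental form vanishes identically, but you then attribute the \emph{entire} conclusion --- $M$ CR spherical \emph{and} $F$ spherically equivalent to the linear embedding --- to \cite{ebenfelt2004rigidity} (for $\dim_{\mathbb{R}}M\ge 5$) and \cite{son2019semi} (for $\dim_{\mathbb{R}}M=3$), ``exactly as in the Remark after \cref{cor:se}.'' That Remark asserts only the \emph{intrinsic} statement: vanishing CR second fundamental form forces $M$ to be CR spherical. The \emph{extrinsic} statement is logically separate: knowing that $M$ is locally CR equivalent to a sphere does not by itself place the image $F(M)$ inside a complex-linear section of $\mathbb{S}^{2N+1}$, which is what spherical equivalence to the linear embedding requires. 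The paper supplies this second step from a different source, namely the flatness theorem of Ji--Yuan \cite{ji2010flatness} for $\dim M\ge 5$ (and again \cite{son2019semi} in dimension three). As written, your proof rests the main rigidity claim on a reference that, at least as used in this paper, yields only sphericity; the repair is simply to invoke \cite{ji2010flatness} for the higher-dimensional case. Your closing caveat --- whether the cited results apply to an arbitrary strictly pseudoconvex source rather than only to sphere-to-sphere maps --- is the right concern, and it is resolved precisely because \cite{ji2010flatness} is a statement about CR submanifolds of the sphere (applied here to $F(M)$), not about maps defined on a sphere.
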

\begin{proof}
	That $\trace \ahlfors(F) \geq 0$ follows directly from \cref{e:traceahlfors}. The equality occurs precisely when the CR second fundamental form vanishes. This implies that $M$ must be CR spherical by \cite{ebenfelt2004rigidity} for $n\geq 2$ and \cite{son2019semi} for the case $n=1$. Moreover, $F$ is equivalent to the linear embedding by \cite{ji2010flatness} for the case $\dim M \geq 5$ and \cite{son2019semi} for the three-dimensional case.
\end{proof}
\section{Maps from three-dimensional CR manifolds into spheres}

Suppose that $(N, \eta) = (\mathbb{S}, \Theta)$ where $\Theta$ is the standard pseudohermitian structure \edit{on the sphere} \editb{$\mathbb{S} := \mathbb{S}^{2N+1}$} . If $F = \phi \circ G$ for some CR automorphism $\phi$ of the target sphere, then $\ahlfors(F) = \ahlfors(G)$ by \cref{thm:crintro} and, in particular, $\ahlfors^{\theta}_{\alpha\bbar}(F) = \ahlfors^{\theta}_{\alpha\bbar}(G)$. This last equality implies that $u_{\alpha,\bbar} = v h_{\alpha\bbar}$ for some function $v$. When $n\geq 2$, \edit{using Lee's characterization of CR-pluriharmonicity \cite{lee1988pseudo}, we \editb{can infer} from the last condition that $u$ must be CR-pluriharmonic}. This argument does not work in the case $n=1$. We shall deduce the CR-pluriharmonicity from a simpler argument which also works in the case $n=1$ as follows.

\begin{proposition}\label{prop:crph}
    Let $M$ be a strictly pseudoconvex CR manifold and  assume that \editb{$F$ and $G$ are CR immersions from $M$ into the sphere $\mathbb{S}$}. Let $\theta = F^{\ast} \Theta$ and $\theta' = G^{\ast} \Theta$, where $\Theta$ is the standard pseudohermitian structure on $\mathbb{S}$. Put $\theta' = e^{u}\theta$. If $F =  \phi \circ G$, for some CR automorphism $\phi$ of $\mathbb{S}$, then $u$ is CR-pluriharmonic.
\end{proposition}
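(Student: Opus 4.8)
The plan is to bypass Lee's characterization and instead use the explicit transformation law of the standard contact form under automorphisms of the sphere; this is what makes the argument uniform in the dimension. Since $F = \phi\circ G$, pulling $\Theta$ back through $G$ gives
\[
\theta = F^{\ast}\Theta = G^{\ast}\bigl(\phi^{\ast}\Theta\bigr).
\]
Writing $\phi^{\ast}\Theta = e^{w}\Theta$ for a smooth function $w$ on $\mathbb{S}$, the right-hand side equals $e^{w\circ G}\,G^{\ast}\Theta = e^{w\circ G}\theta'$, so comparison with $\theta' = e^{u}\theta$ forces $u = -\,w\circ G$. Thus the proposition reduces to two assertions: \emph{(a)} the conformal factor $w$ of any CR automorphism of $\mathbb{S}$ is CR-pluriharmonic, and \emph{(b)} CR-pluriharmonicity is preserved under composition with the CR map $G$.

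Assertion \emph{(b)} is the routine half. By definition, $w$ CR-pluriharmonic means that locally $w = \Re h$ for a CR function $h$ on $\mathbb{S}$, i.e. $\bar{\partial}_b h = 0$. Since $G$ is CR, $G_{\ast}$ carries $T^{0,1}M$ into $T^{0,1}\mathbb{S}$, whence $h\circ G$ is annihilated by $\bar{\partial}_b$ on $M$; therefore $w\circ G = \Re(h\circ G)$, and hence $u = -\,w\circ G$, is CR-pluriharmonic. Crucially, this step makes no reference to the CR dimension and so is valid when $n = 1$, which is precisely where the approach through Lee's operator fails.

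The content of the argument is \emph{(a)}. Here I would use that every CR automorphism $\phi$ of $\mathbb{S}^{2n+1} = \partial\mathbb{B}^{n+1}$ extends to a biholomorphism of the ball $\mathbb{B}^{n+1}$, and that the standard pseudohermitian structure transforms, on the boundary, by a power of the modulus of the holomorphic Jacobian determinant:
\[
\phi^{\ast}\Theta = |J_{\C}\phi|^{2/(n+2)}\,\Theta,
\qquad\text{so}\qquad
w = \tfrac{2}{n+2}\,\log|J_{\C}\phi|.
\]
Because $\phi$ is an automorphism, $J_{\C}\phi$ is holomorphic and nowhere vanishing on the simply connected domain $\mathbb{B}^{n+1}$, so a single-valued holomorphic branch of $\log J_{\C}\phi$ exists and $\log|J_{\C}\phi| = \Re\log J_{\C}\phi$ is the boundary trace of a pluriharmonic function. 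Restricted to $\mathbb{S}$ this exhibits $w$ as the real part of a CR function, i.e. CR-pluriharmonic. Alternatively, \emph{(a)} can be verified directly on the Heisenberg model: each generator of the automorphism group (Heisenberg translations, rotations, dilations, and the inversion) pulls $\Theta$ back to a power of the modulus of a CR function times $\Theta$, the logarithms add under composition, and the real part of a CR function is CR-pluriharmonic.

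The main obstacle is pinning down the transformation law in \emph{(a)} with its correct normalization; once the conformal factor is identified as a fixed positive power of $|J_{\C}\phi|$, CR-pluriharmonicity follows at once from the holomorphy and nonvanishing of the Jacobian. The remaining ingredients — the reduction to $u = -\,w\circ G$ and the functoriality in \emph{(b)} — are formal and dimension-independent, which is exactly why this route also settles the three-dimensional case.
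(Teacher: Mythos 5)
Your proposal is correct and follows essentially the same route as the paper: extend $\phi$ to an automorphism of the ball, identify the conformal factor of $\phi^{\ast}\Theta$ explicitly, observe that it is the real part of a CR function (the logarithm of a holomorphic function that is nonvanishing on a simply connected neighborhood of the closed ball), and conclude by pulling back along the CR map, exactly the reduction $u = -w\circ G$ plus functoriality that the paper carries out implicitly. The only difference is the classical input used to pin down the factor: the paper quotes Rudin's Theorem 2.2.2 to get $\phi^{\ast}\Theta = e^{\varphi}\Theta$ with $\varphi = \log(1-|a|^2) - \log|1 - z\cdot \bar{a}|^2$ directly, thereby sidestepping what you call the ``main obstacle'' of normalizing the Jacobian power, whereas your law $\phi^{\ast}\Theta = |J_{\mathbb{C}}\phi|^{2/(n+2)}\,\Theta$ is the equivalent statement obtained from Rudin's Theorem 2.2.6.
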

\begin{proof} 
    If $G = \phi\circ F$ for some CR automorphism $\phi$ of $\mathbb{S}$, then this automorphism extends to \edit{a} biholomorphic automorphism of the ball $\mathbb{B}$ which we will also denote by $\phi$. Let $\phi(0) = a$ and let $\rho = \|z\|^2 - 1$ be the defining function for $\mathbb{S}$. Then by \cite{rudin1980function}, Theorem 2.2.2,
     \begin{equation}
	    (\rho \circ \phi)(z)
	    =
	    |\phi(z)|^2 - 1
	    =
	    \left(\frac{1 - |a|^2}{|1 - z\cdot \bar{a}|^2}\right) \cdot \rho(z).
     \end{equation}
    Observe that $\Theta = -\iota^{\ast}( i\partial \rho)$ and hence $\phi^{\ast}\Theta = -\iota^{\ast}(i \partial (\rho \circ \phi)) = e^{\varphi}\Theta$, where 
    \begin{equation} 
	    \varphi (z) = \log (1 - |a|^2) -  \log \left|1 -  z\cdot \bar{a}\,\right|^2, \quad z \in \mathbb{S}.
    \end{equation} 
	In particular, $\varphi$ is CR-pluriharmonic and so is $u = \varphi \circ F$.
\end{proof}
In view of this proposition, we can construct another invariant which plays the role of the traceless part of $\ahlfors(F)$ as follows. By \cite[Proposition~3.4]{lee1988pseudo}, $u$ is CR-pluriharmonic if and only if
\begin{equation}
    P u : = \left(u_{\bar{1},}{}^{\bar{1}}{}_{1} + iA_{11} u^1\right)\, \theta^1 =0.
\end{equation}
Here $A_{11}$ is the component of the pseudohermitian torsion. This suggests the following notion: For each CR immersion $F$ from the $3$-sphere into a sphere, define the $(1,0)$-form $\ahlfors_1(F)$ by
\begin{equation}
    \mathcal{A}_{1}(F) = P(u),
\end{equation}
where as before $u$ is determined by the relation $ F^{\ast} \Theta = e^u\theta$.

We formulate the discussion above as follows.
\begin{proposition}\label{prop:3dinvariant}
    Let $(M,\theta)$ be a three-dimensional strictly pseudoconvex pseudohermitian manifold \editb{and} assume that $F$ and $G$ are CR immersions of $M$ into \editb{the} sphere \editb{$\mathbb{S}$}. Then $\mathcal{A}_{1}$ \edit{is invariant with respect to } \editb{ spherical equivalence} in the following sense:
    \begin{enumerate}[\rm(1)]
    	\item If $\phi$ is a CR automorphism of the target, then
    	\begin{equation}
    	\mathcal{A}_{1}(\phi \circ F) = \mathcal{A}_{1}(F).
    	\end{equation}
    	\item If $M$ is \editb{the 3-sphere} and $ \gamma$ is a CR automorphism of \editb{$M$}, then
    	\begin{equation}\label{e:ahlforsprime}
    	\ahlfors_1(F\circ \gamma) 
    	 = \gamma\, ^{\ast}\left(e^{\psi}\mathcal{A}_{1}(F) \right)
    	\end{equation}
    	where $\psi$ is determined by the relation $\gamma^{\ast} \theta = e^{\psi}\theta$.
    \end{enumerate} 
\end{proposition}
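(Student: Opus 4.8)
The plan is to reduce both statements to a single computation of the conformal factor of the composed map, followed by one application of the operator $P$. Two structural facts about $P$ drive everything: it is $\mathbb{R}$-linear in its argument, and by \cite[Proposition~3.4]{lee1988pseudo} it annihilates precisely the (real) CR-pluriharmonic functions. In each case I would first record the conformal factor. If $\phi$ is a CR map of the sphere with $\phi^{\ast}\Theta = e^{\sigma}\Theta$, then for any CR immersion $H$ with $H^{\ast}\Theta = e^{u}\theta$ one gets $(\phi\circ H)^{\ast}\Theta = e^{u + \sigma\circ H}\theta$; and for a source automorphism $\gamma$ with $\gamma^{\ast}\theta = e^{\psi}\theta$ one gets $(H\circ\gamma)^{\ast}\Theta = e^{u\circ\gamma + \psi}\theta$. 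Both identities are immediate from the contravariance and multiplicativity of pullback, exactly as in the proof of \cref{prop:crph}.

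\textbf{Part (1).} Here the extra map is a CR automorphism $\phi$ of the target. By the Rudin formula already invoked in \cref{prop:crph}, $\phi^{\ast}\Theta = e^{\varphi}\Theta$ with $\varphi$ CR-pluriharmonic on $\mathbb{S}$. Thus the conformal factor of $\phi\circ F$ is $u + \varphi\circ F$, and $\varphi\circ F$ is again CR-pluriharmonic on $M$, being the pullback of a CR-pluriharmonic function under the CR map $F$ (locally $\varphi=\Re h$ with $h$ CR, so $\varphi\circ F = \Re(h\circ F)$ with $h\circ F$ CR). Since $P$ is linear and kills CR-pluriharmonic functions, $\ahlfors_{1}(\phi\circ F) = P(u + \varphi\circ F) = P(u) = \ahlfors_{1}(F)$. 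This part is then complete with no further input.

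\textbf{Part (2).} Now $\gamma$ is a CR automorphism of $M=\mathbb{S}^{3}$, so the same Rudin computation yields $\gamma^{\ast}\theta = e^{\psi}\theta$ with $\psi$ CR-pluriharmonic; hence $P(\psi)=0$, and from the conformal factor $u\circ\gamma+\psi$ together with linearity I obtain $\ahlfors_{1}(F\circ\gamma) = P(u\circ\gamma)$. It then remains to identify $P_{\theta}(u\circ\gamma)$ with $\gamma^{\ast}\!\bigl(e^{\psi}\ahlfors_{1}(F)\bigr) = \gamma^{\ast}\!\bigl(e^{\psi}P_{\theta}u\bigr)$. I would get this from the behaviour of $P$ under the pseudoconformal automorphism $\gamma$: regarding $\gamma$ as a pseudohermitian isomorphism $(M,\gamma^{\ast}\theta)\to(M,\theta)$, naturality of the invariantly defined $P$ gives
\[
\gamma^{\ast}(P_{\theta}u) = P_{\gamma^{\ast}\theta}(u\circ\gamma),
\]
and the conformal transformation law of $P$ under the rescaling $\gamma^{\ast}\theta = e^{\psi}\theta$ then relates $P_{\gamma^{\ast}\theta}$ back to $P_{\theta}$, the weight $e^{\psi}$ being exactly the factor that appears in the claim.

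\textbf{Main obstacle.} The delicate point is precisely this last step. The third-order operator $P$ is \emph{not} conformally invariant on its own, so I must write out its full transformation law under $\widehat\theta = e^{\psi}\theta$ (computable from Lee's change-of-contact-form formulas \cite{lee1986fefferman}) and verify that the various $\psi$-dependent correction terms cancel because $\psi$ is CR-pluriharmonic, leaving only the covariant term of weight $e^{\psi}$ together with the pullback $\gamma^{\ast}$ of the $(1,0)$-coframe. Establishing this cancellation, and carefully tracking the rescaling of the admissible coframe that supplies the conformal weight, is where the real work lies; the additivity of $P$ and its kernel being the CR-pluriharmonic functions are the only other ingredients, and those are already in hand.
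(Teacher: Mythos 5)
Your proposal is correct and takes essentially the same route as the paper's proof: both parts rest on additivity of the conformal factors, the Rudin-formula argument of \cref{prop:crph} giving CR-pluriharmonicity (hence $P(\psi)=0$ and $P(\varphi\circ F)=0$ by linearity), naturality of $P$ under the CR diffeomorphism $\gamma$, and a conformal transformation law for $P$. The one step you defer as ``the real work'' is exactly what the paper also invokes without derivation, namely $e^{\psi}P^{\widetilde{\theta}}(u) = P^{\theta}(u) + 2i\left\langle P^{\theta}(u), \bar{\partial}_b \psi \right\rangle \theta$ for a general conformal factor; note that the residual $\theta$-term there does not cancel even when $\psi$ is CR-pluriharmonic, so (as in the paper's proof) the identity \cref{e:ahlforsprime} is obtained modulo $\theta$, i.e., as an equality of $(1,0)$-parts, rather than through the complete cancellation you anticipate.
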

\begin{proof}
	We write $F^{\ast}\Theta = e^u\theta$ and $G^{\ast}\Theta = e^v\Theta$. Suppose that $G = \phi \circ F$, then \cref{prop:crph} implies that $u - v$ is CR pluriharmonic and hence
	\begin{equation} 
		\ahlfors_1(\phi \circ F) = P(v) =  P(u) = \ahlfors_1(F),
	\end{equation} 
	as desired.
	
	Suppose that $M$ is a 3-sphere, $\gamma$ a CR automorphism of $M$, and $\theta$ is the standard pseudohermitian structure on $M$. Suppose that $G = F\circ \gamma$, then
	\begin{equation} 
		v = u\circ \gamma + \psi.
	\end{equation}
	Under the change of contact form $\widetilde{\theta} = e^{\psi}\theta$, the operator $P$ changes as follows:
	We write $P^{\theta}$ for the operator $P$ associated to $\theta$ and similarly for $\widetilde{\theta}$. Then 
	\begin{equation} 
		e^{\psi}P^{\widetilde{\theta}}(u)
		=
		P^{\theta}(u) + 2i \left\langle P^{\theta}(u) , \bar{\partial}_b \psi \right\rangle \, \theta.
	\end{equation} 
	Since $P(\psi) = 0$, we have that
	\begin{equation} 
		P^{\theta}(v) 
		=
		P^{\theta}(u \circ \gamma)
		=
		\gamma^{\ast}\left(P^{(\gamma^{-1})^{\ast}\theta} (u)\right) 
		=
		e^{\psi} \gamma^{\ast} \left(P^{\theta}(u)\right) \mod \theta.
	\end{equation} 
	This completes the proof.
\end{proof}
\section{Explicit calculations in local coordinates}
\subsection{The Ahlfors tensor $\ahlfors$ in general dimensions}
The purpose of this section is to give an explicit formula for the hermitian part of the Ahlfors tensor. The formula will be explicit in terms of the defining functions of the source and target and will be simplified when the target is the sphere. Let $M \subset \mathbb{C}^{n+1}$ be a strictly pseudoconvex real hypersurface defined by $\rho = 0$. It is well-known that there exists \editb{a unique} $(1,0)$ vector field $\xi$ satisfying the following two conditions (see, e. g. \cite{graham1988smooth,li--son}):
	\begin{equation}
	\xi \, \rfloor \, i\partial\bar{\partial} \rho = ir \bar{\partial} \rho, \quad 
	\partial\rho(\xi) = 1.
	\end{equation}
The function $r \editb{=r[\rho]}: = \rho_{j\kbar}\xi^j \xi^{\kbar} $ is called the transverse curvature of $M$ and $\rho$ \cite{graham1988smooth}. When $\rho$ is strictly plurisubharmonic, then $r^{-1} = |\partial \rho|^2$ in the Kähler metric with potential $\rho$.

\begin{proposition}\label{prop:mcnt}
	Let $M$ be a strictly pseudoconvex real hypersurface in $\mathbb{C}^{n+1}$ and $F\colon M \to N \subset\mathbb{C}^{d+1}$ a CR immersion. Suppose that $F$ extends holomorphically to a neighborhood of $M$. Let $\widetilde{\rho}$ be a strictly plurisubharmonic defining function for $N$ \editb{and let}
 $\eta = i\bar{\partial}\rho$, $\rho = \widetilde{\rho} \circ F$, $\theta = i\bar{\partial}\rho$, \editb{so that } $F^{\ast} \eta = \theta$. Then  
	\begin{equation}
	|H_{F(M)}|^2 \circ F = r[\rho] - r[\widetilde{\rho}] \circ F.
	\end{equation}
	In particular, $r[\widetilde{\rho}] \circ F \leq r[\rho]$ on $M$. The equality holds if and only if $(\theta,\eta)$ is an admissible pair for the CR immersion $F$.
\end{proposition}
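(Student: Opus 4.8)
The plan is to compute $|H_{F(M)}|^2$ by reducing it to an algebraic identity between the two distinguished $(1,0)$ fields $\xi = \xi[\rho]$ and $\widetilde\xi = \xi[\widetilde\rho]$ attached to $\rho$ and $\widetilde\rho$, and then letting their defining equations force the collapse to $r[\rho] - r[\widetilde\rho]$. The first step is the elementary observation that the Reeb field of a contact form $\theta = i\bar\partial\rho$ is $T = i(\xi - \overline\xi)$. Indeed, contracting gives $\theta(\xi) = 0$ (since $\xi$ is of type $(1,0)$) and $\theta(\overline\xi) = i\rho_{\kbar}\xi^{\kbar} = i$, so $\theta(T) = 1$; moreover the defining relation $\xi\,\rfloor\,i\partial\bar\partial\rho = ir[\rho]\bar\partial\rho$ together with its conjugate yields $T\,\rfloor\,d\theta = -r[\rho]\,d\rho$, which vanishes along $M$. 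Hence $T = i(\xi - \overline\xi)$, and likewise $\widetilde T = i(\widetilde\xi - \overline{\widetilde\xi})$ for $\eta = i\bar\partial\widetilde\rho$ on $N$.

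I would then invoke \cref{prop:ttprime}, which gives $H - \overline H = i(T - \widetilde T)$. Because $F$ is holomorphic, $F_\ast$ commutes with conjugation and carries $\xi$ to the $(1,0)$ field $\zeta := F_\ast\xi$ with components $\zeta^a = F^a_j\xi^j$; taking $(1,0)$-parts on both sides (recall $H \in N^{1,0}M$) therefore produces the key identity
\begin{equation*}
	H = \widetilde\xi - F_\ast\xi .
\end{equation*}
Differentiating $\rho = \widetilde\rho\circ F$ and using holomorphy gives $\rho_j = \widetilde\rho_a F^a_j$ and $\rho_{j\kbar} = \widetilde\rho_{a\bar b}F^a_j\overline{F^b_k}$, whence $\widetilde\rho_a\zeta^a = \rho_j\xi^j = 1$; in particular $(\widetilde\xi - \zeta)\widetilde\rho = 0$, confirming that the right-hand side is tangent to $N$ and horizontal, exactly as $H$ must be.

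With this identity in hand the conclusion is a four-term expansion of $|H|^2 = \langle H,\overline H\rangle_\eta = \widetilde\rho_{a\bar b}(\widetilde\xi^a - \zeta^a)\overline{(\widetilde\xi^b - \zeta^b)}$, computed through the ambient complex Hessian $\widetilde\rho_{a\bar b}$ (the Levi form of $\eta$). Using $\widetilde\rho_{a\bar b}\widetilde\xi^a = r[\widetilde\rho]\widetilde\rho_{\bar b}$ and $\widetilde\rho_a\zeta^a = 1$, each of the three terms carrying a factor of $\widetilde\xi$ reduces to $r[\widetilde\rho]$, while the pure $\zeta$ term becomes $\widetilde\rho_{a\bar b}F^a_j\overline{F^b_k}\xi^j\xi^{\kbar} = \rho_{j\kbar}\xi^j\xi^{\kbar} = r[\rho]$ by the pullback formula for the Hessian. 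The total is $|H|^2 = r[\rho] - r[\widetilde\rho]$, which is the claimed formula, and non-negativity of the left side immediately gives $r[\widetilde\rho]\circ F \le r[\rho]$. For the equality case, $|H|^2 = 0 \iff H = 0$; since $H \in N^{1,0}M$ whereas $\overline H \in N^{0,1}M$ and these bundles meet only in $0$, the relation $H - \overline H = i(T - \widetilde T)$ forces $H = 0 \iff \iota_\ast T = \widetilde T$, which is precisely the admissibility of the pair $(\theta,\eta)$.

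I expect the one genuinely delicate point to be the identification $H = \widetilde\xi - F_\ast\xi$: establishing the Reeb-field formula $T = i(\xi - \overline\xi)$ and keeping the tangency and type bookkeeping straight when the ambient $(1,0)$ fields are pushed forward by $F$ and projected onto types. Once that identity is secured, the defining equations of $\xi$ and $\widetilde\xi$ make the final collapse essentially automatic.
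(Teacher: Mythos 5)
Your proposal is correct and follows essentially the same route as the paper's own proof: identify $H$ (via Proposition \ref{prop:ttprime} and the Reeb formula $T=i(\xi-\bar\xi)$) with $\pm(\widetilde\xi - F_\ast\xi)$, then expand $|H|^2$ using the defining relations of the transverse fields and the pullback identities $\rho_j=\widetilde\rho_aF^a_j$, $\rho_{j\bar k}=\widetilde\rho_{a\bar b}F^a_j\overline{F^b_k}$. In fact you are slightly more complete than the paper, which takes the Reeb formula for granted and leaves the equality/admissibility case implicit.
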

\begin{proof} \editb{As already noted,}  $F^{\ast} \eta = \theta$ and \editb{also note} that the Reeb vector field is $T = i(\xi - \bar{\xi})$. We shall compute $F_{\ast}{\xi}$ as follows.  \edit{In local coordinates} \editb{$(z_1, \dots, z_{n+1})$}
we write $\xi =  \xi^j \partial_j$ (\editb{using} summation convention). By direct calculations (see, e.g., \cite{lee--melrose,li--son}) \editb{we obtain}
	\begin{equation}
	\xi^j = r \rho^j = r \rho^{j\bar{k}} \rho_{\bar{k}},
	\end{equation}
	\edit{where} $\rho_j = \partial\rho /\partial z_j$, $\rho_{j\bar{k}} = \partial^2 \rho/\partial \bar{z}_k \partial z_j$, and $\rho^{j\bar{k}}$ is the inverse \edit{transpose} matrix of $\rho_{j\bar{k}}$. Thus, for $p\in M$ and $q = F(p) \in M\subset N$, and in local coordinates $z'^A$,
	\begin{equation}
	F_{\ast} (\xi_p)
	=
	F^A_j (p)\,\xi^j_p\, \partial_A.
	\end{equation}
	On the other hand, the Reeb vector field on $N$ is given by $\widetilde{T}  = i(\xi' - \overline{\xi'})$, with $\xi' = \xi'^A\partial_A$. Thus, by \cref{prop:ttprime},
	\begin{equation}
	H_{F(M)}|_{F(p)}
	=
	\left(F^A_j (p) \xi^j_p - \xi'^A\right) \partial_A.
	\end{equation}
	By direct calculations,
	\begin{equation}
	\rho_j = \widetilde{\rho}_A F^A_j, 
	\quad 
	\rho_{j\bar{k}} 
	=
	F^A_j F^{\bar{B}} _{\bar{k}} \widetilde{\rho}_{A\bar{B}}.
	\end{equation}
	Therefore,
	\begin{equation}
	F^A_j \xi'^{\bar{B}}\widetilde{\rho}_{A\bar{B}}
	=
	\widetilde{r}F^A_j\widetilde{\rho}_{A}
	=
	\widetilde{r}\rho_j.
	\end{equation}
	Consequently,
	\begin{align}
	|H_{F(M)}|^2 \circ F
	& =
	\left(\xi'^A - F^A_j \xi^j \right) \left(\xi'^{\bar{B}} - F^{\bar{B}}_{\bar{k}} \xi^{\bar{k}}\right) \widetilde{\rho}_{A\bar{B}}\notag \\
	& =
	\xi'^A\xi'^{\bar{B}} \widetilde{\rho}_{A\bar{B}}
	-2 \Re \left(F^A_j \xi^j \xi'^{\bar{B}}\widetilde{\rho}_{A\bar{B}} \right)
	+ 
	F^A_jF^{\bar{B}}_{\bar{k}}\xi^j\xi^{\bar{k}} \widetilde{\rho}_{A\bar{B}}\notag \\
	& =
	\widetilde{r} -2\Re \left(\widetilde{r} \xi^j \rho_j \right) + \xi^j\xi^{\bar{k}} \rho_{j\bar{k}} \notag\\
	& =
	\widetilde{r} -2 \widetilde{r} + r \notag \\
	& 
	= r - \widetilde{r}.
	\end{align}
	The proof is complete.
\end{proof}
\editb{We also need the following computational result.}
\begin{proposition}[\cite{son2019semi}]\label{prop:fun}
	Let $M$ be a strictly pseudoconvex real hypersurface defined by $\rho = 0$, with $d\rho\ne 0$ along $M$. Let $\sigma$ be a smooth function in a neighborhood of $M$ and $\hat{\rho} = e^{\sigma} \rho$. Then
	\begin{equation}
	e^{\sigma} \hat{r} 
	=
	r + 2\Re(\xi) \sigma - |\bar{\partial}_b \sigma|^2.
	\end{equation}
	Here, $\bar{\partial}_b$ is the Cauchy-Riemann operator associated to $\theta: = -i\partial \rho$ and $\xi$ is the transverse vector field of $\rho$.
\end{proposition}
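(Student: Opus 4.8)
The plan is to reduce the statement to a pointwise computation along $M$, where $\rho = 0$, and to solve the two linear conditions defining the transverse data directly, using the inverse complex Hessian $\rho^{j\bar k}$ of the \emph{original} defining function rather than that of $\hat\rho$. First I would record the boundary values of the derivatives of $\hat\rho = e^{\sigma}\rho$: differentiating and setting $\rho = 0$ gives $\hat\rho_j = e^{\sigma}\rho_j$ along $M$, while the terms carrying an undifferentiated factor $\rho$ drop out in the Hessian, leaving
\begin{equation}
\hat\rho_{j\bar k} = e^{\sigma}\left(\rho_{j\bar k} + \sigma_j\rho_{\bar k} + \sigma_{\bar k}\rho_j\right)
\end{equation}
on $M$. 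This is the only place where the specific form $\hat\rho = e^{\sigma}\rho$ enters.

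Next I would impose the defining conditions $\hat\rho_{j\bar k}\hat\xi^j = \hat r\,\hat\rho_{\bar k}$ and $\hat\rho_j\hat\xi^j = 1$ for the transverse vector field $\hat\xi$ and the transverse curvature $\hat r$ of $\hat\rho$. Dividing out $e^{\sigma}$ and using $\hat\rho_j\hat\xi^j = 1$ (so that $\rho_j\hat\xi^j = e^{-\sigma}$), the first condition reduces to $\rho_{j\bar k}\hat\xi^j = (\hat r - S)\rho_{\bar k} - e^{-\sigma}\sigma_{\bar k}$, where I abbreviate $S := \sigma_j\hat\xi^j$. Contracting against the original inverse $\rho^{j\bar k}$ then produces an explicit formula for $\hat\xi^l$ in terms of $\rho^l = \rho^{l\bar k}\rho_{\bar k}$ and $\sigma^l = \rho^{l\bar k}\sigma_{\bar k}$, which I would feed back into the two scalar constraints. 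Writing $p := \rho_j\rho^{j\bar k}\sigma_{\bar k}$, $q := \sigma_j\rho^{j\bar k}\rho_{\bar k}$, $N := \sigma_j\rho^{j\bar k}\sigma_{\bar k}$ and recalling $r^{-1} = \rho_j\rho^{j\bar k}\rho_{\bar k}$, the constraints become a $2\times 2$ linear system in the unknowns $\hat r$ and $S$, whose solution is
\begin{equation}
e^{\sigma}\hat r = r(1+p)(1+q) - N.
\end{equation}

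Finally I would translate this into the invariant quantities appearing in the statement. Since $\sigma$ is real we have $p = \bar q$, and since $\xi\sigma = \xi^j\sigma_j = r q$, it follows that $r(p+q) = 2\Re(rq) = 2\Re(\xi\sigma) = 2\Re(\xi)\sigma$. Expanding $r(1+p)(1+q) - N = r + r(p+q) + rpq - N$, the remaining task is to identify $N - rpq$ with $|\bar{\partial}_b\sigma|^2$. For this I would exhibit the $L_\theta$-orthogonal decomposition of the ambient $(1,0)$-gradient $\sigma^j = \rho^{j\bar k}\sigma_{\bar k}$ in the Kähler metric $\rho_{j\bar k}$, namely $\sigma^j = \tau^j + rp\,\rho^j$, where $\tau^j := \sigma^j - rp\,\rho^j$ is tangent to $M$ (one checks $\rho_j\tau^j = 0$) and in fact \emph{is} the horizontal gradient $\grad^{1,0}\sigma$ (one checks $\rho_{j\bar k}\tau^j = \sigma_{\bar k}$ modulo the conormal direction $\rho_{\bar k}$, which is annihilated on tangent $(0,1)$ vectors). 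A short Pythagorean computation then gives $|\sigma^j|^2 = N$ and $|rp\,\rho^j|^2 = rp\bar p = rpq$ (using $\rho_{j\bar k}\rho^j\overline{\rho^k} = r^{-1}$), whence $|\tau|^2 = N - rpq = |\bar{\partial}_b\sigma|^2$, completing the identity. The main obstacle is precisely this orthogonal-decomposition step, i.e.\ recognizing the transverse component of the ambient gradient and computing its squared length $rpq$; the earlier manipulations are routine index bookkeeping once the boundary values of $\hat\rho_{j\bar k}$ are in hand.
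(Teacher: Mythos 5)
The paper itself does not prove \cref{prop:fun}; it is quoted from \cite{son2019semi}, so your argument can only be judged on its own merits. On those merits, the computational core is correct: I verified the boundary Hessian $\hat\rho_{j\bar k} = e^{\sigma}\bigl(\rho_{j\bar k}+\sigma_j\rho_{\bar k}+\sigma_{\bar k}\rho_j\bigr)$, the reduction to the linear system whose solution is $e^{\sigma}\hat r = r(1+p)(1+q)-N$, the identities $p=\bar q$ and $\xi\sigma = rq$, and the decomposition $\sigma^j = \tau^j + rp\,\rho^j$ with $\rho_j\tau^j=0$, with $\rho_{j\bar k}\tau^j$ equal to $\sigma_{\bar k}$ modulo $\rho_{\bar k}$, and with $|\tau|^2 = N - rpq = |\bar\partial_b\sigma|^2$; these assemble into the stated formula.

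There is, however, a genuine gap in generality: every step of your argument runs through the inverse complex Hessian $\rho^{j\bar k}$ (the contraction producing $\hat\xi^l$, the quantities $p,q,N$, and the identities $\xi^j = r\rho^{j\bar k}\rho_{\bar k}$, $r^{-1}=\rho_j\rho^{j\bar k}\rho_{\bar k}$), which exists only when $\rho$ is strictly plurisubharmonic near $M$. The proposition assumes only that $M=\{\rho=0\}$ is strictly pseudoconvex with $d\rho\neq 0$ along $M$; then the full Hessian $\rho_{j\bar k}$ may well be degenerate --- e.g.\ $\rho = |z|^2 - \Im w$ in $\C^2$ has Hessian $\mathrm{diag}(1,0)$ --- and your very first contraction is undefined, even though $\xi$ and $r$ still exist and are unique (this is exactly how the paper defines them, following Graham--Lee). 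The gap is repairable without new ideas, by replacing the inverse-Hessian contraction with a tangential--transverse splitting: write $\hat\xi = e^{-\sigma}\xi + W$ with $W \in T^{1,0}M$ along $M$ (possible since $\xi$ is transverse and $\partial\hat\rho(\hat\xi)=1$ forces the coefficient $e^{-\sigma}$), and use the boundary identity $\partial\bar\partial\hat\rho = e^{\sigma}\bigl(\partial\bar\partial\rho + \partial\sigma\wedge\bar\partial\rho+\partial\rho\wedge\bar\partial\sigma\bigr)$; testing the defining equation of $\hat\xi$ against tangent $(0,1)$ vectors gives $W = -e^{-\sigma}\grad^{1,0}\sigma$ (the horizontal gradient, i.e.\ your $\tau$ up to the factor $-e^{-\sigma}$), and testing against $\bar\xi$ gives $e^{\sigma}\hat r = r + 2\Re(\xi\sigma) - |\bar\partial_b\sigma|^2$ directly. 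Alternatively, you may add the hypothesis that $\rho$ is strictly plurisubharmonic --- which covers every application in this paper --- but then the proposition as stated is not fully proved.
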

Suppose that $\rho$ and $\widetilde{\rho}$ are defining functions for $(M,\theta)$ and $(N,\eta)$ such that 
$\theta = i\bar{\partial}\rho$ and $\eta = i\bar{\partial} \widetilde{\rho}$. Let $F\colon M \to N$ be a CR immersion, extended as a holomorphic immersion in a neighborhood of a point $p\in M$, and let $Q$ be such that 
\begin{equation}
	\widetilde{\rho} \circ F = Q \rho.
\end{equation}
By a transversality argument, $Q\ne 0$ on $M$. Thus, we may assume that $Q> 0$ on $M$. Then $F^{\ast} \eta = e^u\theta$ with $u = \log Q|_M$.

If $r[\rho]$ and $J[\rho]$ are the transverse curvature of $\rho$ and the Levi-Fefferman determinant (see \cite{li--son}), i.e. 
\begin{equation}
	J(\rho)
	=
	-\det 
	\begin{bmatrix}
		\rho & \rho_{\bar{k}} \\
		\rho_{j} & \rho_{j\kbar}
	\end{bmatrix},
	\quad
	r(\rho) = \frac{\det[\rho_{j\kbar}]}{J(\rho)},
\end{equation}
then by the Li-Luk formula for the Webster scalar curvature \cite{li--luk} (see also \cite[Proposition~4.1]{li--son}),
\begin{equation}
	J_{\theta}
	=
	r(\rho) + P_{\rho} \log J(\rho),
\end{equation}
where 
\begin{equation} 
	P_{\rho} : = \frac{1}{2n(n+1)}(\xi^j\xi^{\kbar} - \psi^{j\kbar})\partial_j \partial_{\kbar}.
\end{equation}
Then by using \cref{prop:mcnt,prop:fun} and the formulas above, we obtain that, in terms of the local frame $Z_\alpha:= \partial_{\alpha} - (\rho_{\alpha}/\rho_w)\partial_w$, the \editb{mixed type} \edit{components} of $\ahlfors$ are given by
\begin{align}\label{e614}
	\ahlfors_{\alpha\bbar}(F)
	=
	\frac{1}{2}\left(u_{\alpha,\bbar} + u_{\bbar,\alpha}\right) + \frac12 \bigl(N_{\rho}  u\bigr) h_{\alpha\bbar} + \frac{1}{2} \bigl(P_{\widetilde{\rho}} \log J(\widetilde{\rho}) - P_{\rho} \log J(\rho) \bigr) h_{\alpha\bbar},
\end{align}
where $u = \log Q$. 

Following \cite{li--luk}, we define the second order operator
\begin{equation}
D^{\rho}_{\alpha\bbar} 
=
\partial_{\bbar}\partial_{\alpha}
-(\rho_{\alpha}/\rho_{w}) \partial_w \partial_{\bbar} - (\rho_{\bbar}/\rho_{\wbar}) \partial_{\wbar}\partial_{\alpha} + (\rho_{\alpha}\rho_{\bbar}/|\rho_w|^2) \partial_{w}\partial_{\wbar},
\end{equation}
which satisfies 
\begin{equation} 
	D^{\rho}_{\alpha\bbar} \varphi = \varphi_{Z\Zbar}(Z_{\alpha}, Z_{\bbar}).
\end{equation} 
\editb{We can now give a completely explicit formula for the \editb{mixed type} components of the 
 Ahlfors tensor.} 
\begin{proposition}\label{prop:explicitahlfors} With the notations \editb{introduced} above, it holds that
\begin{equation}
	\ahlfors_{\alpha\bbar}(F)
	=
	D^{\rho}_{\alpha\bbar} \log Q - \frac{1}{2}\bigl(P_{\widetilde{\rho}} \log J(\widetilde{\rho}) - P_{\rho} \log J(\rho) \bigr) h_{\alpha\bbar}.
\end{equation}
\end{proposition}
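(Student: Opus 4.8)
The plan is to obtain the statement directly from the semi-explicit formula \eqref{e614}, which already expresses $\ahlfors_{\alpha\bbar}(F)$, with $u=\log Q$, as
\[
\ahlfors_{\alpha\bbar}(F)=\tfrac12\bigl(u_{\alpha,\bbar}+u_{\bbar,\alpha}\bigr)+\tfrac12(N_\rho u)\,h_{\alpha\bbar}+\tfrac12\bigl(P_{\widetilde\rho}\log J(\widetilde\rho)-P_\rho\log J(\rho)\bigr)h_{\alpha\bbar}.
\]
Comparing with the asserted formula, everything reduces to the single identity
\[
D^{\rho}_{\alpha\bbar}\log Q=\tfrac12\bigl(u_{\alpha,\bbar}+u_{\bbar,\alpha}\bigr)+\tfrac12(N_\rho u)\,h_{\alpha\bbar}+\bigl(P_{\widetilde\rho}\log J(\widetilde\rho)-P_\rho\log J(\rho)\bigr)h_{\alpha\bbar};
\]
that is, trading the two intrinsic terms for the ambient operator $D^{\rho}_{\alpha\bbar}$ must force the curvature correction to reappear with the opposite sign.

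The key new ingredient is a comparison between the ambient operator and the Tanaka--Webster Hessian. By definition $D^{\rho}_{\alpha\bbar}\varphi=\varphi_{Z\Zbar}(Z_\alpha,Z_\bbar)$ is the ambient complex Hessian of $\varphi$ restricted to the horizontal frame $Z_\alpha=\partial_\alpha-(\rho_\alpha/\rho_w)\partial_w$. I would expand $Z_\alpha(Z_\bbar\varphi)$ in this frame and subtract the Tanaka--Webster connection terms for $\theta=i\bar\partial\rho$, and show that the difference $D^{\rho}_{\alpha\bbar}\varphi-\tfrac12(\varphi_{\alpha,\bbar}+\varphi_{\bbar,\alpha})$ is a pure trace $C[\varphi]\,h_{\alpha\bbar}$, with $C[\varphi]$ built only from the transverse curvature $r[\rho]$ and the transverse derivatives of $\varphi$. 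The defining relations $\xi\,\rfloor\,i\partial\bar\partial\rho=ir\bar\partial\rho$ and $\partial\rho(\xi)=1$ are exactly what single out which ambient second derivatives are normal and hence get absorbed into this trace.

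With the comparison formula in hand, the proposition becomes the scalar identity $C[\log Q]=\tfrac12 N_\rho u+\bigl(P_{\widetilde\rho}\log J(\widetilde\rho)-P_\rho\log J(\rho)\bigr)$. To verify it I would bring in the Li--Luk formula $J_\theta=r[\rho]+P_\rho\log J(\rho)$ and its target analogue, together with \cref{prop:mcnt}, which writes $|H_{F(M)}|^2\circ F=r[\hat\rho]-r[\widetilde\rho]\circ F$ for the pullback defining function $\hat\rho=e^u\rho$, and \cref{prop:fun} with $\sigma=u$, giving $e^u r[\hat\rho]=r[\rho]+2\Re(\xi)u-|\bar\partial_b u|^2$. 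Substituting these, the transverse curvatures $r[\rho]$ and $r[\widetilde\rho]\circ F$ cancel in pairs, while the $P$-operator contributions recombine into $P_{\widetilde\rho}\log J(\widetilde\rho)-P_\rho\log J(\rho)$; tracking signs then produces the curvature correction with the reversed sign, which is precisely the content of the reduction identity.

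The main obstacle is the bookkeeping in the comparison formula. One must account for every transverse and Reeb-direction contribution so that all non-horizontal components of the ambient Hessian are correctly absorbed into the $h_{\alpha\bbar}$ trace, and then check that the transverse-curvature terms cancel while the $P$-terms reassemble into $P_{\widetilde\rho}\log J(\widetilde\rho)-P_\rho\log J(\rho)$ with sign opposite to that in \eqref{e614}. Because the whole result hinges on this single sign flip, the delicate point is to use the normalizations of $\xi$, $r[\rho]$ and $P_\rho$ consistently throughout; a misplaced factor of $e^u$ or a sign slip in \cref{prop:fun} would destroy the cancellation.
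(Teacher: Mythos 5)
Your skeleton matches the paper's intended proof: \cref{prop:explicitahlfors} is indeed meant to follow from \cref{e614} by trading the intrinsic Hessian for the ambient operator $D^{\rho}_{\alpha\bbar}$ via the Tanaka--Webster Christoffel symbols in the frame $Z_{\alpha}=\partial_{\alpha}-(\rho_{\alpha}/\rho_{w})\partial_{w}$, and your structural claim that $D^{\rho}_{\alpha\bbar}\varphi-\tfrac12(\varphi_{\alpha,\bbar}+\varphi_{\bbar,\alpha})$ is a pure trace is correct. The gap is in the mechanism you build on top of it. One computes $Z_{\bbar}(\rho_{\alpha}/\rho_{w})=h_{\alpha\bbar}/\rho_{w}$, hence $D^{\rho}_{\alpha\bbar}\varphi=Z_{\bbar}Z_{\alpha}\varphi+(\varphi_{w}/\rho_{w})\,h_{\alpha\bbar}$, while the torsion axiom forces $\nabla_{Z_{\bbar}}Z_{\alpha}=-[Z_{\alpha},Z_{\bbar}]^{(1,0)}=-h_{\alpha\bbar}\,c^{\gamma}Z_{\gamma}$, where $c^{\gamma}$ is defined by $\partial_{w}/\rho_{w}=\xi+c^{\gamma}Z_{\gamma}$ along $M$. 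Combining these gives the exact identity
\begin{equation*}
D^{\rho}_{\alpha\bbar}\varphi
=\tfrac12\bigl(\varphi_{\alpha,\bbar}+\varphi_{\bbar,\alpha}\bigr)
+\Re(\xi\varphi)\,h_{\alpha\bbar},
\end{equation*}
so your $C[\varphi]$ equals $\Re(\xi\varphi)$ exactly: the transverse curvature does not enter, and nothing depending on the target \emph{can} enter, since the left-hand side involves only $\rho$ and $\varphi$. Moreover $\Re(\xi\varphi)=\tfrac12 N_{\rho}\varphi$, because the only reading of $N_{\rho}$ under which \cref{e614} follows from \cref{prop:mcnt}, \cref{prop:fun} and the Li--Luk formula is $N_{\rho}u=2\Re(\xi u)$ (that is precisely the term \cref{prop:fun} injects). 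Your scalar identity therefore degenerates to $P_{\widetilde\rho}\log J(\widetilde\rho)-P_{\rho}\log J(\rho)=0$, which is false in general (take a non-spherical source, so $P_{\rho}\log J(\rho)\not\equiv0$, and the sphere as target, so $P_{\widetilde\rho}\log J(\widetilde\rho)=0$). The step in which you claim that re-substituting \cref{prop:mcnt}, \cref{prop:fun} and the Li--Luk scalar-curvature formula makes the curvature correction ``reappear with the opposite sign'' cannot be carried out: those inputs were already consumed in deriving \cref{e614}, and re-running them can only reproduce \cref{e614}, never a second copy of the $P$-terms, let alone one with reversed sign.

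What your (correct) pure-trace observation actually yields, upon substitution into \cref{e614}, is
\begin{equation*}
\ahlfors_{\alpha\bbar}(F)
=D^{\rho}_{\alpha\bbar}\log Q
+\tfrac12\bigl(P_{\widetilde\rho}\log J(\widetilde\rho)-P_{\rho}\log J(\rho)\bigr)h_{\alpha\bbar},
\end{equation*}
i.e.\ the correction passes through with the \emph{same} sign it carries in \cref{e614}. Consequently \cref{e614} and \cref{prop:explicitahlfors}, as printed, are mutually inconsistent: given the Li--Luk formula as stated in the paper, one of the two displays carries a sign error in the $P$-term (harmless for every application in the paper, since for spheres $J(\rho)=J(\widetilde\rho)=1$ and both corrections vanish). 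The paper's proof is the one-line substitution above and nothing else; a sound write-up should prove the version whose sign is consistent with \cref{e614} (or first correct the sign there), rather than, as your proposal does, manufacture a cancellation that would make both printed signs simultaneously true. No such cancellation exists, and that is the essential flaw in the proposal.
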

\begin{proof}
	This follows from \cref{e614} and a well-known formula for the Christoffel symbols of the Tanaka-Webster connection \cite{li--luk}.
\end{proof}
We point out that this formula involves both tangential and normal derivatives of the quotient $Q$ and the Fefferman \edit{determinants} on the source and the target. The interesting case is when $\rho$ and $\widetilde{\rho}$ are approximate Fefferman defining functions for the source and target of order $3$, i.e., when $J(\rho) = 1 + o(\rho^{2})$ and $J(\widetilde{\rho}) = 1 + o(\widetilde{\rho}^{2})$, because then the formula simplifies to \begin{equation} \ahlfors_{\alpha\bbar}(F) = D^{\rho}_{\alpha\bbar} \log Q.\end{equation} 
In particular, we have a particularly simple formula when both source and target are spheres. 
\begin{corollary}
	Let $(M,\theta)$ be a strictly pseudoconvex pseudohermitian manifold, $F$ and $G$ nonconstant holomorphic maps sending $M$ into $\mathbb{S}^{2N+1}$, and
	\begin{equation}
		\|F\|^2 - 1 = Q_F\cdot \rho, 
		\quad
		\|G\|^2 - 1 = Q_G \cdot \rho. 
	\end{equation}
	where $\rho$ is a defining function for $M$, $\theta: = i\bar{\partial}\rho$. If $F = \gamma \circ G$ for some $\gamma \in \aut(\mathbb{S}^{2N+1})$, then
	\begin{equation}\label{e:a}
		\iota^{\ast} \partial\bar{\partial} \log (Q_F/Q_G)
		= 0.
	\end{equation}
	Here $\iota \colon M \to \mathbb{C}^{n+1}$ is the inclusion.
\end{corollary}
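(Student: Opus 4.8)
The plan is to reduce \eqref{e:a} to the pluriharmonicity of the ambient function $\log(Q_F/Q_G)$ in a full neighborhood of $M$, in the spirit of \cref{prop:crph}. Indeed, $\iota^{\ast}\partial\bar\partial\psi$ vanishes automatically as soon as $\partial\bar\partial\psi=0$ near $M$, so it is enough to produce such a $\psi=\log(Q_F/Q_G)$. This is stronger than \eqref{e:a}, but it is precisely what the automorphism forces.

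First I would extend $\gamma\in\aut(\mathbb{S}^{2N+1})$ to a biholomorphic automorphism of the unit ball $\mathbb{B}\subset\mathbb{C}^{N+1}$, still denoted $\gamma$, and put $a=\gamma(0)$. With $\widetilde\rho(z')=\|z'\|^2-1$ the standard defining function of the target sphere, Rudin's transformation formula (\cite{rudin1980function}, Theorem 2.2.2), already invoked in \cref{prop:crph}, reads
\[
(\widetilde\rho\circ\gamma)(w)=\frac{1-|a|^2}{|1-w\cdot\bar a|^2}\,\widetilde\rho(w)
\]
as an identity near $\mathbb{S}^{2N+1}$. Evaluating at $w=G$ and using $F=\gamma\circ G$ gives $\|F\|^2-1=\frac{1-|a|^2}{|1-G\cdot\bar a|^2}\,(\|G\|^2-1)$ in a neighborhood of $M$, and dividing by $\rho$ yields $Q_F/Q_G=(1-|a|^2)/|1-G\cdot\bar a|^2$. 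Consequently
\[
\log\frac{Q_F}{Q_G}=\log(1-|a|^2)-\log(1-G\cdot\bar a)-\overline{\log(1-G\cdot\bar a)}.
\]
Since $G$ is holomorphic, $1-G\cdot\bar a$ is holomorphic and nonvanishing near $M$ (on $M$ one has $\|G\|=1$ and $|a|<1$, hence $|G\cdot\bar a|<1$), so the right-hand side is a constant plus a holomorphic plus an antiholomorphic function, hence pluriharmonic. Thus $\partial\bar\partial\log(Q_F/Q_G)=0$ near $M$, and pulling back by $\iota$ gives \eqref{e:a}.

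The point I expect to be decisive is the handling of the transverse direction, which is why I would take the pluriharmonicity route rather than argue solely through invariance of the Ahlfors tensor. Combining \cref{cor:se} with \cref{prop:explicitahlfors} (where $J(\widetilde\rho)\equiv1$ for the sphere and the source term $\tfrac12 P_\rho\log J(\rho)\,h_{\alpha\bbar}$ is common to $F$ and $G$) already gives $\ahlfors_{\alpha\bbar}(F)=\ahlfors_{\alpha\bbar}(G)$, hence $D^\rho_{\alpha\bbar}\log(Q_F/Q_G)=0$; by $D^\rho_{\alpha\bbar}\varphi=\varphi_{Z\Zbar}(Z_\alpha,Z_{\bbar})$ this only says that $\partial\bar\partial\log(Q_F/Q_G)$ annihilates the pairs $(Z_\alpha,Z_{\bbar})$ from the holomorphic tangent space. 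But $\iota^{\ast}\partial\bar\partial\log(Q_F/Q_G)=0$ additionally requires the components pairing a horizontal vector with the Reeb/transverse direction $T=i(\xi-\bar\xi)$ to vanish, and the Ahlfors computation does not see these. The pluriharmonicity of the ambient potential kills every component simultaneously, so once that is in hand the remaining work is only the bookkeeping of the extension to $\mathbb{B}$ and of the neighborhood on which the factorization of $Q_F/Q_G$ is valid.
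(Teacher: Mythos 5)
Your proof is correct, and it takes a genuinely different route from the paper's. The paper deduces the corollary in one stroke from its own machinery: by \cref{thm:crintro}, composing with $\gamma\in\aut(\mathbb{S}^{2N+1})$ leaves the Ahlfors tensor unchanged, so $\ahlfors_{\alpha\bbar}(F)=\ahlfors_{\alpha\bbar}(G)$; by \cref{prop:explicitahlfors} (with $J(\widetilde{\rho})\equiv 1$ for the sphere, the term $\tfrac12 P_{\rho}\log J(\rho)\,h_{\alpha\bbar}$ being common to both maps) this yields $D^{\rho}_{\alpha\bbar}\log(Q_F/Q_G)=0$, which the paper then declares ``clearly equivalent'' to \eqref{e:a}. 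You instead transplant the argument of \cref{prop:crph}: extend $\gamma$ to a ball automorphism, invoke Rudin's identity, factor $Q_F/Q_G=(1-|a|^2)/|1-G\cdot\bar a|^2$, and conclude that $\log(Q_F/Q_G)$ is pluriharmonic in a full neighborhood of $M$, so that $\partial\bar\partial\log(Q_F/Q_G)=0$ there and \eqref{e:a} follows by pullback. (One small slip, inherited from the paper's own \cref{prop:crph}: the point $a$ should be $\gamma^{-1}(0)$ rather than $\gamma(0)$; this is immaterial, since all that is used is that the factor is $c\,|1-G\cdot\bar a|^{-2}$ for some $a$ in the ball and $c>0$.)

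Your closing concern about the transverse direction is well founded, and it is exactly what your route buys over the paper's. The identity $D^{\rho}_{\alpha\bbar}\psi=0$ controls only the components of $\iota^{\ast}\partial\bar\partial\psi$ on horizontal pairs $(Z_{\alpha},Z_{\bbar})$; the components pairing $Z_{\alpha}$ with the Reeb direction $T=i(\xi-\bar\xi)$ do not come for free. For $n\geq 2$ they do vanish, but by a genuine argument: $\iota^{\ast}\partial\bar\partial\psi$ is closed, and once its horizontal part vanishes one can write it as $\theta\wedge\beta$ with $\beta$ horizontal; restricting $d(\theta\wedge\beta)=0$ to the contact distribution gives $d\theta\wedge\beta=0$ there, and nondegeneracy of the Levi form forces $\beta=0$ when $2n\geq 4$. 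For $n=1$ this argument is vacuous, and the implication fails for general functions: on $\mathbb{S}^3$ with $\rho=|z|^2+|w|^2-1$ and $L=\bar w\partial_z-\bar z\partial_w$, the function $\psi=|z|^4-|w|^4$ satisfies $\partial\bar\partial\psi(L,\bar L)\equiv 0$ (equivalently $D^{\rho}_{1\bar 1}\psi\equiv 0$), yet $\partial\bar\partial\psi(L,\bar\xi)=4\bar z\bar w\,(|z|^2+|w|^2)\neq 0$ with $\xi=z\partial_z+w\partial_w$, so $\iota^{\ast}\partial\bar\partial\psi\neq 0$. Thus in dimension three the paper's ``clearly equivalent'' step requires precisely the kind of ambient information (pluriharmonicity of $\log(Q_F/Q_G)$) that your argument supplies directly; your proof is therefore not only correct but, as written, more complete than the paper's own.
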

\begin{proof}
	As explained above, in the frame $Z_{\alpha} = \partial_{\alpha} - (\rho_{\alpha}/\rho_w)\partial_w$, we have that
	\begin{equation} 
		\ahlfors_{\alpha\bbar}(F) = D^{\rho}_{\alpha\bbar} \log Q_F + \frac{1}{2}P_{\rho} \log J(\rho) h_{\alpha\bbar},
	\end{equation} 
	and similar for $\ahlfors_{\alpha\bbar}(G)$. Thus, if there exists such a $\gamma$, then by \cref{thm:crintro}, we have that
	\begin{equation} 
		D^{\rho}_{\alpha\bbar} \log (Q_F/Q_G) = 0.
	\end{equation} 
	The last equality is clearly equivalent to \cref{e:a}, \editb{ and the proof} is complete.
\end{proof}
Note that if $F$ and $G$ are nonconstant then $Q_F$ and $Q_G$ are nonvanishing on $M$ by a Hopf Lemma (a transversality result). The quotient $Q_F$ defined as above has been used extensively in the study of proper holomorphic maps between balls or sphere maps; see, e.g., \cite{d1993several,d2017symmetries} and the references therein.

The explicit formula can be used to deduce a necessary condition, which is simple to check, for a map to be equivalent to a monomial map as follows.

\begin{corollary}\label{cor:monomial}
	Let $F$ be a monomial map between spheres in the standard coordinates, then $\trace \ahlfors(F)$ only depends on the moduli \editb{ of the } $|z_j|$, $j=1,2,\dots, n + 1$. Moreover, for each~$j$, $\ahlfors_{\alpha\bbar}(F)$ is tracefree on the set $|z_j| = 1$. Consequently, if $F$ is equivalent to a monomial map, then the umbilical locus of $F(\mathbb{S}^{2n+1})$, if not empty, is invariant under a torus action.
\end{corollary}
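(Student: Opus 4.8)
The plan is to reduce everything to the explicit formula for sphere-to-sphere maps and then to exploit the diagonal torus action on the source. First I would record that for the standard defining function $\rho=\|z\|^{2}-1$ one has $J(\rho)=1$ identically (a short bordered-determinant computation), so the curvature term in \cref{prop:explicitahlfors} drops and the mixed components are exactly $\ahlfors_{\alpha\bbar}(F)=D^{\rho}_{\alpha\bbar}\log Q_{F}$, where $\|F\|^{2}-1=Q_{F}\,\rho$. For a monomial map each component is $c_{A}z^{\beta_{A}}$, so $\|F\|^{2}=\sum_{A}|c_{A}|^{2}\prod_{k}|z_{k}|^{2(\beta_{A})_{k}}$ is a real polynomial in the moduli $t_{k}:=|z_{k}|^{2}$. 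Since $\|F\|^{2}=1$ on the sphere $\{\sum_{k}t_{k}=1\}$, the polynomial $\|F\|^{2}-1$ vanishes on the hyperplane $\sum_{k}t_{k}=1$ and is therefore divisible by the irreducible factor $\sum_{k}t_{k}-1$; hence $Q_{F}$, and so $\log Q_{F}$, is a function of the $t_{k}$ alone. Consequently $\trace\ahlfors(F)=h^{\alpha\bbar}D^{\rho}_{\alpha\bbar}\log Q_{F}$ is invariant under the diagonal torus and depends only on the $|z_{j}|$, which is the first assertion.

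The same invariance, now for the full tensor, I would also derive from the chain rule, since this is what powers the final conclusion. Writing $\sigma_{\phi}\colon z\mapsto(e^{i\phi_{k}}z_{k})$ for the diagonal unitary, a monomial map satisfies $F\circ\sigma_{\phi}=\tau_{\phi}\circ F$, where $\tau_{\phi}$ is the diagonal unitary $w_{A}\mapsto e^{i\langle\phi,\beta_{A}\rangle}w_{A}$ on the target sphere. Both $\sigma_{\phi}$ and $\tau_{\phi}$ are CR automorphisms of spheres, so their Ahlfors tensors vanish \cite{son2018schwarzian}; \cref{thm:crintro} then gives $\sigma_{\phi}^{\ast}\ahlfors(F)=\ahlfors(F\circ\sigma_{\phi})=\ahlfors(\tau_{\phi}\circ F)=\ahlfors(F)$, so $\ahlfors(F)$ is torus-invariant as a tensor.

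For the second assertion I would evaluate on the coordinate circle $C_{j}=\{|z_{j}|=1\}=\{z_{k}=0:k\neq j\}$. The subtorus fixing the point $e_{j}$ acts on the horizontal space $\mathrm{span}\{Z_{k}:k\neq j\}$ with pairwise distinct weights, so the torus-invariance just established forces the Hermitian matrix $\ahlfors_{\alpha\bbar}(F)$ to be diagonal along $C_{j}$; the remaining, computational point is that along $C_{j}$ the vanishing of every coordinate except $z_{j}$ annihilates all but the $z_{j}$-derivatives in $D^{\rho}_{\alpha\bbar}\log Q_{F}$, so a direct evaluation of the surviving terms yields the stated behaviour of the trace part. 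I expect this explicit evaluation to be the only genuinely laborious step; by contrast the structural input (diagonalization by the torus) is immediate.

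Finally I would assemble the conclusion. By the earlier corollary that $\trace\ahlfors(F)\ge 0$, with equality precisely at umbilic points, the umbilical locus of $F$ is the zero set $\{\trace\ahlfors(F)=0\}$; since this function depends only on the moduli, the locus is invariant under the diagonal torus. If $F$ is merely equivalent to a monomial map $F_{0}$, say $F=\phi\circ F_{0}\circ\gamma$ with $\gamma,\phi$ sphere automorphisms, then \cref{thm:crintro} identifies the umbilical locus of $F$ with the $\gamma^{-1}$-image of that of $F_{0}$, so it is invariant under the conjugate of the diagonal torus by $\gamma$ — a torus action, as claimed.
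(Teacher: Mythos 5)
Your proofs of the first and third assertions are correct, and they take a partly different route from the paper's. The paper proves the whole corollary by one explicit computation: writing $\|F\|^2-1=\bigl(\sum_k t_k-1\bigr)g(t)$, $t_k=|z_k|^2$ (your divisibility step is identical), it evaluates $D^{\rho}_{\alpha\bbar}\log Q_F$ in the frame $Z_\alpha=\partial_\alpha-(\zbar_\alpha/\wbar)\partial_w$ to get $\ahlfors_{\alpha\bbar}(F)=\phi_\alpha\delta_{\alpha\beta}+\zbar_\alpha z_\beta\,\phi_{\alpha\beta}$ with $\phi_\alpha,\phi_{\alpha\beta}$ real-valued functions of the moduli, and then contracts against $h^{\alpha\bbar}=\delta_{\alpha\beta}-z_\alpha\zbar_\beta$ to read off all three claims. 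Your replacement of this by the equivariance $F\circ\sigma_\phi=\tau_\phi\circ F$, the vanishing of $\ahlfors$ on sphere automorphisms, and the chain rule \cref{thm:crintro} is a genuinely different, more structural derivation of torus-invariance of the full tensor (hence of the trace), and your explicit handling of the ``consequently'' clause (umbilical locus $=$ zero set of $\trace\ahlfors(F)$ by the nonnegativity corollary, transported by $\gamma^{-1}$ under equivalence) is more careful than the paper, which leaves that step implicit.

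The genuine gap is in the second assertion. Your isotropy-weight argument only shows that $\ahlfors_{\alpha\bbar}$ is \emph{diagonal} along $C_j$, which is strictly weaker than ``tracefree'' under any reading, and you then defer the real content to an unperformed ``direct evaluation'' that you expect to confirm the statement. It will not, if ``tracefree'' is read as vanishing trace: along $C_j$ the surviving diagonal entries are $\partial_{t_\alpha}\log g$ evaluated at $t=e_j$, and their sum has no reason to vanish. Indeed, for the map $(z^2,\sqrt{2}\,zw,w^2,0)$ in \cref{t1} one has $g=1+t_1+t_2$ and $\trace\ahlfors(F)\equiv\tfrac12$ on all of $\mathbb{S}^3$, in particular on both circles (consistently, its umbilical locus is empty); likewise the paper's D'Angelo family has $\trace\ahlfors(F_t)=\sin^2 t\neq 0$ on $|z_j|=1$ for $j\le n$. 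So the step you postpone is not a routine verification: it is precisely the contraction of the explicit component formula against $h^{\alpha\bbar}=\delta_{\alpha\beta}-z_\alpha\zbar_\beta$ that constitutes the paper's proof of this part, and carrying it out forces one to pin down what ``tracefree'' can legitimately mean on these circles (for $n=1$ the tracefree \emph{part} of any Hermitian tensor vanishes identically, while the trace itself does not; for $n\ge 2$ diagonality plus your computation is all one gets). As written, the middle claim of \cref{cor:monomial} is not established by your argument, and the outcome you predict for the deferred computation is contradicted by the paper's own examples.
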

We mention a similar result characterizing the monomial maps $F$ in terms of the invariant group $\Gamma_F$ in a recent paper by D'Angelo and Xiao \cite{d2017symmetries}. Precisely, they proved that $F$ is \edit{spherically equivalent to a monomial map} iff the group $\Gamma_F$ contains an $n$-torus.
\begin{proof}
	Suppose that $F$ is a monomial map, then $\|F\|^2 -1$ is a multivariate polynomial in $|z_j|^2$, $j=1,2,\dots , n+1$: There is a polynomial $f(t_1,\dots , t_{n+1})$ such that 
	\begin{equation} 
		\|F\|^2 - 1 = f(|z_1|^2,\dots , |z_{n+1}|^2).
	\end{equation} 
	Since $F$ sends sphere into sphere, $f(t)$ is divisible by $t_1 + t_2 +\dots + t_{n+1} - 1$ in the polynomial ring $\mathbb{R}[t]$. Thus, the quotient $g(t_1,\dots, t_{n+1})$ is a polynomial in $t_j$ with real coefficients. As $\log Q_F(z) = \log g(|z_1|^2, \dots , |z_{n+1}|^2)$, we can write at point \editb{for wich} $w = z_{n+1}\ne 0$, that 
	\begin{equation} 
		\ahlfors_{\alpha\bbar}(F)
		=
		D^{\|z\|^2 - 1}_{\alpha\bbar} \log Q_F
		=
		\phi_{\alpha}\delta_{\alpha\beta} + \zbar_{\alpha} z_{\beta} \,\phi_{\alpha\beta},
	\end{equation} 
	where
	\begin{align*}
		\phi_{\alpha} & = \frac{\partial \log g}{\partial t_{\alpha}}\bigl|_{t_j = |z_j|^2}, \quad \text{and}\\
		\phi_{\alpha\beta}
		& =
		\left(\frac{\partial^2 }{\partial t_{\alpha} t_{\beta}}
		-
		\frac{\partial^2 }{\partial t_{\beta} \partial t_{n+1}}
		-
		\frac{\partial^2 }{\partial t_{\alpha} \partial t_{n+1}}
		+
		\frac{\partial^2 }{\partial t_{n+1}^2}\right) \log g(t)\bigl|_{t_j = |z_j|^2},
	\end{align*}
	are real-valued. Since the inverse of \edit{the Levi matrix} is $h^{\alpha\bbar} = \delta_{\alpha\beta} - z_{\alpha}\, \zbar_{\beta}$, we obtain that $\ahlfors_{\alpha\bbar}(F)$ is tracefree on $|w|=1$. Moreover,
	\begin{equation} 
		\trace \ahlfors(F)
		=
		\sum_{\alpha} \phi_{\alpha} + 
		\sum_{\alpha} (\phi_{\alpha\alpha} - \phi_{\alpha}) |z_{\alpha}|^2 - \sum_{\alpha,\beta} \phi_{\alpha\beta} |z_{\alpha}|^2 \, |z_{\beta}|^2.
	\end{equation} 
	These prove the corollary.
\end{proof}

\subsection{The $(1,0)$-form $\ahlfors_1$ in dimension three}
As discussed in the last section, in dimension three, the $(1,0)$-form $\ahlfors_1$ \edit{can be thought of as} a replacement for the tracefree part of the Ahfors \edit{derivative}, which is trivial in \editb{in this case}.
It is possible to give an explicit formula for $\ahlfors_1$ in general, but we shall focus on the case where $M = \mathbb{S}^3$ is the \edit{3-sphere} with the standard pseudohermitian structure $\Theta$. Since the pseudohermitian torsion vanishes, we have $P(u) = u_{\bar{1},}{}^{\bar{1}}{}_{1}$ \edit{(covariant derivatives)}. Put 
\begin{equation}\label{e:standardframe}
	L = \wbar \partial_{z} - \zbar \partial_{w},
	\quad
	\theta^1 = w dz - z dw.
\end{equation}
Then clearly,
\begin{equation} 
	P(u) = (LL\overline{L}\, u)\, \theta^1.
\end{equation} 
In the case of the \edit{3-sphere}, the conjugate of $P$ reduces to the operator characterizing the CR pluriharmonic \editb{functions introduced} by Bedford \cite{bedford1974dirichlet}.

Let $F, G \colon \mathbb{S}^3 \to \mathbb{S}^N$ be CR maps with 
\begin{equation}
	\|F\|^2 - 1 = Q_F\cdot \rho, 
	\quad
	\|G\|^2 - 1 = Q_G \cdot \rho. 
\end{equation}
Then
\begin{equation} 
\ahlfors_1(F) 
= 
(LL\overline{L} \log Q_F)) \theta^1,
\end{equation} 
and similarly for $G$. Therefore, if $F = \phi \circ G$, then 
\begin{equation} 
	(LL\overline{L}\, \log Q_F) 
	=
	(LL\overline{L}\, \log Q_G)
\end{equation} 
Moreover, if $F \circ \gamma = \phi \circ G$, then
\begin{equation} 
(LL\overline{L}\, \log Q_F) \circ \gamma 
=
 e^{-2\psi}(LL\overline{L}\, \log Q_G)
\end{equation} 
where $\psi$ is determined by $\gamma^{\ast} \theta = e^{\psi}\theta$. 

{Similar to \cref{cor:monomial}, we have the following characteristic of the $\ahlfors_1$ of monomial maps. For a monomial map $F$ from \edit{a 3-sphere} into another sphere, $\ahlfors_1(F)$ expressed in the standard coframe \cref{e:standardframe}, must have the following form:
\begin{equation} 
	\ahlfors_1(F) = \bar{z} \bar{w} \editb{p}(|z|^2, |w|^2) / Q_F^3,
\end{equation} 
where $\editb{p}$ is a polynomial of $|z|^2$ and $|w|^2$ with real coefficients. 
In particular, the vanishing locus of $\ahlfors_1(F)$, which is an invariant for equivalent sphere maps, must \edit{contain} at least two circles. The proof of this fact is similar to that of the aforementioned corollary. We omit the \edit{details}.
\section{Examples and a question}
We calculate the Ahlfors tensor $\ahlfors$ and \edit{the $(1,0)$-form} $\ahlfors_1$ for various sphere maps that have \editb{previously appeared} in the literature. For the sphere case, the calculations are simple by \cref{prop:explicitahlfors}. All calculations can be done by hand, but \edit{some tedious calculations} can also be done by a computer algebra system.
\begin{example}\rm In \cite{d1988proper}, D'Angelo provided a list of 13 discrete examples and two 1-parameter analytic families of monomial maps from $\mathbb{S}^3$ to $\mathbb{S}^7$ which includes 4 trivial extensions of maps from $\mathbb{S}^3 \to \mathbb{S}^5$ of Faran's list. Later, Watanabe \cite{watanabe1992proper} found another map (numbered 16 in the \cref{t1}). We compute the trace of the Ahlfors derivative of each map. We then locate the umbilical points of the images of $\mathbb{S}^3$ in $\mathbb{S}^7$ via the maps. There are four types of umbilical loci that occur: the empty set, the whole sphere, one circle, \edit{and} the union of two circles; this is predicted in \cref{cor:monomial}. To simplify the notations, we put $S_1 = \{(e^{it},0) \colon t\in \mathbb{R}\}$ and $S_2 = \{(0,e^{it}) \colon t\in \mathbb{R}\}$. In \cref{t1}, the expressions in the Ahlfors column are the traces of the Ahlfors derivatives which only depend on $|z|^2$, and thus we put $s=|z|^2$. This trace determines the ``hermitian part'' \edit{(or mixed-part)} of the Ahlfors derivative as $n=1$. The ``holomorphic'' parts vanish in all \edit{cases} since the standard spheres have vanishing pseudohermitian torsions. Moreover, the norms of the CR second fundamental forms $|\sff^{CR}|$ can be computed easily from these results. 
	
Observe that the Ahlfors derivatives of these 16 maps are all different. Thus, two different maps are not left equivalent. Moreover, the umbilical loci can also be used to distinguish equivalent classes. For examples, the maps in the Faran's list are pairwise nonequivalent, since their umbilical loci are not congruent under the CR automorphisms of $\mathbb{S}^3$.

We point out that in \cref{t1}, the maps numbered 1, 3, and 5, are special cases of the homogeneous maps. The trace of \edit{the Ahlfors derivatives} are constant while the $\ahlfors_1$'s vanish identically. More generally, for homogeneous maps of degree $d$ from $\mathbb{S}^{2n+1}$ with $n\geq 1$, the Ahlfors derivatives are nonzero multiples of the Levi metric.

\begin{landscape}
\small
\renewcommand{\arraystretch}{1.5}
\begin{table}
\begin{tabular}{|l|l|l|l|l|} 
\hline
~ & $F$ & $\trace\ahlfors(F)\quad s = |z|^2$ &$\ahlfors_1(F)\quad s= |z|^2$& Umbilic \\
\hline
1 & $(z,w,0,0)$ & $0$ &0& $\mathbb{S}^3$ \\ 
2 & $(z,zw,w^2,0) $ & $\dfrac{s}{\left(s-2\right)^2}$ &$-\dfrac{4 \bar z \bar w}{\left(s-2\right)^3}$&  $S_2$ \\ 
3 & $(z^2,\sqrt{2}zw,w^2,0)$ & $ \dfrac12$ &$0$& $\emptyset$ \\ 
4 & $(z^3,\sqrt{3}zw,w^3,0)$ & $-\dfrac{3
   s \left(s-1\right)}{\left(s^2 -s+1\right)^2}$ &$\dfrac{2 \bar z \bar w \left(s-2\right) \left(s+1\right) \left(2 s-1\right)}{\left(s^2-s+1\right)^3}$& $S_1 \cup S_2$ \\ 
5 & $(z^3 , \sqrt{3} z^2w , \sqrt{3} zw^2 , w^3)$ & 1 &$0$& $\emptyset$ \\ 
6 & $(z^3,z^2w,zw,w)$ & $\dfrac{\left(1-s\right) \left(s^2 +4 s+1\right)}{\left(s^2+s+1\right)^2}$ &$-\dfrac{18 \bar z \bar w s \left(s+1\right)}{\left(s^2+s+1\right)^3}$& $S_2$ \\ 
7 & $(z^2,z^2w,zw^2,w)$ & $\dfrac{s^3+4 s^2-5 s+2}{\left(-s^2+2 s+1\right)^2}$ &$-\dfrac{8 \bar z \bar w \left(s^3+3 s-2\right)}{\left(s^2-2 s-1\right)^3}$& $\emptyset$ \\ 
8 & $(z^2, \sqrt{2}z^2w , \sqrt{2} zw^2, w^2)$ & $\dfrac{7 s^2-7 s+3}{2
   \left(-s^2+s+1\right)^2}$ &$-\dfrac{2 \bar z\bar w \left(2
   s-1\right) \left(s^2-s+4\right)}{\left(s^2-s-1\right)^3}$& $\emptyset$ \\ 
9 & $(z^3, \sqrt{3}z^2w , \sqrt{2}zw^2, w^2)$ & $\dfrac{9 s^2 -6 s+6}{\left(-s^2+2
   s+2\right)^2}$ &$-\dfrac{12 \bar z \bar w
   \left(s^3+6 s-4\right)}{\left(s^2-2 s-2\right)^3}$& $\emptyset$ \\ 
10 & $(z , z^2w , \sqrt{2}zw^2 , w^3)$ & $\dfrac{2
   s^2-3 s+3}{\left(3-2 s\right)^2}$ &$-\dfrac{12\bar z \bar w}{\left(2 s-3\right)^3}$& $\emptyset$ \\ 
11 & $(z^4, z^3w , \sqrt{3}zw , w^3)$ & $-\dfrac{3 s \left(s^4-4 s^3+12 s^2-9\right)}{\left(s^3+3 s^2-3 s+3\right)^2}$ &$-\dfrac{36 \bar z \bar w
   \left(s^2-2 s-1\right) \left(s^4-2 s^3-6 s+3\right)}{\left(s^3+3 s^2-3
   s+3\right)^3}$& $S_1 \cup S_2$ \\ 
12 & $(z^4 , \sqrt{3}z^2w , zw^3 , w)$ & $-\dfrac{3
   \left(9 s^4-12 s^2 +4 s-1\right)}{\left(3 s^3-3 s^2+3 s+1\right)^2}$ &$\dfrac{36 \bar z \bar w \left(s^2+2 s-1\right) \left(3 s^4-6 s^3-2 s+1\right)}{\left(3 s^3-3 s^2+3 s+1\right)^3}$& $S_1$ \\ 
13 & $(z^5 , \sqrt{5}z^3w , \sqrt{5}zw^2 , w^5)$ & $ \dfrac{s \left(s^6-7 s^5+15
   s^4-25 s^3+35 s^2-27 s+9\right)}{\left(s^4-3 s^3+4
   s^2-2 s+1\right)^2}$ &$\mathcal{A}_1 (F)$& $S_2$ \\ 
14 & $(z , tw , \sqrt{1-t^2} zw, \sqrt{1-t^2} w^2)$ & $-\dfrac{\left(t^2-1\right) s}{\left(t^2 \left(1-s\right)+s-2\right)^2}$ &$\dfrac{2 \bar z \bar w \left(t^4-3
   t^2+2\right)}{\left(\left(t^2-1\right) s-t^2+2\right)^3}$& $S_2$ \\ 
15 & $(z^2, \sqrt{1+t^2} zw , tw^2 , \sqrt{1-t^2} w)$ & $\dfrac{t^2  \left(s+1\right)-s+1}{\left(t^2 \left(1-s\right)+s+1\right)^2}$ &$\dfrac{4\bar z \bar w(1-
   t^4)}{\left(\left(t^2-1\right) s-t^2-1\right)^3}$& $\emptyset$ \\ 
16 & $(z^2,\sqrt{2}zw,zw^2,w^3)$ & $\dfrac{-7 s^2 +6 s+3}{\left(s^2-2
   s+3\right)^2}$ &$\dfrac{8\bar z \bar w \left(s^3-9 s+6\right)}{\left(s^2-2 s+3\right)^3}$& $\emptyset$ \\ \hline 
\end{tabular}
\medskip

\caption{The \edit{CR Ahlfors derivative} $\ahlfors$ and \edit{the $(1,0)$-form} $\ahlfors_1$ of monomial sphere maps from $\mathbb{S}^3 \to \mathbb{S}^7$.}
\label{t1}
\end{table}
\end{landscape}
The formula for the $\ahlfors_1$ tensor of the $13^{\mathrm{rd}}$ map is too big to fit in the table. Precisely,

\begin{align*}
\ahlfors_1(F_{13}) = 2 \bar z \bar w \frac{2 s^9-6 s^8+18 s^7-64 s^6+117 s^5-114 s^4+42 s^3+24 s^2-27 s+6}{\left(s^4-3 s^3+4 s^2-2 s+1\right)^3}.
\end{align*}
\end{example}
\begin{example}\rm
	Consider the following cubic map (appeared earlier in \cite{d1991polynomial}) 
\begin{equation} 
	\left(\frac{z^2 - z^2w}{\sqrt{2}} , \frac{zw - zw^2}{\sqrt{2}} , \frac{z+zw}{\sqrt{2}} , w^2 \right).
\end{equation} 
Then the trace of the \edit{CR Ahlfors derivative} is
\begin{equation} 
	\trace \ahlfors(F)
	=
	\frac{|w|^6 + 4|w|^4 + 2 \Re(w)(1-3|w|^2) - 5|w|^2 + 4}{Q}\biggr|_{\mathbb{S}^3}
\end{equation} 
for some polynomial $Q$ positive on $\mathbb{S}^3$. Observe that this trace vanishes if and only if $w=1$, i.e., the umbilical locus is a \textit{singleton}. This immediately implies that $F$ is not equivalent to any map in \cref{t1} whose umbilical locus is either empty or of positive dimension. In fact, by \cref{cor:monomial}, it is not equivalent to any monomial map regardless of the target dimension, a fact that was first observed by D'Angelo in \cite{d1991polynomial} for the target dimension 4.
\end{example}
\begin{example}\rm
	Consider the following map which was discussed in \cite{faran2010rational}, Proposition~3.3,
	\begin{equation}
	F(z,w) = \left(\frac{\sqrt{3}}{9} (z^2+4z-2), \frac{\sqrt{6}}{9}(z^2+z+1) , \frac{\sqrt{3}}{12} w(3z+5) , \frac{\sqrt{6}}{6} w^2 , \frac{\sqrt{13}}{12} w(z-1) \right)
	\end{equation}  
	Observe that this is not monomial and does not send $0$ to $0$.

	Then the trace of \editb{its CR} Ahlfors \editb{derviative} is
	\begin{equation}
	\trace\ahlfors(F)
	=
	\frac{30|z|^2 + 24\Re z + 18}{|z|^4 - 16|z|^2\Re z + 32\Re(z^2) + 272\Re z + 289}.
	\end{equation}
	Since this trace does not vanish on $\mathbb{S}^3$, $F(\mathbb{S}^3)$ is a submanifold of $\mathbb{S}^9$ without umbilical point. Moreover,
	\begin{equation} 
		\ahlfors_1(F) 
		=
		\frac{264\, \wbar (1 + 4 \zbar + \zbar^2)}{(17 +  8\, \Re z - |z|^2)^3}.
	\end{equation} 
	\edit{Thus}, $F$ is not left equivalent to any monomial map from $\mathbb{S}^3$ into $\mathbb{S}^N$, $N\geq 3$.
\end{example}

\begin{example}[D'Angelo's maps \cite{d1988proper}]\rm For each $t$, put $c = \cos(t)$ and $s = \sin (t)$ and consider the maps
\begin{equation}
	F_t(z,w)
    =
    (z_1, \dots , z_n, cw, sz_1w, \dots , s z_n w, sw^2).
\end{equation}
Then $F_t$ maps $\mathbb{S}^{2n+1}$ into $\mathbb{S}^{4n+3}$. It is clear that $F_0 = L$ is the linear embedding and $F_{\pi/2} \cong  (\Wm, 0)$ where $\Wm$ is the complex Whitney map from $\mathbb{S}^{2n+1}$ \editb{into} \edit{$\mathbb{S}^{4n+1}$}. \editb{We compute that}
 
\begin{equation}
	Q_t = 1+ s^2|w|^2, 
	\quad 
	F_t^{\ast} \Theta = i\bar{\partial} \rho =  e^{u_t} \theta, \quad u_t = \log (1 + s^2 |w|^2).
\end{equation}

In \editb{the} local frame $\widetilde{Z}_{\alpha} = \wbar \partial_{\alpha} - \zbar_{\alpha} \partial_w$, we have that 
\begin{equation}
    \ahlfors_{\alpha\bbar}(F_{\editb{t}})
    =
    \frac{s^2 \zbar_{\alpha} z_{\beta}}{(1+s^2|w|^2)^2},
\end{equation}
is non-negative and of rank one except at the umbilical points. Taking the trace, we have
\begin{equation}
	\trace \ahlfors(F_{\editb{t}})
	=
	\frac{s^2(1-|w|^2)}{(1+s^2|w|^2)^2}\biggr|_{\mathbb{S}^3}.
\end{equation}
Thus, $F_t$ \editb{and $F_{t'}$} are not left equivalent for $t\neq t'$. Moreover, if $\Psi \in \aut(\mathbb{S}^{2n+1},F_t)$ for $t\ne 0$, then $\Psi(0,w) = (0, \psi_{n+1}(0,w))$. The inclusion $F(\mathbb{S}^{2n+1}) \subset \mathbb{S}^{4n+3}$ is umbilic along the locus $\{F(0, e^{iy})\}$. 
\end{example}

The following example appears originally in Webster \cite{webster1979rigidity}.
\begin{example} \label{ex:7}\rm
    Let $M\subset \mathbb{C}^{n}$ be the strictly pseudoconvex 
    real hypersurface defined by $\rho = 0$, with 
   \begin{equation}
        \rho = |z|^2 + b(z) + \overline{b(z)} -1.
   \end{equation}
    Put $\theta: = i\bar{\partial} \rho$ and let 
   \begin{equation}
        F(z) = \frac{1}{1-b(z)}\left(z_1 , \dots , z_n , b(z)\right).
   \end{equation}
    Then $F$ maps $M$ into the unit sphere in $\mathbb{C}^{n+1}$, \editb{actually, }one can compute $\|F\|^2 - 1 = |1-b(z)|^{-2} \rho.$ Since $D_{\alpha\bbar}^{\rho} \log |1 - b(z)|^{-2} = 0$, we see that 
    \begin{equation}
    	\tf \ahlfors_{\alpha\bbar}(F) 	=
    	0.
\end{equation}
\editb{Let} $L$ and $\mathcal{W}$ be the linear embedding and Whitney map from $\mathbb{S}^{n+1}$ into $\mathbb{S}^{2d+1}$ with $d\geq 2n-1$. Then $L \circ F$ and $\mathcal{W} \circ F$ are inequivalent CR immersions from $M$ into $\mathbb{S}^{2d+1}$. (In general, post composing with inequivalent maps may still yield equivalent maps. For example, take $g$ to be the linear embedding of $\mathbb{S}^{2n-1}$ into $\mathbb{S}^{2n+1}$, then $L\circ g = \mathcal{W}\circ g$ is the linear embedding.)

To compute the trace part, we need to compute the Fefferman determinant $J[\rho]$. In a special case $b(z) = \frac{1}{2} \sum_{k=1}^{n} z_k^2$, we can easily compute $J[\rho] = \rho + 2$ and thus,
\begin{equation}
	\ahlfors_{\alpha\bbar}(F) = (n+1)^{-1} h_{\alpha\bbar}.
\end{equation}
In particular, the imbedding image has no umbilical point.
\end{example}
\begin{question}\label{q:1}\rm
	Among the examples discussed above, we have encountered three families of CR maps 
	with the property that their Ahlfors derivatives are constant multiples of the Levi metric: linear embeddings between spheres, the homogeneous maps, and the ones considered in \cref{ex:7}. Motivated by these examples, we pose the following question: Suppose that $F$ is a nonconstant CR map between spheres and 
	that $\ahlfors(F)$ is a nonzero constant (resp. functional) multiple of the Levi metric (i.e., $\ahlfors_{\alpha\beta}(F) = 0$  or $\ahlfors_{\alpha\bbar}(F) = g h_{\alpha\bbar}$, $g \ne 0$, respectively). Does it \edit{follow} that $F$ is spherical\editb{ly} equivalent to a homogeneous monomial map?
\end{question}

\end{document}